\newtheorem{thm}{Theorem}[section]
\newtheorem{definition}[thm]{Definition}
\newtheorem{lem}[thm]{Lemma}
\newtheorem{prop}[thm]{Proposition}
\newcommand{\T}{{\mathbb{T}^2}}
\newcommand{\N}{\mathbb{N}}
\newcommand{\R}{\mathbb{R}}
\newcommand{\Z}{\mathbb{Z}}
\newcommand{\PP}{\mathbb{P}}
\newcommand{\E}[1]{\mathbb{E}\left[#1\right]}
\newcommand{\tE}[1]{\tilde{\mathbb{E}}\left[#1\right]}
\newcommand{\hE}[1]{\hat{\mathbb{E}}\left[#1\right]}
\newcommand{\dvg}{\mbox{div}\,}
\theoremstyle{remark}
\newtheorem{rmk}[thm]{Remark}
\numberwithin{equation}{section}
\title[From additive to transport noise]{From additive to transport noise \\ in 2D fluid dynamics}
\author[F. Flandoli]{Franco Flandoli}
  \address{Scuola Normale Superiore, Piazza dei Cavalieri, 7, 56126 Pisa, Italia}
 \email{\href{mailto:franco.flandoli@sns.it}{franco.flandoli@sns.it}}
\author[U. Pappalettera]{Umberto Pappalettera}
  \address{Scuola Normale Superiore, Piazza dei Cavalieri, 7, 56126 Pisa, Italia}
 \email{\href{mailto:umberto.pappalettera@sns.it}{umberto.pappalettera@sns.it}}
\keywords{}
\date\today
\begin{document}

\begin{abstract}
Additive noise in Partial Differential equations, in particular those of fluid mechanics, has relatively natural motivations.
The aim of this work is showing that suitable multiscale arguments lead rigorously, from a model of fluid with additive noise, to transport type noise. 
The arguments apply both to small-scale random perturbations of the fluid acting on a large-scale passive scalar and to the action of the former on the large scales of the fluid itself.
Our approach consists in studying the (stochastic) characteristics associated to small-scale random perturbations of the fluid, here modelled by stochastic 2D Euler equations with additive noise, and their convergence in the infinite scale separation limit. 
\end{abstract}

\maketitle


\section{Introduction} \label{sec:intro}
Let $T > 0$ be fixed. In this work we are concerned with convergence of characteristics associated with stochastic Euler equations in vorticity form on the two-dimensional torus $\T \coloneqq \R^2/(2\pi \Z^2)$:
\begin{align} \label{eq:euler_intro}
d \xi^\epsilon_t + (v^\epsilon_t+u^\epsilon_t) \cdot \nabla \xi^\epsilon_t dt
=
- \epsilon^{-1} \xi^\epsilon_t dt + \epsilon^{-1} \sum_{k \in \N} \varsigma_k dW^k_t,
\quad t \in [0,T],
\end{align}
where $\xi^\epsilon$ is the zero-mean unknown vorticity field, $u^\epsilon$ is the velocity field reconstructed from $\xi^\epsilon$ via the Biot-Savart kernel: $u^\epsilon_t = -\nabla^\perp (-\Delta)^{-1} \xi^\epsilon_t$, $v^\epsilon$ is a divergence-free external field with suitable regularity, $\varsigma_k : \T \to \R$ with zero average for every $k \in \N$, $(W^k)_{k \in \N}$ is a family of i.i.d. Wiener processes defined on a filtered probability space $(\Omega,\mathcal{F}_t,\PP)$, and $\epsilon \ll 1$ is a scaling parameter.  

Equations \eqref{eq:euler_intro} above aim to represent the small-scale component of a two-dimensional incompressible fluid \cite{BoEc12}, with the additive noise and damping on the right-hand-side modelling the influence on the fluid of a possibly irregular boundary or topography.
The choice of the parameter $\epsilon^{-1}$ in front of both noise and damping is appropriate when looking at the system with respect to the point of view of a large-scale observer, see \cite{FlPa21} and \autoref{ssec:motivations} for details. 
In view of this, it makes sense to couple \eqref{eq:euler_intro} with a large-scale scalar dynamics:
\begin{align} \label{eq:large_eps_intro}
d \Xi^\epsilon_t + (v^\epsilon_t + u^\epsilon_t) \cdot \nabla \Xi^\epsilon_t dt
=
\nu \Delta \Xi^\epsilon_t dt
+ q^\epsilon_t dt,
\quad t \in [0,T],
\end{align}
either passive (in which case the external field $v^\epsilon$ should be interpreted as given a priori) or active (in which case the external field $v^\epsilon$ could depend on the large-scale dynamics itself, as for instance in the vorticity formulation of 2D Navier-Stokes equations, where $v^\epsilon_t = -\nabla^\perp (-\Delta)^{-1} \Xi^\epsilon_t$). 

In \eqref{eq:large_eps_intro} above, $\nu\geq 0$ is a fixed parameter that represents molecular diffusivity (passive dynamics) or viscosity (active dynamics), and $q^\epsilon$ is a given source term with suitable integrability. 

Let $(\tilde{\Omega},\tilde{\mathcal{F}}_t,\tilde{\PP})$ be an auxiliary probability space and let $w$ be a standard $\R^2$-valued Wiener process defined on $(\tilde{\Omega},\tilde{\mathcal{F}}_t,\tilde{\PP})$.  
The (stochastic) characteristics $\phi^\epsilon$ associated with problem \eqref{eq:euler_intro} are given by the family of maps $\phi^\epsilon_t:\T \to \T$ satisfying
\begin{align} \label{eq:char_eps_intro}
\phi^\epsilon_t(x)
&=
x
+
\int_0^t
v^\epsilon_s(\phi^\epsilon_s(x)) ds 
+
\int_0^t
u^\epsilon_s(\phi^\epsilon_s(x)) ds +
\sqrt{2\nu} w_t,
\end{align}
where $t \in [0,T]$, $x \in \T$.
Since $v^\epsilon$ and $u^\epsilon$ are divergence-free and have sufficient regularity, the characteristics $\phi^\epsilon$ defined above constitute a \emph{stochastic flow of measure-preserving homeomorphisms}, in the sense of \autoref{def:stoch_flow} below.
The interest in studying the solution of \eqref{eq:char_eps_intro} is motivated by the following representation formula for the solution of \eqref{eq:large_eps_intro}:
\begin{align} \label{eq:repr_large_eps}
\Xi^\epsilon_t
&=
\tE{ \Xi_0 \circ (\phi^\epsilon_t)^{-1} + \int_0^t q^\epsilon_s \circ \phi^\epsilon_s \circ(\phi^\epsilon_t)^{-1}ds},
\end{align} 
where $\tilde{\mathbb{E}}$ is the expectation on $\tilde{\Omega}$ with respect to $\tilde{\PP}$ and we have tacitly assumed that the initial condition $\Xi^\epsilon|_{t=0} = \Xi_0$ is independent of $\epsilon$. See \autoref{def:sol} for more details on the notion of solution adopted in the present paper.

The main purpose of this work - cfr. \autoref{thm:char} - is to investigate conditions allowing to prove convergence in a suitable sense, as $\epsilon \to 0$, of $\phi^\epsilon$ towards the solution of: 
\begin{align} \label{eq:char_intro}
\phi_t(x)
&=
x
+
\int_0^t
v_s(\phi_s(x)) ds 
+
\sum_{k \in \N} \int_0^t \sigma_k(\phi_s(x)) \circ dW^k_s +
\sqrt{2\nu} w_t,
\end{align}
where $\sigma_k = -\nabla^\perp (-\Delta)^{-1} \varsigma_k$ and $v^\epsilon \to v$ in a certain sense.

The notion of convergence $\phi^\epsilon \to \phi$ contained in \autoref{thm:char} permits to prove a notion of weak convergence of the large-scale osservable $\Xi^\epsilon$ given by \eqref{eq:repr_large_eps} towards 
\begin{align} \label{eq:repr_large}
\Xi_t
&=
\tE{ \Xi_0 \circ (\phi_t)^{-1} + \int_0^t q_s \circ \phi_s \circ(\phi_t)^{-1}ds},
\end{align}
that solves the large-scale dynamics with \emph{transport noise}:
\begin{align} \label{eq:large_intro}
d \Xi_t + v_t \cdot \nabla \Xi_t dt
+ \sum_{k \in \N} \sigma_k \cdot \nabla \Xi_t \circ dW^k_t
&=
\nu \Delta \Xi_t dt + q_t dt,
\end{align}
where $v$ is independent of $\Xi$ for passive dynamics, while it could depend of $\Xi$ itself for active dynamics, and $q^\epsilon \to q$ in a sense to be specified later. The precise meaning of weak convergence is made rigorous in \autoref{thm:conv_large} below.

%
%

We think that these results are fundamental for a proper interpretation of transport noise in SPDEs, at least for the two classes considered here. 
Several papers considered transport noise so far, either in passive scalars
(\cite{Dolgo}, \cite{FlaGaleLuoPTRSA}, 
\cite{Galeati}, \cite{Gess}, \cite{LeJan Ray}, \cite{MajdaK},
\cite{Sreen}), passive vector fields (\cite{FlaMaurNek}, \cite{FlaOliv
advection}, \cite{Krause Radler}, \cite{Zeldovich}) and fluid mechanics
equations themselves (\cite{BCF 91 mult noise}, \cite{BCF 92 mult noise},
\cite{BrFlMa16}, \cite{BrSl20+}, \cite{CrFlHo19},
\cite{Cruzeiro Torr}, \cite{DrivasHolm}, \cite{DrivasHolmLehaly},
\cite{FlaGaleLuoJEE}, \cite{FlaGalLuorate}, \cite{FlaPappWater},
\cite{Funaki}, \cite{Holm}, \cite{MiRo04}, \cite{MiRo05}, \cite{Yokohama}).
In terms of consequences of transport noise, among the aforementioned works are proved several results concerning well-posedness, enhanced dissipation and mixing properties of fluid dynamics equations perturbed by transport noise, thus being a good starting point towards a rigorous understanding of turbulence in fluids.
However, unlike the case of additive noise, that is widely accepted as a source of randomness, transport noise needs a more careful justification.
One possible attempt is given by Wong-Zakai approximation results, largely investigated both in and outside the realm of fluid dynamics (\cite{BrCaFl90}, \cite{Gy88}, \cite{Gy89},
\cite{HoLeNi19}, \cite{HoLeNi19+}, \cite{TeZa06}, \cite{Tw93}).

Let us explain what is added to these works by the present paper. Concerning the passive dynamics, several Wong-Zakai type results of convergence to the white noise transport in Stratonovich form have been proved before (see also \cite{Pa21+} for a dissipation enhancement result due to the presence of a Stratonovich-to-It\=o  corrector), but this seems to be the first work where the velocity field approximating the white noise one is the solution of a nonlinear fluid mechanics equation. 
Concerning the active dynamics, the results contained in this paper extend and make more precise our previous work \cite{FlPa21}: i) some details in the proof of \cite[Proposition 4.1]{FlPa21}, which after publication appeared not precise, are fixed here in \autoref{thm:char}; ii) more importantly, the term $u_{t}^{\epsilon}\cdot\nabla\xi_{t}^{\epsilon}$ was
absent in \cite{FlPa21}, which therefore should be interpreted more along the research lines of model reduction, inspired by \cite{MaTiVE01}, instead of multiscale analysis of the full problem.

\subsection{Examples} \label{ssec:ex}
Throughout the paper we keep ourselves in a setting as general as possible, in order to comprehend, in our abstract results, the greatest number of particular cases.
However, our work has been motivated by two main examples:
\begin{itemize}
\item \emph{Advection-diffusion equation}. 
Consider the following system, describing the evolution of the concentration $\rho^\epsilon$ of a passive scalar advected by the Euler flow and subject to the influence of an external source $q^\epsilon$:
\begin{align*}
\begin{cases}
d\rho^\epsilon_t + (v_t + u^\epsilon_t) \cdot \nabla \rho^\epsilon_t dt
=
\nu \Delta \rho^\epsilon_t dt + q^\epsilon_t dt,
\\
d \xi^\epsilon_t + (v_t+u^\epsilon_t) \cdot \nabla \xi^\epsilon_t dt
=
- \epsilon^{-1} \xi^\epsilon_t dt + \epsilon^{-1} \sum_{k \in \N} \varsigma_k dW^k_t,
\\
u^\epsilon_t = -\nabla^\perp(-\Delta)^{-1}\xi^\epsilon_t.
\end{cases}
\end{align*}
We have taken $\nu\geq 0$ and $v^\epsilon=v$, independent of $\epsilon$, since the passive scalar does not affect the external field. 
In this setting, $\rho^\epsilon$ converges towards the solution of the limiting advection-diffusion equation with transport noise:
\begin{align*}
d\rho_t + v_t \cdot \nabla \rho_t dt
+ \sum_{k \in \N} \sigma_k \cdot \nabla \rho_t \circ dW^k_t
&=
\nu \Delta \rho_t dt + q_t dt.
\end{align*}

\item \emph{Navier-Stokes and Euler equations}.
Consider the following system, describing the coupling between large-scale Navier-Stokes ($\nu > 0$) or Euler ($\nu=0$) equations and small-scale stochastic Euler equations:
\begin{align*}
\begin{cases}
d\Xi^\epsilon_t + (v^\epsilon_t + u^\epsilon_t) \cdot \nabla \Xi^\epsilon_t dt
=
\nu \Delta \Xi^\epsilon_t dt + q^\epsilon_t dt,
\\
d \xi^\epsilon_t + (v^\epsilon_t+u^\epsilon_t) \cdot \nabla \xi^\epsilon_t dt
=
- \epsilon^{-1} \xi^\epsilon_t dt + \epsilon^{-1} \sum_{k \in \N} \varsigma_k dW^k_t,
\\
v^\epsilon_t = -\nabla^\perp(-\Delta)^{-1}\Xi^\epsilon_t,
\\
u^\epsilon_t = -\nabla^\perp(-\Delta)^{-1}\xi^\epsilon_t.
\end{cases}
\end{align*}
We take $q^\epsilon$ and $\Xi_0$ with zero spatial average, so that $\Xi^\epsilon$ is zero mean, too.
Notice that in this case the field $v^\epsilon$ is generated by $\Xi^\epsilon$ itself through the Biot-Savart law $v^\epsilon_t = -\nabla^\perp(-\Delta)^{-1}\Xi^\epsilon_t$, in particular $v^\epsilon$ is random. On the other hand, the external source $q^\epsilon$ can be thought as given a priori and deterministic.
In this setting, $\Xi^\epsilon$ converges towards the solution of the limiting Navier-Stokes or Euler equations with transport noise:
\begin{align*}
\begin{cases}
d \Xi_t + v_t \cdot \nabla \Xi_t dt
+ \sum_{k \in \N} \sigma_k \cdot \nabla \Xi_t \circ dW^k_t
=
\nu \Delta \Xi_t dt + q_t dt,
\\
v_t=-\nabla^\perp(-\Delta)^{-1}\Xi_t .
\end{cases}
\end{align*}
It is worth of mention that, also in the limit, the velocity field $v$ is still generated by $\Xi$ through the Biot-Savart law $v_t = -\nabla^\perp(-\Delta)^{-1}\Xi_t$.
\end{itemize} 

\subsection{Motivations} \label{ssec:motivations}
As already mentioned in the Introduction, \eqref{eq:euler_intro} aims to represent the small-scale component of a two-dimensional incompressible fluid, looked at by a large-scale observer.
At large scales the fluid shows a turbulent behaviour, and its statistical properties are well-described by solutions of stochastic equations, although the underlying continuum mechanics equations that govern the evolution of the fluid are deterministic. 

We refer to \cite[Chapter 2]{FlPa21} and reference therein for a complete discussion about the equations under investigation in this paper and the interest for their asymptotical behaviour as $\epsilon \to 0$.

\subsubsection{On the additive noise and damping}
Additive noise in SPDEs is so common that apparently we do not need a justification for introducing it, as we have done in equation \eqref{eq:euler_intro} above. 
However, a short discussion may help to convince ourselves that it is very natural, and moreover to understand that also the
damping term is needed. 

Our opinion, described more extensively in the proposal of \cite[Chapter 1]{Fla libro}, is that an additive noise is a good compromise to keep into account the vortices produced by obstacles and irregularities at the boundary or internal obstacles, which are not explicitly described in the
mathematical formulation, often based on the torus geometry or a domain with smooth boundary. 
Such obstacles introduce vortices, eddies, that could be idealized and described as a jump Markov process $W_{N}(t)$ in the Hilbert space $H$ of $L^{2}(\T)$ vorticity fields on the torus; the fluid equation perturbed by the creation of these new vortices takes a priori the form
\[
\partial_{t}\xi_{t}+(v_{t}+u_{t})\cdot\nabla\xi_{t}=\partial_{t}W_{N}(t)
\]
where $\partial_{t}W_{N}(t)$ is a sum of delta Dirac in time, with the effect that $\xi_{t}$ jumps at those times, namely (if $t_{i}$ denotes one of such times) $\xi_{t_{i}^{+}}$ is equal to $\xi_{t_{i}^{-}}$ plus the created vortex. 
We have indexed $W_{N}(t)$ by $N$ to anticipate that we consider a regime with frequent creation of vortices of
small amplitude. 

Scaling the parameters of $W_{N}(t)$ in the right way, see \cite{Fla libro}, under suitable assumptions of zero average of $W_{N}(t)$ and integrability, $W_{N}(t)$ converges in law to a Brownian motion $W(t)$ in $H$ with a suitable covariance. 
This is our motivation for the equation with
additive noise
\[
d\xi_{t}+(v_{t}+u_{t})\cdot\nabla\xi_{t}dt=dW(t)  .
\]
However, as it is easily seen by It\^{o} formula \cite[Chapter 2]{Fla libro}, such additive noise introduces systematically energy, fact that is not acceptable from the physical viewpoint: the vortices created by obstacles do not increase the energy (at most, some energy is lost in thermal
dissipation at the boundary). 
Therefore some sort of compensation is needed. 
The simplest is to think that the forces which are responsible for the creation of vortices by the obstacles are somewhat similar to a friction. Thus we introduce a
friction term to maintain equilibrium:
\[
d\xi_{t}+(v_{t}+u_{t})\cdot\nabla\xi_{t}dt=-\lambda\xi_{t}dt+dW(t).
\]
This is the origin of the fluid model. 
The particular scaling attributed above to the terms $-\lambda\xi_{t}dt$ and $dW(t)$ is related to
a different argument, which is explained in the next paragraph.

\subsubsection{On the parameter $\epsilon^{-1}$}
An important feature of \eqref{eq:euler_intro} is the presence of the scaling parameter $\epsilon^{-1}$ in front of \emph{both} noise and damping, in contrast to the widely-studied diffusive scaling given by coefficients $\epsilon^{-1}$ in front of the damping and $\epsilon^{-1/2}$ in front of the noise.
Let us recall briefly where this scaling comes from, referring to \cite{FlPa21} for further details.
 
We suppose to have a small time scale $\mathcal{T}_{\text{S}}$, at which we observe the vorticity field $\xi$. At this scale, the small scales evolve according to deterministic equations, and the typical intensity and turnover time of $\xi$ are of order one. 

Let us now take an intermediate point of view on the system, say human-scale, $\mathcal{T}_{\text{M}} \coloneqq \epsilon^{-1} \mathcal{T}_{\text{S}}$. At this scale, fluctuations of $\xi = \xi^\epsilon$ look random and could be well modeled by stochastic equations \eqref{eq:euler_intro}, with the crucial difference of a coefficient $\epsilon^{-1/2}$ in front of the noise rather than $\epsilon^{-1}$.

Only when we look at the system with respect to a large time scale $\mathcal{T}_{\text{L}} \coloneqq \epsilon^{-1} \mathcal{T}_{\text{M}}$ the scaling of \eqref{eq:euler_intro} appears. 
As a result of the theory here developed, under this point of view the small scale fluctuations behave as a white noise of multiplicative type.

We remark that, in our arguments, spatial scaling is less important then temporal scaling. 
As it emerges from computations performed in \cite[Subsection 2.3]{FlPa21}, the spatial scaling only affects the spatial covariance of the noise in \eqref{eq:euler_intro}.
For the sake of concreteness, suppose that $W_{\text{M}}(\tilde{t},\tilde{x})$ is the noise perturbing $\xi^\epsilon$ at intermediate scales and $W_{\text{L}}(t,x)$ is the noise perturbing $\xi^\epsilon$ at large scales, with mesoscopic variables $\tilde{t},\tilde{x}$ related to macroscopic variables $t,x$ by the formulas $\tilde{t} = \epsilon^{-1} t$, $\tilde{x} = \epsilon_X^{-1} x$. Then it holds the equality in law
\begin{align*}
W_{\text{M}}(\tilde{t},\tilde{x})
=
\epsilon^{1/2} W_{\text{L}}\left(t,\epsilon^{-1}_X x\right).
\end{align*}
Moreover, assuming that the elements producing the noise (topography, boundaries \emph{et cetera}) are actually large-scale, we can suppose that the covariance of $W_{\text{M}}$ is slowly-varying with respects to $\tilde{x}$, or equivalently
\begin{align*}
W_{\text{L}}\left(t,\epsilon^{-1}_X x\right)
=
\sum_{k \in \N}
\varsigma_k(x) W^k_t,
\end{align*}
with $\varsigma_k$ and $W^k$ as in \eqref{eq:euler_intro}.

\subsection{Structure of the paper}
In \autoref{sec:notations} we introduce some notation and recall classical results that will be frequently used in the remainder of the paper. This section contains, among others: main properties of the Biot-Savart kernel on the torus $-\nabla^\perp(-\Delta)^{-1}$; a useful Gronwall-type lemma for ODEs with log-Lipschitz drift; notions of solution and well-posedness results for stochastic Euler equations \eqref{eq:euler_intro}, equations of characteristics \eqref{eq:char_eps_intro} and \eqref{eq:char_intro}, and large-scale dynamics \eqref{eq:large_eps_intro} and \eqref{eq:large_intro}.
Also, here we introduce our main working assumptions (A1)-(A7), and in the last part of this section we state our two main results, concerning convergence of characteristics (\autoref{thm:char}) and subsequent convergence of large-scale dynamics (\autoref{thm:conv_large}).

In the first part of \autoref{sec:tech}, we define a linearized version of \eqref{eq:euler_intro}, where we neglect the nonlinear term. This approach is similar to that of \cite{FlPa21}, and the key idea is that, although the solution $\theta^\epsilon$ of linearized equation is not close ot the actual solution $\xi^\epsilon$ of \eqref{eq:euler_intro}, the characteristics generated by $\theta^\epsilon$ are close to the characteristics generated by $\xi^\epsilon$, in particular they have the same limit as $\epsilon \to 0$.

In the same section we present two main technical results, needed in the proof of \autoref{thm:char}. 
The first of those results is \autoref{prop:old}, which ensures that the linear part $\theta^\epsilon$ of the small-scale dynamics behaves as a Stratonovich white-in-time noise as $\epsilon \to 0$, at least in a distributional sense.
The second result \autoref{prop:sup_z}, instead, aims to rigorously prove the closeness of the characteristics generated by $\theta^\epsilon$ and $\xi^\epsilon$, and it is one of the main novelties of this paper with respect to \cite{FlPa21}.

The proof of \autoref{thm:char} is contained in \autoref{sec:conv_char}, and it is based on a Gronwall-type lemma and It\=o Formula applied to a smooth approximation $g_\delta(x)$ of the absolute value $|x|$, $x \in \R^2$.
The proof of \autoref{thm:conv_large} can be found in \autoref{sec:conv_large}, and it relies on representation formulas \eqref{eq:repr_large_eps} and \eqref{eq:repr_large} and a measure-theoretic argument.

Finally, in \autoref{sec:ex} we discuss how our main motivational examples - cfr. \autoref{ssec:ex} - fit our abstract setting.
In particular, the non-trivial one is the coupled system given by deterministic Navier-Stokes equations at large scales plus stochastic Euler equations at small scales; we identify an additional but very natural condition (A8) on the limit external source $q$ that allows to verify assumptions (A1)-(A7) for the system under consideration.

\section{Notations, preliminaries and main results} \label{sec:notations}

In this section we collect definitions, notations and classical results needed in the paper. Also, we introduce our main working assumptions (A1)-(A7), and state our main results.  

\subsection{Properties of the Biot-Savart kernel}

Here we briefly recall some useful properties of the Biot-Savart kernel $K$. We refer to \cite{MaPu94,BrFlMa16} for details and proofs.

First of all, the Biot-Savart kernel $K$ is defined as $K=-\nabla^\perp G = (\partial_2 G, - \partial_1 G)$, where $G$ is the Green function of the Laplace operator on the torus $\T$ with zero mean.

For $p \in (1,\infty)$ and $\xi \in L^p(\mathbb{T}^2)$ with zero-mean, the convolution with $K$ represents the Biot-Savart operator:
\begin{align*}
K \ast \xi = -\nabla^\perp (-\Delta)^{-1} \xi,
\end{align*}
that to every zero-mean $\xi \in L^p(\mathbb{T}^2)$ associates the unique zero-mean, divergence-free
velocity vector field $u \in W^{1,p}(\mathbb{T}^2,\mathbb{R}^2)$ such that $\mbox{curl}\,u = \xi$.
Moreover, for every $p \in (1,\infty)$ there exist constants $c$, $C$ such that for every zero-mean $\xi \in L^p(\mathbb{T}^2)$
\begin{align*}
c\| \xi \|_{L^p(\mathbb{T}^2)} 
\leq 
\| K \ast \xi \|_{W^{1,p}(\mathbb{T}^2,\mathbb{R}^2)}
\leq
C\| \xi \|_{L^p(\mathbb{T}^2)}.
\end{align*}

Also, recall that since $K \in L^1(\T,\R^2)$ the convolution $K \ast \xi$ is well-defined for every $\xi \in L^p(\mathbb{T}^2)$, $p \in [1,\infty]$ and the following estimate holds: 
\begin{align} \label{eq:K}
\| K \ast \xi \|_{L^p(\mathbb{T}^2,\mathbb{R}^2)}
\leq 
\| K \|_{L^1(\mathbb{T}^2,\mathbb{R}^2)}
\| \xi \|_{L^p(\mathbb{T}^2)}.
\end{align}

Let $r \geq 0$. Denote $\gamma:[0,\infty) \to \R$ the concave function:
\begin{equation*}
\gamma(r) =
r(1-\log r) \mathbf{1}_{\{0<r<1/e\}} + 
(r+1/e)		\mathbf{1}_{\{r\geq 1/e\}}.
\end{equation*}
The following two lemmas are proved in \cite{MaPu94} and \cite{BrFlMa16}.

\begin{lem} \label{lem:log_lip}
There exists a constant $C$ such that:
\begin{equation*}
\int_{\mathbb{T}^2}
\left| K(x-y) - K(x'-y)\right| dy
\leq
C \gamma(|x-x'|)
\end{equation*}
for every $x,x' \in \mathbb{T}^2$.
\end{lem}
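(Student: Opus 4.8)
The statement is the classical log-Lipschitz estimate for the Biot--Savart kernel on the torus, proved in \cite{MaPu94,BrFlMa16}; here is how I would organise the argument. By translation invariance it suffices to bound, for a fixed $a = x - x' \in \T$,
\[
I(a) \coloneqq \int_{\T} \left| K(z) - K(z-a) \right| dz,
\]
and to show $I(a) \le C \gamma(h)$, where $h \coloneqq |a|$ is the geodesic length of the shortest representative of $a$. Since $K \in L^1(\T,\R^2)$ we have the trivial bound $I(a) \le 2 \|K\|_{L^1(\T,\R^2)}$; because $\gamma(h) \ge 1/e$ whenever $h \ge 1/e$, this already settles the regime $h \ge 1/e$ with a constant depending only on $K$. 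So the whole point is the regime $h < 1/e$, where $\gamma(h) = h(1-\log h)$.

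Assume $h < 1/e$. Recall that $K = -\nabla^\perp G$ and that the zero-mean Green function on $\T$ splits as $G(z) = -\tfrac{1}{2\pi}\log|z| + G_0(z)$ with $G_0$ smooth near $0$; hence there are $\rho_0 > 0$ and $C$ with $|K(z)| \le C/|z|$ and $|DK(z)| \le C/|z|^2$ on the punctured ball $\{0 < |z| \le \rho_0\}$, while $K$ and $DK$ are bounded on $\{|z| > \rho_0\}$ (here $|z|$ denotes geodesic distance to $0$ and $DK$ the spatial Jacobian). I would then split $I(a)$ at the scale $2h$ into a piece near the singular set $\{0,a\}$ and a remainder:
\[
I(a) \le \int_{\{|z|\le 2h\}} |K(z)|\,dz + \int_{\{|z-a|\le 2h\}} |K(z-a)|\,dz + \int_{\{|z|>2h\}} |K(z)-K(z-a)|\,dz,
\]
noting that on $\{|z|>2h\}$ one automatically has $|z-a| \ge |z| - h > h$, so the last integrand stays away from both singularities. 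The first two terms are estimated by passing to polar coordinates: $\int_{\{|z|\le 2h\}} C/|z|\,dz \lesssim h \le \gamma(h)$. For the third, along the short geodesic segment joining $z-a$ to $z$ every point $w$ satisfies $|w| \ge |z| - h \ge |z|/2 > 0$, so the fundamental theorem of calculus gives $|K(z)-K(z-a)| \le h \sup_{w}|DK(w)| \lesssim h/|z|^2$ for $|z| \le \rho_0$ and $\lesssim h$ for $|z| > \rho_0$; integrating,
\[
\int_{\{2h < |z| \le \rho_0\}} \frac{h}{|z|^2}\,dz \lesssim h \int_{2h}^{\rho_0} \frac{dr}{r} \lesssim h(1 - \log h) = \gamma(h),
\]
and the far part contributes $\lesssim h |\T| \lesssim \gamma(h)$. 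Summing the three contributions yields $I(a) \le C\gamma(h)$ with $C$ depending only on $K$.

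I do not expect a genuine obstacle: this is a textbook computation. The only points needing a little care are (i) choosing the cutoff proportional to $h$, which is exactly what balances the $O(h)$ mass near the singularities against the logarithmically divergent integral $\int_{2h}^{\rho_0} dr/r$ that produces the $-\log h$ factor in $\gamma$; and (ii) the torus geometry, namely working inside a fundamental domain, using geodesic distance in all the pointwise bounds for $K$ and $DK$, and checking that for $h<1/e$ the short geodesic from $z-a$ to $z$ does not wrap around and stays comparable in distance to $0$ with $z$. Both are routine.
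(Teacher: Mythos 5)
Your argument is correct and is exactly the standard proof: the paper itself does not prove this lemma but defers to \cite{MaPu94} and \cite{BrFlMa16}, where the same scheme (trivial $L^1$ bound for $|x-x'|\geq 1/e$, cutoff at a scale comparable to $h=|x-x'|$, the bound $|K(z)|\lesssim 1/|z|$ near the singularity, and $|DK(z)|\lesssim 1/|z|^2$ on the annulus $2h<|z|\leq\rho_0$ producing the logarithm) is carried out. The only cosmetic slip is that the second term of your splitting should be the integral of $|K(z-a)|$ over $\{|z|\le 2h\}$ (whose image under $z\mapsto z-a$ lies in $\{|w|\le 3h\}$) rather than over $\{|z-a|\le 2h\}$; this changes nothing since either version is $O(h)$.
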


\begin{lem} \label{lem:comp}
Let $T>0$, $\lambda>0$, $a_0 \in [0,\exp(1-2e^{\lambda T})]$ be constants.
Let $a:[0,T] \to \R$ be such that for every $t \in [0,T]$:
\begin{align*}
a_t \leq a_0 + \lambda  \int_0^t \gamma(a_s) ds.
\end{align*}
Then for every $t \in [0,T]$ the following estimate holds:
\begin{align*}
a_t \leq e a_0^{\exp(-\lambda t)}.
\end{align*}
\end{lem}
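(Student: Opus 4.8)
The plan is to bound $a_t$ by its majorant $A_t\coloneqq a_0+\lambda\int_0^t\gamma(a_s)\,ds$ and to turn the resulting differential inequality for $A_t$ — via the substitution $\psi_t\coloneqq\log A_t$ — into an elementary \emph{linear} Gronwall estimate, with the hypothesis on $a_0$ serving (through a continuation argument) to keep $A_t$ inside the region where $\gamma(r)=r(1-\log r)$. I would first treat $a_0>0$ and recover $a_0=0$ at the end by a limiting argument.

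For $a_0>0$: since $\gamma$ is continuous and nondecreasing on $[0,\infty)$ — one has $\gamma'(r)=-\log r>0$ on $(0,1/e)$ and $\gamma'(r)=1$ on $(1/e,\infty)$ — the inequality $a_t\le A_t$ gives $\dot A_t=\lambda\gamma(a_t)\le\lambda\gamma(A_t)$ a.e., while $A$ is continuous, nondecreasing, with $A_0=a_0$. Since $\lambda,T>0$ force $\exp(1-2e^{\lambda T})<1/e$, one has $A_0=a_0<1/e$. I would then set $\tau\coloneqq\sup\{t\in[0,T]:A_s<1/e\text{ for all }s\in[0,t]\}$, which is positive by continuity, and note that on $[0,\tau)$, where $\gamma(A_s)=A_s(1-\log A_s)$ and $A_s\ge a_0>0$, the function $\psi_t\coloneqq\log A_t$ is absolutely continuous and satisfies $\psi_t'=\dot A_t/A_t\le\lambda(1-\psi_t)$ a.e. Multiplying by $e^{\lambda t}$ and integrating yields $\psi_t\le1-e^{-\lambda t}(1-\log a_0)$ on $[0,\tau)$, hence on $[0,\tau]$ by continuity. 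If $\tau<T$ then $A_\tau=1/e$ (by the monotonicity of $A$ and maximality of $\tau$), so $-1=\log A_\tau\le1-e^{-\lambda\tau}(1-\log a_0)$, i.e.\ $1-\log a_0\le2e^{\lambda\tau}<2e^{\lambda T}$, contradicting $a_0\le\exp(1-2e^{\lambda T})$; hence $\tau=T$. The bound $\log A_t\le1-e^{-\lambda t}(1-\log a_0)$ therefore holds on all of $[0,T]$, and exponentiating,
\[
a_t\le A_t\le\exp\!\big(1-e^{-\lambda t}(1-\log a_0)\big)=e\,e^{-e^{-\lambda t}}\,a_0^{\exp(-\lambda t)}\le e\,a_0^{\exp(-\lambda t)},
\]
using $e^{-e^{-\lambda t}}\le1$. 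For $a_0=0$, I would apply the above with any $\eta\in(0,\exp(1-2e^{\lambda T})]$ in place of $a_0$ (legitimate since $a_t\le\eta+\lambda\int_0^t\gamma(a_s)\,ds$) to get $a_t\le e\,\eta^{\exp(-\lambda t)}$ and let $\eta\downarrow0$.

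The step I expect to be the main obstacle is the continuation argument: a priori $A_t$ might cross $1/e$, where $\gamma$ switches to its affine branch and the comparison changes, so one must verify that it does not. The quantitative assumption $a_0\le\exp(1-2e^{\lambda T})$ is calibrated exactly so that the linear Gronwall bound forces $\log A_t\le-1$ on $[0,T]$, closing the bootstrap; everything else is the elementary linear Gronwall lemma applied to $\psi_t=\log A_t$ together with monotonicity of $\gamma$. (Implicitly one needs $\gamma\circ a$ locally integrable — e.g.\ $a$ measurable and locally bounded — so that $A$ is well defined and absolutely continuous, which holds in all the applications in this paper.)
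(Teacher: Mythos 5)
Your proof is correct. The paper does not prove this lemma itself (it defers to \cite{MaPu94} and \cite{BrFlMa16}), and your argument --- majorizing $a$ by $A_t=a_0+\lambda\int_0^t\gamma(a_s)\,ds$, linearizing via $\psi=\log A$ so that $\gamma(A)/A=1-\log A$ turns the Osgood inequality into a linear Gronwall one, and using the hypothesis $a_0\le\exp(1-2e^{\lambda T})$ in a continuation argument to stay on the branch $\gamma(r)=r(1-\log r)$ --- is precisely the standard proof found in those references, including the treatment of $a_0=0$ by letting $\eta\downarrow 0$.
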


\subsection{Stochastic flows of measure-preserving homeomorphisms}

As a convention, in the following we say that $\mathcal{N} \subset \Omega$ (respectively $\tilde{\mathcal{N}} \subset \tilde{\Omega}$) is \emph{negligible} if it is measurable and $\PP(\mathcal{N}) = 0$ (respectively $\tilde{\PP}(\tilde{\mathcal{N}}) = 0$), without explicit mention of the reference probability measure.
Unless otherwise specified, we will always denote with $\mathcal{N}$ negligible sets in $\Omega$, and with $\tilde{\mathcal{N}}$ negligible sets in $\tilde{\Omega}$.

Let us begin this paragraph with the following fundamental definition.
\begin{definition} \label{def:stoch_flow}
A measurable map $\phi: \Omega \times \tilde{\Omega} \times [0,T] \times \T \to \T$ is a \emph{stochastic flow of measure-preserving homeomorphisms} provided there exist negligible sets $\mathcal{N} \subset \Omega$ and $\tilde{\mathcal{N}} \subset \tilde{\Omega}$ such that:
\begin{itemize}
\item
for every $\omega \in \mathcal{N}^c$, $\tilde{\omega} \in \tilde{\mathcal{N}}^c$ and $t \in [0,T]$, the map $\phi(\omega,\tilde{\omega},t,\cdot):\T \to \T$ is a homeomorphism of the torus and
\begin{align*}
\int_\T f(x) dx = \int_\T f(\phi(\omega,\tilde{\omega},t,y)) dy
\end{align*}
for every $f \in L^1(\T)$;
\item
for every $\tilde{\omega} \in \tilde{\mathcal{N}}^c$ and $x \in \T$, the stochastic process $\phi(\cdot,\tilde{\omega},\cdot,x):\Omega \times [0,T] \to \T$ is progressively measurable with respect to the filtration $(\mathcal{F}_t)_{t \in [0,T]}$.
\end{itemize}
\end{definition}
In some circumstances it can be useful to have the following: 
\begin{definition}
A stochastic flow of measure-preserving homeomorphisms $\phi$ is called \emph{inviscid} if there exist negligible sets $\mathcal{N} \subset \Omega$ and $\tilde{\mathcal{N}} \subset \tilde{\Omega}$, and a measurable map $\psi:\Omega \times [0,T] \times \T \to \T$ such that for every $\omega \in \mathcal{N}^c$, $\tilde{\omega} \in \tilde{\mathcal{N}}^c$, $t \in [0,T]$ and $x \in \T$
\begin{align*}
\phi(\omega,\tilde{\omega},t,x) = \psi(\omega,t,x).
\end{align*}
\end{definition}

With a little abuse of notation, hereafter we identify an inviscid stochastic flow of measure-preserving homeomorphisms $\phi$ with its $\tilde{\omega}$-independent representative $\psi$. 

Let us now clarify the meaning of \eqref{eq:char_eps_intro}, \eqref{eq:char_intro}.

A measurable map  $\phi^\epsilon:\Omega \times \tilde{\Omega} \times [0,T] \times \T \to \T$ is a solution of \eqref{eq:char_eps_intro} if there exist negligible sets $\mathcal{N} \subset \Omega$ and $\tilde{\mathcal{N}} \subset \tilde{\Omega}$ such that for every $\omega \in \mathcal{N}^c$, $\tilde{\omega} \in \tilde{\mathcal{N}}^c$, $t \in [0,T]$ and $x \in \T$:
\begin{align*}
\phi^\epsilon(\omega,\tilde{\omega},t,x)
&=
x
+
\int_0^t
v^\epsilon(\omega,s,\phi^\epsilon(\omega,\tilde{\omega},s,x)) ds \\
&\quad+
\int_0^t
u^\epsilon(\omega,s,\phi^\epsilon(\omega,\tilde{\omega},s,x)) ds +
\sqrt{2\nu} w(\tilde{\omega},t),
\end{align*}
where the previous identity can be interpreted as an equation on $\T$ since one can check $\phi^\epsilon(\omega,\tilde{\omega},t,x+2\pi\mathbf{e}) = \phi^\epsilon(\omega,\tilde{\omega},t,x)+2\pi\mathbf{e}$ for $\mathbf{e}=(1,0)$ and $\mathbf{e}=(0,1)$.

Similarly, a measurable map  $\phi:\Omega \times \tilde{\Omega} \times [0,T] \times \T \to \T$ is a solution of \eqref{eq:char_intro} if there exist negligible sets $\mathcal{N} \subset \Omega$ and $\tilde{\mathcal{N}} \subset \tilde{\Omega}$ such that for every $\tilde{\omega} \in \tilde{\mathcal{N}}^c$ and $x \in \T$, the stochastic process $\phi(\cdot,\tilde{\omega},\cdot,x):\Omega \times [0,T] \to \T$ is progressively measurable with respect to the filtration $(\mathcal{F}_t)_{t \in [0,T]}$, and for every $\omega \in \mathcal{N}^c$, $\tilde{\omega} \in \tilde{\mathcal{N}}^c$, $t \in [0,T]$ and $x \in \T$:
\begin{align*}
\phi(\omega,\tilde{\omega},t,x)
&=
x
+
\int_0^t
v(\omega,s,\phi(\omega,\tilde{\omega},s,x)) ds \\
&\quad+
\sum_{k \in \N} \left(\int_0^t \sigma_k(\phi(\cdot,\tilde{\omega},s,x))) \circ dW^k_s \right)(\omega) +
\sqrt{2\nu} w(\tilde{\omega},t).
\end{align*}
Notice that progressive measurability of the process $\phi(\cdot,\tilde{\omega},\cdot,x):\Omega \times [0,T] \to \T$ is necessary to make sense of the Stratonovich stochastic integral apparing in the equation above. 

\subsection{Notions of solution and some well-posedness results}

\subsubsection{Well-posedness of small-scale dynamics and characteristics}

First we make the following assumptions on the external fields:  
\begin{itemize}
\item[(\textbf{A1})]
$v^\epsilon,v:\Omega \times [0,T] \times \T \to \R^2$ and for every $t \in [0,T]$ the maps $v^\epsilon,v |_{\Omega \times [0,t]}:\Omega \times [0,t] \times \T \to \R^2$ are $\mathcal{F}_t \otimes \mathcal{B}_{[0,t]} \otimes \mathcal{B}_{\T}$ measurable, where $\mathcal{B}$ denotes the Borel sigma-field;
\item [(\textbf{A2})]
there exist a constant $C$ and a negligible set $\mathcal{N} \subset \Omega$ such that, for every $\omega \in \mathcal{N}^c$, $\epsilon > 0$ and $t \in [0,T]$: $\dvg v^\epsilon(\omega,t,\cdot) = \dvg v(\omega,t,\cdot) = 0$, and
\begin{gather*}
|v^\epsilon(\omega,t,x)|\leq C, \quad
|v^\epsilon(\omega,t,x) - v^\epsilon(\omega,t,y)| \leq C \gamma(|x-y|),
\\
|v(\omega,t,x)|\leq C, \quad
|v(\omega,t,x) - v(\omega,t,y)| \leq C \gamma(|x-y|),
\end{gather*}
for every $x,y \in \T$.
\end{itemize}
Also, we make the following assumption on the coefficients $(\varsigma_k)_{k \in \N}$:
\begin{itemize}
\item[(\textbf{A3})]
there exists $\ell \geq 1$ such that $\varsigma_k \in W^{\ell,\infty}(\T)$ with zero-mean for every $k \in \N$, and moreover
\begin{align*}
\sum_{k \in \N} \| \varsigma_k \|_{W^{\ell,\infty}(\T)} < \infty.
\end{align*}
\end{itemize}

Similarly to what has been done in the Introduction, given a stochastic flow of measure-preserving homeomorphisms $\phi$ we will use $\phi_t(x)$ as a notational shortcut for $\phi(\omega,\tilde{\omega},t,x)$, thus making implicit the dependence of the randomness variables $\omega,\tilde{\omega}$. The same convenction may be used for the fields $v,u,$ \emph{et cetera}.

The next result can be proved repeating the arguments contained in \cite{BrFlMa16} and \cite{FlPa21}.
\begin{prop} \label{prop:well_posed_small}
Assume (A1)-(A3). Then:
\begin{itemize}
\item
for every $\epsilon>0$ there exist a unique Lagrangian solution $\xi^\epsilon$ of \eqref{eq:euler_intro}, namely there exists a unique stochastic process $\xi^\epsilon : \Omega \times [0,T] \to L^\infty(\T)$ weakly progressively measurable with respect to $(\mathcal{F}_t)_{t \in [0,T]}$ such that the equation   
\begin{align*}
\psi^\epsilon_t(x) &= 
x
+
\int_0^t v^\epsilon_s(\psi^\epsilon_{s}(x)) ds
+
\int_0^t u^\epsilon_s(\psi^\epsilon_{s}(x)) ds,
\end{align*} 
with $u^\epsilon=K\ast \xi^\epsilon$, admits a unique inviscid stochastic flow of measure-preserving homeomorphisms $\psi^\epsilon$ as a solution, and moreover
\begin{align} \label{eq:xi_lagr}
\xi^\epsilon_t(\psi^\epsilon_t(x))
&=
\epsilon^{-1} \sum_{k \in \N}
\int_0^t e^{-\epsilon^{-1}(t-s)}
\varsigma_k(\psi^\epsilon_s(x)) dW^k_s;
\end{align}
\item 
for every $\epsilon>0$ there exists a unique stochastic flow of measure-preserving homeomorphisms $\phi^\epsilon$ solution of \eqref{eq:char_eps_intro}, with $u^\epsilon=K\ast \xi^\epsilon$;
\item
there exists a unique stochastic flow of measure-preserving homeomorphisms $\phi$ solution of \eqref{eq:char_intro}.
\end{itemize}
\end{prop}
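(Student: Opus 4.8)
The plan is to reduce all three statements to the deterministic Yudovich/Marchioro--Pulvirenti theory of incompressible flows with log-Lipschitz velocity, using \autoref{lem:log_lip} and \autoref{lem:comp}, exactly along the lines of \cite{BrFlMa16,FlPa21}; I only indicate the modifications caused by the $\epsilon^{-1}$ scalings (which, for fixed $\epsilon$, are harmless constants) and by the composition of the noise coefficients with the flow in \eqref{eq:xi_lagr}. As a preliminary step I would remove the stochastic integral from \eqref{eq:euler_intro} by introducing the Ornstein--Uhlenbeck--type process
\[
z^\epsilon_t \coloneqq \epsilon^{-1} \sum_{k \in \N} \int_0^t e^{-\epsilon^{-1}(t-s)} \varsigma_k \, dW^k_s ,
\]
which by (A3) (with $\ell \geq 1$), Burkholder--Davis--Gundy and a Kolmogorov continuity argument is a progressively measurable process with $\PP$-a.s.\ continuous paths in $W^{\ell,\infty}(\T) \hookrightarrow C^{0,1}(\T)$, bounded on $[0,T]$; then $\eta^\epsilon \coloneqq \xi^\epsilon - z^\epsilon$ solves a random transport equation with linear damping and a bounded source $-(v^\epsilon + K\ast(\eta^\epsilon + z^\epsilon))\cdot\nabla z^\epsilon$, to be solved $\omega$ by $\omega$ on a full-measure set, with $u^\epsilon = K\ast(\eta^\epsilon + z^\epsilon)$.

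Equivalently, and closer to \eqref{eq:xi_lagr}, one runs the coupled fixed-point map $\xi^\epsilon \mapsto u^\epsilon = K\ast\xi^\epsilon \mapsto \psi^\epsilon \mapsto \bar\xi^\epsilon$, where $\psi^\epsilon$ denotes the ($\omega$-wise) ODE flow of $v^\epsilon + u^\epsilon$ and $\bar\xi^\epsilon_t \coloneqq \big(\epsilon^{-1}\sum_k \int_0^t e^{-\epsilon^{-1}(t-s)}\varsigma_k\circ\psi^\epsilon_s\, dW^k_s\big)\circ(\psi^\epsilon_t)^{-1}$. The a priori bound $\|\xi^\epsilon_t\|_{L^\infty} = \|\xi^\epsilon_t\circ\psi^\epsilon_t\|_{L^\infty} \leq \epsilon^{-1}\sum_k \|\varsigma_k\|_{L^\infty}\sup_{t\leq T}\big|\int_0^t e^{-\epsilon^{-1}(t-s)}dW^k_s\big|$, finite $\PP$-a.s.\ by BDG, prevents blow-up of the $L^\infty$ norm in finite time; estimate \eqref{eq:K} and \autoref{lem:log_lip} then give that $u^\epsilon = K\ast\xi^\epsilon$ is bounded and log-Lipschitz, uniformly in terms of that norm; and \autoref{lem:comp} applied to $|\psi^\epsilon_t(x) - \tilde\psi^\epsilon_t(x)|$ yields both uniqueness of $\psi^\epsilon$ and the contraction property of the fixed-point map (after localising in time, or inserting an exponential weight $e^{-\lambda t}$), whence existence and uniqueness of the Lagrangian solution $\xi^\epsilon$ satisfying \eqref{eq:xi_lagr}. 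Weak progressive measurability of $\xi^\epsilon$ is inherited from the Picard iterates, each progressively measurable by construction; that $\psi^\epsilon$ is a flow of measure-preserving homeomorphisms is precisely the output of the deterministic theory, using $\dvg(v^\epsilon + u^\epsilon) = 0$.

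For the second item, $u^\epsilon = K\ast\xi^\epsilon$ is now a fixed bounded, log-Lipschitz, progressively measurable field, and the problem becomes purely pathwise: for fixed $\tilde\omega$ the substitution $\tilde\phi^\epsilon_t(x) \coloneqq \phi^\epsilon_t(x) - \sqrt{2\nu}\,w_t$ turns \eqref{eq:char_eps_intro} into the ODE $\partial_t\tilde\phi^\epsilon_t = (v^\epsilon_t + u^\epsilon_t)(\tilde\phi^\epsilon_t + \sqrt{2\nu}\,w_t)$, whose right-hand side is again bounded and log-Lipschitz; existence, uniqueness and the homeomorphism/flow property follow from the deterministic theory and \autoref{lem:comp}, measure preservation from incompressibility (translation by $\sqrt{2\nu}w_t$ being volume-preserving), and progressive measurability from the Picard scheme together with the measurability of $u^\epsilon$ and $w$. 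For the third item I would push $\varsigma_k$ through the Biot--Savart operator, so that $\sigma_k = K\ast\varsigma_k \in W^{\ell+1,\infty}(\T) \hookrightarrow C^{1,1}(\T,\R^2)$ with $\sum_k\|\sigma_k\|_{C^{1,1}} < \infty$ by (A3), rewrite the Stratonovich equation \eqref{eq:char_intro} in Itô form (the corrector $\tfrac12\sum_k(\sigma_k\cdot\nabla)\sigma_k$ being bounded and Lipschitz, with summable norms), and invoke Kunita's theory of stochastic flows --- in the version adapted to log-Lipschitz drift used in \cite{BrFlMa16} --- to obtain a stochastic flow of homeomorphisms; the independent additive driver $\sqrt{2\nu}w_t$ is harmless, incompressibility of $v$ and of the $\sigma_k$ forces the flow to be measure preserving, and progressive measurability is built into the construction.

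The only genuinely nontrivial point is the first item: threading the stochastic Duhamel term \eqref{eq:xi_lagr} through the Yudovich fixed point, i.e.\ checking that the classical deterministic estimates --- measure preservation controlling $\|\xi^\epsilon_t\|_{L^\infty}$, and \autoref{lem:comp} controlling the distance between two flows --- remain compatible with the composition $\varsigma_k\circ\psi^\epsilon_s$ under the stochastic integral sign and with the required progressive measurability. None of this interacts with the $\epsilon^{-1}$ factors, so after the cosmetic reductions above the argument of \cite{BrFlMa16,FlPa21} carries over verbatim.
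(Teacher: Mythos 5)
Your proposal follows essentially the same route the paper itself points to: the paper gives no proof of this proposition, stating only that it ``can be proved repeating the arguments contained in \cite{BrFlMa16} and \cite{FlPa21}'', and your Yudovich/Marchioro--Pulvirenti fixed point on the pair $(\xi^\epsilon,\psi^\epsilon)$, combined with \autoref{lem:log_lip} and \autoref{lem:comp} for the flow, and Kunita's theory for \eqref{eq:char_intro}, is exactly that argument.

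Two technical points need repair, though neither is fatal. First, the a priori bound
$\|\xi^\epsilon_t\|_{L^\infty} \leq \epsilon^{-1}\sum_k\|\varsigma_k\|_{L^\infty}\sup_{t\leq T}\bigl|\int_0^t e^{-\epsilon^{-1}(t-s)}dW^k_s\bigr|$
is not a valid pathwise estimate: the integrand $\varsigma_k(\psi^\epsilon_s(x))$ is a genuine adapted process, and an It\=o integral cannot be bounded pointwise by the sup norm of its integrand times the integral of $dW^k$. The correct way to get the a.s.\ finite $L^\infty$ bound is to set $M^\epsilon_t(x)=\sum_k\int_0^t\varsigma_k(\psi^\epsilon_s(x))dW^k_s$, obtain a H\"older modification via BDG and Kolmogorov, and integrate by parts against the exponential kernel --- this is precisely what the paper does later in \autoref{lem:zeta_log} to prove $\E{\sup_t\|\xi^\epsilon_t\|^p_{L^\infty}}\lesssim\epsilon^{-p}$, and you should quote that mechanism rather than the displayed inequality. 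Second, $K\ast$ does not gain a full derivative in $L^\infty$ (Calder\'on--Zygmund fails at $p=\infty$), so $\sigma_k\in W^{\ell+1,\infty}\hookrightarrow C^{1,1}$ is an overclaim; what is true, and sufficient for Kunita's theorem, is $\sigma_k\in W^{\ell+1,p}$ for every $p<\infty$, hence $\sigma_k\in C^{1,\alpha}$ for every $\alpha<1$ with $\sum_k\|\sigma_k\|_{C^{1,\alpha}}\lesssim\sum_k\|\varsigma_k\|_{W^{\ell,\infty}}<\infty$ by (A3). With these two corrections the argument is complete.
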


\begin{rmk}
If $\nu=0$, then both $\phi^\epsilon$ and $\phi$ are inviscid stochastic flows of measure-preserving homeomorphisms, and actually $\phi^\epsilon=\psi^\epsilon$. The terminology is thus justified, since $\nu=0$ corresponds to null diffusivity/viscosity in the equations for the large-scale dynamics \eqref{eq:large_eps_intro} and \eqref{eq:large_intro}.
\end{rmk}

\begin{rmk}
Formula \eqref{eq:xi_lagr} above corresponds to the solution of \eqref{eq:euler_intro} with initial condition $\xi^\epsilon_0=0$, that we assume throughout this paper for the sake of simplicity.
More general initial conditions, as those considered in \cite{FlPa21}, can be taken into account by simply modifying \eqref{eq:xi_lagr} into
\begin{align*}
\xi^\epsilon_t(\psi^\epsilon_t(x))
&=
e^{-\epsilon^{-1}t}\xi^\epsilon_0(x)
+
\epsilon^{-1} \sum_{k \in \N}
\int_0^t e^{-\epsilon^{-1}(t-s)}
\varsigma_k(\psi^\epsilon_s(x)) dW^k_s.
\end{align*}
\end{rmk}

\subsubsection{Notion of solution to the large-scale dynamics}

By previous \autoref{prop:well_posed_small}, under assumption (A1)-(A3) we can use the Euler flow to represent the solutions of \eqref{eq:large_eps_intro} and \eqref{eq:large_intro}. To be more precise, our notion of solution is given exactly by those processes $\Xi^\epsilon$, $\Xi$ for which \eqref{eq:repr_large_eps} and \eqref{eq:repr_large} hold true, and it is inspired by the notion of generalized solution in \cite[Definition 2.2]{BrFl95}. 

\begin{definition} \label{def:sol}
Assume (A1)-(A3), $q^\epsilon,q \in L^1([0,T],L^\infty(\T))$ for every $\epsilon>0$ and $\Xi_0 \in L^\infty(\T)$.
Then:
\begin{itemize}
\item
for every $\epsilon>0$, a measurable map $\Xi^\epsilon : \Omega \times [0,T] \times \T \to \R$ is called \emph{generalized solution} to \eqref{eq:large_eps_intro} if it is compatible with $v^\epsilon$ and for every $t \in [0,T]$ it holds
\begin{align*} 
\Xi^\epsilon_t
&=
\tE{ \Xi_0 \circ (\phi^\epsilon_t)^{-1} + \int_0^t q^\epsilon_s \circ \phi^\epsilon_s \circ(\phi^\epsilon_t)^{-1}ds},
\end{align*}
as an equality in $L^\infty(\Omega \times \T)$, where $\phi^\epsilon$ is the unique stochastic flow of measure-preserving homeomorphisms solution of \eqref{eq:char_eps_intro};
\item
a measurable map $\Xi: \Omega \times [0,T] \times \T \to \R$ is called \emph{generalized solution} to \eqref{eq:large_intro} if it is compatible with $v$ and for every $t \in [0,T]$ it holds
\begin{align*} 
\Xi_t
&=
\tE{ \Xi_0 \circ (\phi_t)^{-1} + \int_0^t q_s \circ \phi_s \circ(\phi_t)^{-1}ds},
\end{align*}
as an equality in $L^\infty(\Omega \times \T)$, where $\phi$ is the unique stochastic flow of measure-preserving homeomorphisms solution of \eqref{eq:char_intro}.
\end{itemize}
\end{definition}

Some remark on the previous definition is appropriate.

First, notice that this notion of solution immediately implies existence and uniqueness in the case of passive large-scale dynamics: indeed, the compatibility condition is void, and $\Xi^\epsilon$ (resp. $\Xi$) depends only on the initial datum $\Xi_0$, the external sources $q^\epsilon$ (resp. $q$), and the characteristics $\phi^\epsilon$ (resp. $\phi$), the latter existing  and being unique by \autoref{prop:well_posed_small}.
For active dynamics this picture is not correct, since the compatibility condition between the external field and the large-scale variable is not encoded in the representation formula itself.
However, we will not investigate in this paper well-posedness for this notion of solution in full generality, but rather assume to have generalized solutions $\Xi^\epsilon$, $\Xi$ to work with. 

Secondly, it is worth of mention that every sufficiently smooth generalized solution of \eqref{eq:large_eps_intro} or \eqref{eq:large_intro} is also a classical solution, as can be proved following the lines of \cite[Theorem 2.2 and Proposition 2.7]{CoIy08}. On the other hand, our notion of generalized solution is weaker than the notion of $L^\infty$-weak solution contained in \cite{BrFlMa16}, that we recall now: 

\begin{definition}
Assume (A1)-(A3), $q^\epsilon,q \in L^1([0,T],L^\infty(\T))$ for every $\epsilon>0$ and $\Xi_0 \in L^\infty(\T)$. For $f,g:\T \to \R$, denote $\langle f, g \rangle \coloneqq \int_\T f(x)g(x)dx$. 
Then:
\begin{itemize}
\item
for every $\epsilon>0$, a stochastic process $\Xi^\epsilon : \Omega \times [0,T] \to L^\infty(\T)$ is called a \emph{$L^\infty$-weak solution} of \eqref{eq:large_eps_intro} if it is weakly progressively measurable with respect to $(\mathcal{F}_t)_{t \in [0,T]}$
and for every smooth test function $f \in C^\infty(\T)$ it holds $\PP$-a.s. for every $t \in [0,T]$:
\begin{align*}
\langle \Xi^\epsilon_t , f \rangle - \langle \Xi^\epsilon_0 , f \rangle
&=
\int_0^t 
\langle \Xi^\epsilon_s , (v^\epsilon_s + u^\epsilon_s) \cdot \nabla f \rangle ds
\\
&\quad
+
\int_0^t 
\langle \Xi^\epsilon_s , \nu \Delta f \rangle ds
+
\int_0^t 
\langle q^\epsilon_s , f \rangle ds;
\end{align*}
\item
a stochastic process $\Xi : \Omega \times [0,T] \to L^\infty(\T)$ is called a \emph{$L^\infty$-weak solution} of \eqref{eq:large_intro} if it is weakly progressively measurable with respect to $(\mathcal{F}_t)_{t \in [0,T]}$
and for every smooth test function $f \in C^\infty(\T)$ it holds $\PP$-a.s. for every $t \in [0,T]$:
\begin{align*}
\langle \Xi_t , f \rangle - \langle \Xi_0 , f \rangle
&=
\int_0^t 
\langle \Xi_s , v_s \cdot \nabla f \rangle ds
+
\sum_{k \in \N}
\int_0^t 
\langle \Xi_s , \sigma_k \cdot \nabla f \rangle \circ dW^k_s
\\
&\quad
+
\int_0^t 
\langle \Xi_s , \nu \Delta f \rangle ds
+
\int_0^t 
\langle q_s , f \rangle ds
.
\end{align*}
\end{itemize}
\end{definition}

\begin{prop}
Assume (A1)-(A3), $q^\epsilon,q \in L^1([0,T],L^\infty(\T))$ for every $\epsilon>0$ and $\Xi_0 \in L^\infty(\T)$.
Then every $L^\infty$-weak solution to \eqref{eq:large_eps_intro} is also a generalized solution to \eqref{eq:large_eps_intro}, and every $L^\infty$-weak solution to \eqref{eq:large_intro} is also a generalized solution to \eqref{eq:large_intro}. 
\end{prop}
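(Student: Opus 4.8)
The plan is to show that the $L^\infty$-weak formulation, once combined with the stochastic flow of characteristics, forces the representation formula of \autoref{def:sol}; in other words, I will verify that the weak formulation is precisely a weak (duality) version of the transport equation solved in Lagrangian coordinates by the right-hand side of \eqref{eq:repr_large} (resp.\ \eqref{eq:repr_large_eps}). I will treat the two cases (with and without $W^k$) in parallel, since the argument for \eqref{eq:large_eps_intro} is the $\nu=0$-style special case with no Stratonovich term, and focus on the harder transport-noise case \eqref{eq:large_intro}.

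First I would fix a smooth test function $f \in C^\infty(\T)$ and, for fixed $t \in [0,T]$, run the flow backwards: I would like to apply the It\^o--Wentzell (generalized It\^o) formula to the process $s \mapsto \langle \Xi_s, f \circ \phi_s \circ (\phi_t)^{-1} \rangle$ — or, more symmetrically and avoiding the inverse flow, to $s \mapsto \langle \Xi_s, f_s \rangle$ where $f_s$ solves the backward transport SPDE $df_s + v_s \cdot \nabla f_s\, ds + \sum_k \sigma_k \cdot \nabla f_s \circ dW^k_s = -\nu \Delta f_s\, ds$ with terminal datum $f_t = f$. Since $\phi$ is a flow of measure-preserving homeomorphisms generated by \eqref{eq:char_intro}, the push-forward identity $\int_\T g \circ \phi_s \circ (\phi_t)^{-1} = \int_\T g$ and the chain rule give that $g_s := g \circ \phi_s \circ (\phi_t)^{-1}$ solves exactly that backward SPDE (in Stratonovich form along characteristics this is just the method of characteristics; the $\sqrt{2\nu} w$ term in \eqref{eq:char_intro} together with $\tilde{\mathbb E}$ produces the $\nu \Delta$ term after averaging in $\tilde\omega$, via the standard Feynman--Kac/averaged-flow computation). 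Combining the weak formulation for $\Xi$ with the equation for $f_s$, all the transport and diffusion terms cancel pairwise in Stratonovich form, and integrating from $0$ to $t$ leaves
\begin{align*}
\langle \Xi_t, f \rangle
&=
\langle \Xi_0, f_0 \rangle + \int_0^t \langle q_s, f_s \rangle\, ds
=
\tilde{\mathbb E}\Big[ \langle \Xi_0, f \circ \phi_0 \circ (\phi_t)^{-1} \rangle + \int_0^t \langle q_s, f \circ \phi_s \circ (\phi_t)^{-1} \rangle\, ds \Big],
\end{align*}
and then by measure-preservation and a change of variables $y = \phi_t(x)$ this equals $\tilde{\mathbb E}\big[\langle \Xi_0 \circ (\phi_t)^{-1} + \int_0^t q_s \circ \phi_s \circ (\phi_t)^{-1} ds,\, f\rangle\big]$; since $f$ is arbitrary this is the generalized-solution identity in $L^\infty(\T)$, and progressive measurability in $\omega$ transfers from the weak solution, so one gets equality in $L^\infty(\Omega \times \T)$. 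For \eqref{eq:large_eps_intro} the same computation with $\sigma_k \equiv 0$ and $u^\epsilon = K \ast \xi^\epsilon$ in place of the noise applies verbatim, with ordinary (random) characteristics and no Stratonovich correction.

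The main obstacle is the rigorous justification of the It\^o--Wentzell step: $\Xi_s$ is only $L^\infty$ in space (no Sobolev regularity) and $f_s = f \circ \phi_s \circ (\phi_t)^{-1}$ is only as regular as the flow $\phi$ allows — and $\phi$ is merely a flow of homeomorphisms, not diffeomorphisms, since the drift $v + K\ast\xi$ is only log-Lipschitz, so $\nabla f_s$ need not be bounded. I would circumvent this by a regularization/density argument: mollify $\Xi_s$ in space (commutator lemma à la DiPerna--Lions, using $\dvg(v + \text{noise}) = 0$ to control the commutator), or alternatively mollify $f$, derive the identity for the regularized objects where It\^o--Wentzell is classical, and pass to the limit using the $L^\infty$ bounds on $\Xi$ and $q$, the measure-preserving property (which gives uniform $L^1$/$L^\infty$ control of $g \circ \phi_s \circ (\phi_t)^{-1}$), and dominated convergence in $(\omega,\tilde\omega,s,x)$. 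The measure-theoretic content — that the weak-solution filtration condition plus these a.e.\ identities yield equality as elements of $L^\infty(\Omega\times\T)$ — is routine once the pointwise-in-$t$ identity is in hand, and is exactly the "measure-theoretic argument" referenced for the proof of \autoref{thm:conv_large}, which can be invoked or repeated here.
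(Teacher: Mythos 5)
Your proposal is correct and follows essentially the same route as the paper: the paper also mollifies the $L^\infty$-weak solution, applies the backwards It\=o formula to the mollified field composed with $\phi_s\circ(\phi_t)^{-1}$, averages in $\tilde\omega$ to produce the $\nu\Delta$ term, and kills the resulting remainder with the DiPerna--Lions-type commutator lemma (\cite[Lemma 17]{FlGuPr10}) as the mollification parameter tends to zero. The only difference is presentational -- you phrase the identity in duality against a test function $f$ rather than pointwise in $y$ for the mollified field -- and you correctly identify both the key obstruction (the flow is only a measure-preserving homeomorphism, so $f\circ\phi_s\circ(\phi_t)^{-1}$ is not differentiable) and its resolution.
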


\begin{proof}
The strategy of the proof is similar to \cite[Proposition 5.3]{BrFlMa16} and \cite[Theorem 20]{FlGuPr10}, and consists in taking the convolution of a $L^\infty$-weak solution with a smooth mollifier $\vartheta_\delta = \delta^{-2} \vartheta(\delta \cdot)$, $\delta>0$, and then taking the limit for $\delta \to 0$.

Let $\Xi^\epsilon$ be a $L^\infty$-weak solution of \eqref{eq:large_eps_intro} and $\Xi$ be a $L^\infty$-weak solution of \eqref{eq:large_intro}, in the sense of the previous definition. Using $f = \vartheta_\delta(y-\cdot)$ as a test function, $y \in \T$, and denoting $\Xi^\epsilon_\delta \coloneqq \vartheta_\delta \ast \Xi^\epsilon$, $\Xi_\delta \coloneqq \vartheta_\delta \ast \Xi$ we get (omitting the parameter $\omega$)
\begin{align*}
\Xi^\epsilon_\delta(t,y) - \Xi^\epsilon_\delta(0,y)
&=
\int_0^t \int_\T
\Xi^\epsilon(s,x) (v^\epsilon(s,x) + u^\epsilon(s,x)) \cdot \nabla_x \vartheta_\delta (y-x) dx ds
\\
&\quad
+
\nu \int_0^t \int_\T
\Xi^\epsilon(s,x) \Delta_x \vartheta_\delta (y-x) dx ds
\\
&\quad
+
\int_0^t \int_\T
q^\epsilon(s,x) \vartheta_\delta (y-x) dx ds,
\end{align*}
and
\begin{align*}
\Xi_\delta(t,y) - \Xi_\delta(0,y)
&=
\int_0^t \int_\T
\Xi(s,x) v(s,x) \cdot \nabla_x \vartheta_\delta (y-x) dx ds
\\
&\quad
+ \sum_{k \in \N}
\int_0^t \int_\T
\Xi(s,x) \sigma_k(x) \cdot \nabla_x \vartheta_\delta (y-x) dx \circ dW^k_s
\\
&\quad
+
\nu \int_0^t \int_\T
\Xi(s,x) \Delta_x \vartheta_\delta (y-x) dx ds
\\
&\quad
+
\int_0^t \int_\T
q(s,x) \vartheta_\delta (y-x) dx ds.
\end{align*}

Since $\Xi^\epsilon_\delta$, $\Xi_\delta$ are smooth functions in the variable $y$, we can write the equivalent expressions in differential notation
\begin{align*}
d\Xi^\epsilon_\delta(t,y)
&+
\nabla\Xi^\epsilon_\delta(t,y) \cdot(v^\epsilon(t,y) + u^\epsilon(t,y)) dt 
\\
&=
\int_\T
\Xi^\epsilon(t,x) (v^\epsilon(t,x) + u^\epsilon(t,x)) \cdot \nabla_x \vartheta_\delta (y-x) dx dt
\\
&\quad
+
\nu \int_\T
\Xi^\epsilon(t,x) \Delta_x \vartheta_\delta (y-x) dx dt
+
\int_\T
q^\epsilon(t,x) \vartheta_\delta (y-x) dx  dt
\\
&\quad
+
\nabla \Xi^\epsilon_\delta(t,y) \cdot (v^\epsilon(t,y) + u^\epsilon(t,y)) dt,
\end{align*}
and
\begin{align*}
d\Xi_\delta(t,y)
&+
\nabla\Xi_\delta(t,y) \cdot v(t,y) dt
+
\sum_{k \in \N}
\nabla\Xi_\delta(t,y) \cdot \sigma_k(y) \circ dW^k_t
\\
&=
\int_\T
\Xi(t,x) v(t,x) \cdot \nabla_x \vartheta_\delta (y-x) dx dt
\\
&\quad
+ \sum_{k \in \N}
\int_\T
\Xi(t,x) \sigma_k(x) \cdot \nabla_x \vartheta_\delta (y-x) dx \circ dW^k_t
\\
&\quad
+
\nu\int_\T
\Xi(t,x) \Delta_x \vartheta_\delta (y-x) dx dt
+
\int_\T
q(t,x) \vartheta_\delta (y-x) dx dt
\\
&\quad+
\nabla\Xi_\delta(t,y) \cdot v(t,y) dt
+
\sum_{k \in \N}
\nabla\Xi_\delta(t,y) \cdot \sigma_k(y) \circ dW^k_t.
\end{align*}

Notice that the following formulas for the gradient of the convolution hold true: $\nabla \Xi^\epsilon_\delta(t,y) = -\int_\T \Xi^\epsilon(t,x) \nabla_x \vartheta_\delta(y-x)$, and $\nabla \Xi_\delta(t,y) = -\int_\T \Xi(t,x) \nabla_x \vartheta_\delta(y-x)$; also, $\Delta_x \vartheta_\delta (y-x) = \Delta_y \vartheta_\delta (y-x)$.
Substituting into the previous expressions, we get
\begin{align*}
d\Xi^\epsilon_\delta(t,y)
&+
\nabla\Xi^\epsilon_\delta(t,y) \cdot(v^\epsilon(t,y) + u^\epsilon(t,y)) dt 
\\
&=
\left[ - \vartheta_\delta \ast \left( \nabla \Xi^\epsilon_t \cdot \left( v^\epsilon_t + u^\epsilon_t \right) \right) + \left( v^\epsilon_t + u^\epsilon_t \right) \cdot \left( \vartheta_\delta \ast \nabla \Xi^\epsilon_t \right)
\right] (y) dt
\\
&\quad
+
\nu \Delta \Xi^\epsilon_\delta(t,y) dt
+
q^\epsilon_\delta(t,y) dt
\\
&=
R_\delta 
\left[ v^\epsilon_t + u^\epsilon_t , \Xi^\epsilon_t \right] (y) dt
+
\nu \Delta \Xi^\epsilon_\delta(t,y) dt
+
q^\epsilon_\delta(t,y) dt,
\end{align*}
and
\begin{align*}
d\Xi_\delta(t,y)
&+
\nabla\Xi_\delta(t,y) \cdot v(t,y) dt
+
\sum_{k \in \N}
\nabla\Xi_\delta(t,y) \cdot \sigma_k(y) \circ dW^k_t
\\
&=
\left[ - \vartheta_\delta \ast \left( \nabla \Xi_t \cdot  v_t\right) +  v_t  \cdot \left( \vartheta_\delta \ast \nabla \Xi_t\right)\right] (y) dt
\\
&\quad
+ \sum_{k \in \N}
\left[ - \vartheta_\delta \ast \left( \nabla \Xi_t \cdot \sigma_k \right) +  \sigma_k  \cdot \left( \vartheta_\delta \ast \nabla \Xi_t\right)\right] (y) \circ dW^k_t
\\
&\quad
+
\nu 
\Delta \Xi_\delta(t,y) dt
+
q_\delta(t,y) dt
\\
&=
R_\delta 
\left[ v_t , \Xi_t \right] (y) dt
+ 
\sum_{k \in \N}
R_\delta 
\left[ \sigma_k , \Xi_t \right] (y) \circ dW^k_t
\\
&\quad
+
\nu 
\Delta \Xi_\delta(t,y) dt
+
q_\delta(t,y) dt,
\end{align*}
where we have defined $q^\epsilon_\delta \coloneqq \vartheta_\delta \ast q^\epsilon$, $q_\delta \coloneqq \vartheta_\delta \ast q$ and the commutator
\begin{align*}
R_\delta 
\left[ v , \Xi \right]
\coloneqq
- \vartheta_\delta \ast \left( \nabla \Xi \cdot v \right) +  v  \cdot \left( \vartheta_\delta \ast \nabla \Xi \right).
\end{align*}

We have obtained differential equations for the spatially smooth processes $\Xi^\epsilon_\delta$ and $\Xi_\delta$.
Applying the backwards It\=o Formula to the processes $s \mapsto \Xi^\epsilon_\delta(s,\phi^\epsilon_s ((\phi^\epsilon_t)^{-1}(y)))$ and $s \mapsto \Xi_\delta(s,\phi_s ((\phi_t)^{-1}(y)))$, for fixed $t \in [0,T]$, and taking the expectation with respect to $\tilde{\PP}$, we obtain that the process $\Xi^\epsilon_\delta$ is given by
\begin{align} \label{eq:Xi_esp_delta}
\Xi^\epsilon_\delta(t,y)
&=
\tE{\Xi^\epsilon_\delta(0,(\phi^\epsilon_t)^{-1}(y)) 
+
\int_0^t q^\epsilon_\delta(s,\phi^\epsilon_s((\phi^\epsilon_t)^{-1}(y)))
ds
}
\\
&\quad+  \nonumber
\tE{ \int_0^t R_\delta 
\left[ v^\epsilon_s + u^\epsilon_s , \Xi^\epsilon_s \right] (\phi^\epsilon_s((\phi^\epsilon_t)^{-1}(y))) ds},
\end{align}
whereas the process $\Xi_\delta$ is given by
\begin{align} \label{eq:Xi_delta}
\Xi_\delta(t,y)
&=
\tE{\Xi_\delta(0,(\phi_t)^{-1}(y)) 
+
\int_0^t q_\delta(s,\phi_s((\phi_t)^{-1}(y)))
ds
}
\\
&\quad+  \nonumber
\tE{ \int_0^t R_\delta 
\left[ v_s , \Xi_s \right] (\phi_s((\phi_t)^{-1}(y))) ds}
\\
&\quad+
\sum_{k\in \N}
\tE{ \int_0^t R_\delta 
\left[ \sigma_k , \Xi_s \right] (\phi_s((\phi_t)^{-1}(y))) \circ dW^k_s} \nonumber
\end{align}

Let us focus on \eqref{eq:Xi_esp_delta}. 
By well-known properties of mollifiers, for every fixed $\omega \in \Omega $ and $t \in [0,T]$, the right-hand side $\Xi^\epsilon_\delta(\omega,t,\cdot) \to \Xi^\epsilon(\omega,t,\cdot)$ in $L^1(\T)$ as $\delta \to 0$. 
Concerning the left-hand side, a commutator lemma \cite[Lemma 17]{FlGuPr10} yields for every fixed $\epsilon>0$
\begin{align*}
\lim_{\delta \to 0}
\int_\T
\left|\tE{ \int_0^t R_\delta 
\left[ v^\epsilon_s + u^\epsilon_s , \Xi^\epsilon_s \right] (\phi^\epsilon_s((\phi^\epsilon_t)^{-1}(y))) ds} \right| dy
= 0,
\end{align*}
and by well-known properties of mollifiers and Lebesgue dominated convergence Theorem we can prove the convergence
\begin{align*}
&\tE{\Xi^\epsilon_\delta(0,(\phi^\epsilon_t)^{-1}) 
+
\int_0^t q^\epsilon_\delta(s,\phi^\epsilon_s((\phi^\epsilon_t)^{-1}))
ds
}
\\
&\quad+  \nonumber
\tE{ \int_0^t R_\delta 
\left[ v^\epsilon_s + u^\epsilon_s , \Xi^\epsilon_s \right] (\phi^\epsilon_s((\phi^\epsilon_t)^{-1})) ds}
\\
&\to \tE{\Xi^\epsilon(0,(\phi^\epsilon_t)^{-1}) 
+
\int_0^t q^\epsilon(s,\phi^\epsilon_s((\phi^\epsilon_t)^{-1}))
ds
}
\end{align*}
in $L^1(\T)$ as $\delta \to 0$, for almost every $\omega \in \Omega$ and $t \in [0,T]$.
Therefore, by \eqref{eq:Xi_delta} we have and the uniqueness of the $L^1(\T)$ limit, for almost every $\omega \in \Omega$, $t \in [0,T]$ and $y \in \T$:
\begin{align*}
\Xi^\epsilon(t,y)
=
\tE{\Xi^\epsilon(0,(\phi^\epsilon_t)^{-1}(y)) 
+
\int_0^t q^\epsilon(s,\phi^\epsilon_s((\phi^\epsilon_t)^{-1}(y)))
ds
},
\end{align*}
that is exactly the desired representation formula \eqref{eq:repr_large_eps}. The argument for \eqref{eq:Xi_delta} is similar, with only a little complication due to the stochastic integral, and we leave it to the reader.
\end{proof}

As a final remark, since we have seen that the notion of generalized solution is weaker than the notion of $L^\infty$-weak solution, our results are indeed very general: they can be applied at least to every $L^\infty$-weak solution.

\subsection{Statement of main results}
\subsubsection{Convergence of characteristics}

Denote $|x-y|$ the geodesic distance on the flat two dimensional torus between points  $x,y \in \T$. 
To keep the notation simple, we define the following quantity associated with a measurable map $\phi:\T \to \T$:
\begin{align*}
\|\phi\|_{L^1(\T,\T)}
\coloneqq
\int_\T
|\phi(x)| dx.
\end{align*}
Notice that $\| \cdot \|_{L^1(\T,\T)}$ is not a norm on the space of measurable maps $\phi:\T\to\T$, in particular it is not positively homogeneous. However, $\| \cdot \|_{L^1(\T,\T)}$ induces a distance on the space $C(\T,\T)$ of continuous maps $\phi:\T \to \T$.
Similarly, we define $\| \cdot \|_{L^\infty(\T,\T)}$ as
\begin{align*}
\| \phi \|_{L^\infty(\T,\T)}
\coloneqq
\mbox{ess}\sup_{x \in \T}
|\phi(x)|.
\end{align*}

In order to prove convergence of characteristics $\phi^\epsilon \to \phi$, it is clear that one needs some sort of control for the difference $v^\epsilon - v$. Therefore, we assume: 
\begin{itemize}
\item[(\textbf{A4})]
there exist a constant $C$ and a negligible set $\mathcal{N} \subset \Omega$ such that for every $\omega \in \mathcal{N}^c$, $\epsilon > 0$ and $t \in [0,T]$:
\begin{align*}
\| v^\epsilon(\omega,t,\cdot) - v(\omega,t,\cdot) \|_{L^1(\T,\R^2)}
&\leq C \gamma\left( \tE{\| \phi^\epsilon_t - \phi_t \|_{L^1(\T,\T)}} \right)
\\
&\quad+
C \int_0^t \gamma\left( \tE{\| \phi^\epsilon_s - \phi_s \|_{L^1(\T,\T)}} \right) ds
+
c_\epsilon,
\end{align*}
where $c_\epsilon \in \R$ is infinitesimal as $\epsilon\to 0$, $\phi^\epsilon_t=\phi^\epsilon(\omega,\tilde{\omega},t,\cdot)$ is the unique solution of \eqref{eq:char_eps_intro}, and $\phi_t=\phi(\omega,\tilde{\omega},t,\cdot)$ is the unique solution of \eqref{eq:char_intro}. 
\end{itemize}

A little less clear, at this point, is our next assumption on the coefficients $(\varsigma_k)_{k \in \N}$:
\begin{itemize}
\item[(\textbf{A5})]
for every $x \in \T$ it holds
\begin{align*}
\sum_{k \in \N} ((K\ast \varsigma_k) \cdot \nabla \varsigma_k)(x) = 0.
\end{align*}
\end{itemize}
The motivations for assuming (A5) will become evident during the proof of \autoref{prop:sup_z} in \autoref{sec:tech}. 

We are ready to state our first main result: 
\begin{thm} \label{thm:char}
Assume (A1)-(A5).
Let $\hE{\cdot} \coloneqq \E{\tE{\cdot}}$ denote the expectation on $\hat{\Omega} \coloneqq \Omega \times \tilde{\Omega}$ with respect to the probability measure $\hat{\PP} \coloneqq \PP \otimes \tilde{\PP}$.
Then
\begin{align*}
\sup_{t \in [0,T]}  \hE{ \| \phi_t^\epsilon-\phi_t \|_{L^1(\T,\T)}}
\to 0
\quad \mbox{ as } \epsilon \to 0.
\end{align*}
\end{thm}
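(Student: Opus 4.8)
\emph{Overall strategy.} The plan is to route the argument through the linearized small-scale dynamics. By the triangle inequality for the distance induced by $\|\cdot\|_{L^1(\T,\T)}$,
\[
\hE{\|\phi^\epsilon_t-\phi_t\|_{L^1(\T,\T)}}
\le
\hE{\|\phi^\epsilon_t-\phi^{\theta,\epsilon}_t\|_{L^1(\T,\T)}}
+\hE{\|\phi^{\theta,\epsilon}_t-\phi_t\|_{L^1(\T,\T)}},
\]
where $\phi^{\theta,\epsilon}$ is the flow obtained by replacing $u^\epsilon=K\ast\xi^\epsilon$ in \eqref{eq:char_eps_intro} by $K\ast\theta^\epsilon$, with $\theta^\epsilon$ the solution of the linearized version of \eqref{eq:euler_intro}. \autoref{prop:sup_z} bounds the first term by a quantity infinitesimal as $\epsilon\to0$, uniformly in $t$ (this is where (A5) is used), so the task reduces to showing $\sup_{t\in[0,T]}a_\epsilon(t)\to0$ for $a_\epsilon(t)\coloneqq\hE{\|\phi^{\theta,\epsilon}_t-\phi_t\|_{L^1(\T,\T)}}$.

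\emph{It\=o formula and the drift terms.} For fixed $x\in\T$ set $Z^\epsilon_t(x)\coloneqq\phi^{\theta,\epsilon}_t(x)-\phi_t(x)\in\R^2$ (the Brownian terms $\sqrt{2\nu}\,w_t$ cancel). I would apply the It\=o formula to $g_\delta(Z^\epsilon_t(x))$, with $g_\delta\in C^2(\R^2)$ convex, $g_\delta\le|\cdot|\le g_\delta+\delta$, $|\nabla g_\delta|\le 1$, $\|\nabla^2 g_\delta\|\lesssim\delta^{-1}\mathbf{1}_{\{|z|\le\delta\}}$, then integrate over $x\in\T$, use that $\phi^{\theta,\epsilon}_s$ and $\phi_s$ are measure-preserving, and take $\hE{\cdot}$. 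The drift generated by $v^\epsilon_s(\phi^{\theta,\epsilon}_s(x))-v_s(\phi_s(x))$ splits as $|v^\epsilon_s-v_s|(\phi^{\theta,\epsilon}_s(x))+|v_s(\phi^{\theta,\epsilon}_s(x))-v_s(\phi_s(x))|$: the first integrates in $x$ (by measure-preservation) to $\|v^\epsilon_s-v_s\|_{L^1(\T,\R^2)}$, bounded via (A4) and \autoref{prop:sup_z} by $C\gamma(a_\epsilon(s))+C\int_0^s\gamma(a_\epsilon(r))\,dr+o_\epsilon(1)$; the second is $\le C\gamma(|Z^\epsilon_s(x)|)$ by (A2), and after integrating in $x$ and applying $\hE{\cdot}$ the concavity of $\gamma$ together with Jensen gives $\le C\gamma(a_\epsilon(s))$.

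\emph{The small-scale velocity term --- the crux.} The delicate part is the contribution of $\int_0^t(K\ast\theta^\epsilon_s)(\phi^{\theta,\epsilon}_s(x))\,ds$ versus $\sum_k\int_0^t\sigma_k(\phi_s(x))\circ dW^k_s$. I would exploit the explicit Ornstein--Uhlenbeck-type representation of $\theta^\epsilon$ along its Lagrangian trajectories --- the linear analogue of \eqref{eq:xi_lagr} --- together with an integration by parts in time, and then invoke \autoref{prop:old} to reduce, inside the It\=o formula against the adapted weight $\nabla g_\delta(Z^\epsilon_s(x))$, this contribution to: (a) a true $W$-martingale, of zero expectation, whose quadratic variation feeds the second-order term of the It\=o formula and is controlled, using $\|\nabla^2 g_\delta\|\lesssim\delta^{-1}\mathbf{1}_{\{|z|\le\delta\}}$ and the Lipschitz bound $|\sigma_k(a)-\sigma_k(b)|\le\|\nabla\sigma_k\|_\infty|a-b|$ (finite and summable by (A3) and the Biot--Savart estimates), by a term of order $\delta$; (b) the It\=o--Stratonovich corrector $\tfrac12\sum_k(\sigma_k\cdot\nabla)\sigma_k$, to be matched between the two flows; (c) an error that is $o_\epsilon(1)$ after integration in $x$ and expectation. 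The genuine obstacle, in my view, is the precise quantitative form of \autoref{prop:old} and the reconciliation of (b): one must show that the small-scale velocity relaxes to the Stratonovich transport noise fast enough, and with a corrector regular enough, for the resulting error to be absorbable into a log-Lipschitz Gronwall, using only the regularity afforded by (A3) and the 2D Biot--Savart structure.

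\emph{Closing the estimate.} Collecting the bounds and using $g_\delta\le|\cdot|\le g_\delta+\delta$ once more, one reaches, for each $\delta>0$, an inequality $a_\epsilon(t)\le C\delta+o_\epsilon(1)+C\int_0^t\gamma(a_\epsilon(s))\,ds$ for all $t\in[0,T]$. Letting $\delta=\delta(\epsilon)\to0$ slowly yields $a_\epsilon(t)\le\rho_\epsilon+C\int_0^t\gamma(a_\epsilon(s))\,ds$ with $\rho_\epsilon\to0$; for $\epsilon$ small $\rho_\epsilon$ satisfies the smallness hypothesis of \autoref{lem:comp}, whence $a_\epsilon(t)\le e\,\rho_\epsilon^{\exp(-CT)}$ for every $t\in[0,T]$. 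Thus $\sup_{t\in[0,T]}a_\epsilon(t)\to0$, and combined with the \autoref{prop:sup_z} bound and the triangle inequality this proves the theorem.
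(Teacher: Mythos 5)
Your treatment of the comparison between the ``linearized'' flow and the limit flow $\phi$ (It\=o formula on $g_\delta$, splitting of the drift, Jensen plus concavity of $\gamma$, Wong--Zakai via \autoref{prop:old}, log-Lipschitz Gronwall via \autoref{lem:comp}) is essentially the paper's argument. The genuine gap is in your very first step: the introduction of the intermediate flow $\phi^{\theta,\epsilon}$ and the claim that $\hE{\|\phi^\epsilon_t-\phi^{\theta,\epsilon}_t\|_{L^1(\T,\T)}}$ is controlled ``by \autoref{prop:sup_z}''. That proposition only bounds the additive perturbation $\int_0^t z^\epsilon_s(\phi^\epsilon_s(x))\,ds$ along the trajectories of $\phi^\epsilon$; to convert this into a bound on the distance between the two flows you need a stability (Gronwall) estimate for the equation satisfied by $\phi^\epsilon-\phi^{\theta,\epsilon}$, and the rate in that Gronwall is governed by the modulus of continuity of the \emph{common} drift $v^\epsilon+K\ast\theta^\epsilon$ (equivalently of $v^\epsilon+u^\epsilon$). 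By \eqref{eq:est_theta} and \autoref{lem:log_lip} this modulus is of order $\epsilon^{-1/2}\log^{1/2}(1+\epsilon^{-1})$, so the constant $\lambda$ in \autoref{lem:comp} blows up: the smallness hypothesis $a_0\le\exp(1-2e^{\lambda T})$ fails for any polynomially small $a_0$, and even formally the conclusion $a_t\le e\,a_0^{\exp(-\lambda t)}$ tends to $e$, not to $0$. So the first term of your triangle inequality is not controlled by your argument.

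The paper avoids this precisely by \emph{not} introducing an intermediate flow: it writes a single equation for $\phi^\epsilon_t(x)-\phi_t(x)$, decomposes $u^\epsilon_s(\phi^\epsilon_s(x))=\sum_k\sigma_k(\phi^\epsilon_s(x))\eta^{\epsilon,k}_s+z^\epsilon_s(\phi^\epsilon_s(x))$, and lumps \emph{both} small contributions --- the Wong--Zakai error of \autoref{prop:old} and the nonlinear correction of \autoref{prop:sup_z} --- into one additive remainder $R^\epsilon_t(x)$, applying It\=o's formula to $g_\delta(Z^\epsilon_t(x))$ with $Z^\epsilon_t(x)=\phi^\epsilon_t(x)-\phi_t(x)-R^\epsilon_t(x)$. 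In this single Gronwall the only constants entering $\gamma$ are the $O(1)$ log-Lipschitz constant of $v$ from (A2), the constant of (A4) (which, note, is stated in terms of $\phi^\epsilon-\phi$, not of $\phi^{\theta,\epsilon}-\phi$, so your use of it with $a_\epsilon(s)=\hE{\|\phi^{\theta,\epsilon}_s-\phi_s\|_{L^1(\T,\T)}}$ is also off), and the $O(1)$ Lipschitz constants of the $\sigma_k$ from (A3). To repair your proof, drop the intermediate flow and fold the \autoref{prop:sup_z} term into the same remainder you already use for the \autoref{prop:old} term.
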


\subsubsection{Convergence of large-scale dynamics}

Let $q^\epsilon,q : [0,T] \times \T \to \R$ be such that:
\begin{itemize}
\item[(\textbf{A6})]
there exists a constant $C$ such that for every $\epsilon>0$ it holds $q^\epsilon,q \in$\\ $L^1([0,T],L^\infty(\T))$ and
\begin{align*}
\int_0^T \| q^\epsilon_s \|_{L^\infty(\T)} ds \leq C,
\qquad
\int_0^T \| q_s \|_{L^\infty(\T)} ds \leq C;
\end{align*}
\item[(\textbf{A7})]
$q^\epsilon-q$ converges to zero in $L^1([0,T],L^\infty(\T))$.
\end{itemize}
Our second main result is the following:
\begin{thm} \label{thm:conv_large}
Assume (A1)-(A7) and $\Xi_0\in L^\infty(\T)$.
Then the solution $\Xi^\epsilon$ of \eqref{eq:large_eps_intro} converges towards the solution $\Xi$ of \eqref{eq:large_intro} in the following sense: for every $f \in L^1(\T)$
\begin{align*}
\E{\left|\int_\T \Xi^\epsilon_t(x)  f(x)dx
-\int_\T \Xi_t(x)  f(x)dx \right|} \to 0
\quad \mbox{ as } \epsilon \to 0,
\end{align*}
for every fixed $t\in [0,T]$ and in $L^p([0,T])$ for every finite $p$.
Moreover, if $q \in L^1([0,T],Lip(\T))$ then the previous convergence holds uniformly in $t \in [0,T]$ and $f \in Lip(\T)$ with Lipschitz constant $[f]_{Lip(\T)} \leq 1$ and $\|f\|_{L^\infty(\T)} \leq 1$.
\end{thm}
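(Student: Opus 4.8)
The plan is to derive everything from the representation formulas \eqref{eq:repr_large_eps}--\eqref{eq:repr_large}, the fact that $\phi^\epsilon$ and $\phi$ are measure-preserving, and the convergence of characteristics proved in \autoref{thm:char}. First I would record the a priori bound
\[
\|\Xi^\epsilon_t\|_{L^\infty(\T)},\ \|\Xi_t\|_{L^\infty(\T)}
\leq \|\Xi_0\|_{L^\infty(\T)} + \int_0^T \|q^\epsilon_s\|_{L^\infty(\T)}\,ds
\leq \|\Xi_0\|_{L^\infty(\T)} + C,
\]
which is immediate from the representation formula and (A6). Testing $\Xi^\epsilon_t-\Xi_t$ against $f-f_m$ then costs at most $2(\|\Xi_0\|_{L^\infty(\T)}+C)\|f-f_m\|_{L^1(\T)}$, uniformly in $\epsilon$ and $t$; since $Lip(\T)$ is dense in $L^1(\T)$, it suffices to treat $f\in Lip(\T)$.

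Next, by Fubini and the measure-preserving change of variable $x=\phi^\epsilon_t(y)$, the representation formula becomes
\[
\int_\T \Xi^\epsilon_t(x) f(x)\,dx = \tE{\int_\T \Xi_0(y) f(\phi^\epsilon_t(y))\,dy + \int_0^t\int_\T q^\epsilon_s(\phi^\epsilon_s(y))f(\phi^\epsilon_t(y))\,dy\,ds},
\]
and analogously for $\Xi_t$ with $\phi$, $q$ replacing $\phi^\epsilon$, $q^\epsilon$. Subtracting, moving absolute values inside $\tilde{\mathbb E}$ and applying $\mathbb E$, the error is dominated by $\hat{\mathbb E}$ of three terms: the initial-datum term, bounded by $\|\Xi_0\|_{L^\infty(\T)}[f]_{Lip(\T)}\|\phi^\epsilon_t-\phi_t\|_{L^1(\T,\T)}$, which vanishes by \autoref{thm:char}; the source-mismatch term, bounded by $\|f\|_{L^\infty(\T)}\int_0^T\|q^\epsilon_s-q_s\|_{L^\infty(\T)}\,ds$, which vanishes by (A7) and measure-preservation; and the coupled term $\int_0^t\int_\T|q_s(\phi^\epsilon_s(y))f(\phi^\epsilon_t(y))-q_s(\phi_s(y))f(\phi_t(y))|\,dy\,ds$.

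The coupled term is the main obstacle. I would split $q_s(\phi^\epsilon_s)f(\phi^\epsilon_t)-q_s(\phi_s)f(\phi_t) = q_s(\phi^\epsilon_s)[f(\phi^\epsilon_t)-f(\phi_t)] + [q_s(\phi^\epsilon_s)-q_s(\phi_s)]f(\phi_t)$. The first summand is bounded by $[f]_{Lip(\T)}\big(\int_0^T\|q_s\|_{L^\infty(\T)}\,ds\big)\|\phi^\epsilon_t-\phi_t\|_{L^1(\T,\T)}$, controlled again by (A6) and \autoref{thm:char}. The delicate summand is $\int_0^t\int_\T|q_s(\phi^\epsilon_s(y))-q_s(\phi_s(y))||f(\phi_t(y))|\,dy\,ds$, because $q_s$ is merely $L^\infty$ while $\phi^\epsilon\to\phi$ only in $L^1$. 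The device is to mollify $q$ in space, $q^{(m)}_s\coloneqq\vartheta_{1/m}\ast q_s$, so that $[q^{(m)}_s]_{Lip(\T)}\leq C_m\|q_s\|_{L^\infty(\T)}$: by measure-preservation the mollification error becomes $\int_0^T\|q_s-q^{(m)}_s\|_{L^1(\T)}\,ds$, which tends to $0$ as $m\to\infty$ uniformly in $\epsilon$ (dominated by $2|\T|\int_0^T\|q_s\|_{L^\infty(\T)}\,ds$), while for fixed $m$ the Lipschitz bound gives $\int_0^t\int_\T|q^{(m)}_s(\phi^\epsilon_s(y))-q^{(m)}_s(\phi_s(y))|\,dy\,ds \leq C_m\big(\int_0^T\|q_s\|_{L^\infty(\T)}\,ds\big)\sup_{s\in[0,T]}\|\phi^\epsilon_s-\phi_s\|_{L^1(\T,\T)}$, whose $\hat{\mathbb E}$ vanishes by \autoref{thm:char}. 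Choosing $m$ first and then sending $\epsilon\to0$ closes the estimate; this is the measure-theoretic argument mentioned in \autoref{sec:notations}.

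Since every estimate above produces, for fixed $f\in L^1(\T)$, a bound on $\E{\big|\int_\T\Xi^\epsilon_t(x)f(x)\,dx - \int_\T\Xi_t(x)f(x)\,dx\big|}$ that is uniform in $t\in[0,T]$ (via the a priori bound) and tends to $0$ for each fixed $t$, dominated convergence on the finite interval $[0,T]$ upgrades this to $L^p([0,T])$ convergence for every finite $p$. For the last assertion the reduction step is unnecessary, and the stronger hypothesis $q\in L^1([0,T],Lip(\T))$ lets one skip the mollification: the coupled term is estimated directly by $\int_0^T[q_s]_{Lip(\T)}\|\phi^\epsilon_s-\phi_s\|_{L^1(\T,\T)}\,ds$. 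All bounds then depend on $f$ only through $[f]_{Lip(\T)}\leq1$ and $\|f\|_{L^\infty(\T)}\leq1$ and, after extracting $\sup_{s\in[0,T]}$, no longer on $t$, so \autoref{thm:char} yields convergence uniform in $t$ and $f$.
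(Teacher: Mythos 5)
Your proposal is correct and follows the paper's global architecture (representation formulas, the measure-preserving change of variables, and \autoref{thm:char}), but it handles the one genuinely delicate term --- $\hE{\int_0^t\int_\T |q_s(\phi^\epsilon_s(y))-q_s(\phi_s(y))|\,|f(\phi_t(y))|\,dy\,ds}$ with $q$ merely $L^\infty$ in space --- by a different device. The paper argues by contradiction: it applies Lusin's theorem to $q$ as a function of $(s,x)$ jointly, replaces $q$ by a continuous $Q_\delta$ off a set of small $\mathscr{L}_{[0,t]}\otimes\mathscr{L}_\T$-measure (controlled via measure preservation and absolute continuity of the integral), and then extracts a subsequence along which $\phi^{\epsilon_k}_s\to\phi_s$ almost everywhere simultaneously in $s$ so that dominated convergence applies to $Q_\delta$. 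Your mollification $q^{(m)}_s=\vartheta_{1/m}\ast q_s$ achieves the same end more directly: the mollification error costs $2\|f\|_{L^\infty}\int_0^T\|q_s-q^{(m)}_s\|_{L^1(\T)}\,ds$ (vanishing as $m\to\infty$ by dominated convergence in $s$, uniformly in $\epsilon$), while for fixed $m$ the bound $[q^{(m)}_s]_{Lip}\leq C_m\|q_s\|_{L^\infty}$ reduces everything to \autoref{thm:char}. This avoids both the contradiction framing and the subsequence extraction, is quantitative in $m$, and only needs regularization in $x$ rather than joint continuity in $(s,x)$; the paper's Lusin route is more general-purpose but less explicit. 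Your upfront reduction to $f\in Lip(\T)$ via the a priori bound $\|\Xi^\epsilon_t\|_{L^\infty}\leq\|\Xi_0\|_{L^\infty}+C$ is also a clean way to package what the paper delegates to the reasoning of \cite[Theorem 5.1]{FlPa21} for the terms involving $f(\phi^\epsilon_t)-f(\phi_t)$.

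One small imprecision: in the fixed-$m$ step you bound the integral by $C_m\bigl(\int_0^T\|q_s\|_{L^\infty(\T)}\,ds\bigr)\sup_{s}\|\phi^\epsilon_s-\phi_s\|_{L^1(\T,\T)}$ \emph{before} taking $\hE{\,\cdot\,}$ and then invoke \autoref{thm:char}; but that theorem controls $\sup_s\hE{\|\phi^\epsilon_s-\phi_s\|_{L^1(\T,\T)}}$, not $\hE{\sup_s\|\phi^\epsilon_s-\phi_s\|_{L^1(\T,\T)}}$. The fix is immediate: keep the time integral, apply Fubini, and only then pull out the supremum,
\begin{align*}
\hE{\int_0^t\|q_s\|_{L^\infty(\T)}\|\phi^\epsilon_s-\phi_s\|_{L^1(\T,\T)}\,ds}
\leq
\left(\int_0^T\|q_s\|_{L^\infty(\T)}\,ds\right)\sup_{s\in[0,T]}\hE{\|\phi^\epsilon_s-\phi_s\|_{L^1(\T,\T)}},
\end{align*}
which is exactly the form the paper itself uses in the Lipschitz case. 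With that adjustment the argument is complete.
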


\section{Technical results} \label{sec:tech}

In this section and after in the paper, the symbol $\lesssim$ will indicate inequality up to a unimportant multiplicative constant $C$ not depending of $\epsilon$.

\subsection{Linearized dynamics}
For $\epsilon>0$, denote $\theta^\epsilon$ the solution of the linear problem
\begin{align*}
d\theta^\epsilon_t=-\epsilon^{-1} \theta^\epsilon_t dt + \epsilon^{-1} \sum_{k \in\N} \varsigma_k dW^k_t,
\end{align*}
with initial condition $\theta^\epsilon|_{t=0}=0$.
The process $\theta^\epsilon$ is explicitly given by the formula $\theta^\epsilon_t = 
\sum_{k \in \N} \varsigma_k \eta^{\epsilon,k}_t$, where
\begin{align*}
\eta^{\epsilon,k}_t \coloneqq \epsilon^{-1} \int_0^t e^{-\epsilon^{-1}(t-s)} dW^k_s,
\quad
k \in \N,
\end{align*} 
is the so called Ornstein-Uhlenbeck process with null initial condition. By \cite[Theorem 2.2]{JiZh20}, for every fixed $p \geq 1$ it holds uniformly in $k \in \N$
\begin{align} \label{eq:OU}
\E{\sup_{t \in [0,T]} |\eta^{\epsilon,k}_t|^p } \lesssim \epsilon^{-p/2} \log^{p/2}(1+\epsilon^{-1}), 
\end{align}
and therefore by assumption (A3)
\begin{align} \label{eq:est_theta}
\E{\sup_{t \in [0,T]} \|\theta^\epsilon_t\|^p_{W^{1,\infty}(\T)} } \lesssim \epsilon^{-p/2} \log^{p/2}(1+\epsilon^{-1}). 
\end{align}

The difference $\zeta^\epsilon \coloneqq \xi^{\epsilon} - \theta^\epsilon$ between the small-scale vorticity $\xi^\epsilon$ and $\theta^\epsilon$ solves the equation
\begin{align*}
d\zeta^\epsilon_t + (v^\epsilon_t+u^\epsilon_t)\cdot\nabla \zeta^\epsilon_t dt = -\epsilon^{-1} \zeta^\epsilon_t dt - (v^\epsilon_t+u^\epsilon_t)\cdot\nabla \theta^\epsilon_t dt
\end{align*}
with initial condition $\zeta^\epsilon_0=0$, whose solution satisfies
\begin{align} \label{eq:zeta_psi}
\zeta^\epsilon_t(\psi^\epsilon_t(x)) 
&=
-\int_0^t
e^{-\epsilon^{-1}(t-s)}
((v^\epsilon_s + u^\epsilon_s) \cdot\nabla \theta^\epsilon_s)(\psi^\epsilon_s(x)) ds. 
\end{align} 
In the following, for $t \in [0,T]$ and $x \in \T$ we denote $z^\epsilon_t(x)  = (K \ast \zeta^\epsilon_t)(x)$.

\subsection{Main technical results}
We are going to prove two main technical results, needed for the proof of \autoref{thm:char}. Since our strategy consists in replicating the proof of \cite[Proposition 4.1]{FlPa21}, the first result we need is the following:
\begin{prop} \label{prop:old}
Assume (A1)-(A3). 
Then the following inequality holds: 
\begin{align*}
\hE{ \sup_{t \in [0,T]}\left\|
\sum_{k \in \N}
\int_0^t
\sigma_k(\phi^\epsilon_s(\cdot)) \eta^{\epsilon,k}_s ds
-
\sum_{k \in \N}
\int_0^t
\sigma_k(\phi^\epsilon_s(\cdot)) \circ dW^k_s
\right\|_{L^1(\T,\R^2)}}
\\
\lesssim
\epsilon^{1/42} \log^{47/42}(1+\epsilon).
\end{align*}
\end{prop}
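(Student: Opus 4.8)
The plan is to estimate the difference between the "Riemann sum with Ornstein–Uhlenbeck weights" and the Stratonovich integral by first converting the Stratonovich integral into an It\^o integral plus the corresponding correction term, and then controlling separately the It\^o-type discrepancy and the corrector discrepancy. More precisely, recalling that $\sigma_k = K\ast\varsigma_k$ and that $\phi^\epsilon$ solves \eqref{eq:char_eps_intro}, I would write the Stratonovich integral as
\begin{align*}
\sum_{k}\int_0^t \sigma_k(\phi^\epsilon_s)\circ dW^k_s
=
\sum_k \int_0^t \sigma_k(\phi^\epsilon_s)\,dW^k_s
+
\frac12 \sum_k \int_0^t \big(\nabla\sigma_k\,\sigma_k^\epsilon\big)(\phi^\epsilon_s)\,d[\,\cdot\,]_s,
\end{align*}
where the joint quadratic variation is generated by the noise entering $\phi^\epsilon$ through $u^\epsilon = K\ast\xi^\epsilon$ and the explicit formula \eqref{eq:xi_lagr}; here is where the Ornstein–Uhlenbeck structure of $\eta^{\epsilon,k}$ and the scaling $\epsilon^{-1}e^{-\epsilon^{-1}(t-s)}$ come in, producing in the limit exactly the Stratonovich corrector $\tfrac12\sum_k (\nabla\sigma_k)\sigma_k$ (this is the content already isolated in \cite{FlPa21}, and assumption (A3) makes all the series converge in $W^{1,\infty}$).

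Next, for the "It\^o part" I would compare $\sum_k \int_0^t \sigma_k(\phi^\epsilon_s)\eta^{\epsilon,k}_s\,ds$ with $\sum_k \int_0^t \sigma_k(\phi^\epsilon_s)\,dW^k_s$. Using the defining SDE $d\eta^{\epsilon,k}_s = -\epsilon^{-1}\eta^{\epsilon,k}_s\,ds + \epsilon^{-1}dW^k_s$, one has $dW^k_s = \epsilon\,d\eta^{\epsilon,k}_s + \eta^{\epsilon,k}_s\,ds$, so
\begin{align*}
\int_0^t \sigma_k(\phi^\epsilon_s)\eta^{\epsilon,k}_s\,ds
-
\int_0^t \sigma_k(\phi^\epsilon_s)\,dW^k_s
=
-\epsilon\int_0^t \sigma_k(\phi^\epsilon_s)\,d\eta^{\epsilon,k}_s,
\end{align*}
and an integration by parts turns the right-hand side into boundary terms $-\epsilon\,\sigma_k(\phi^\epsilon_t)\eta^{\epsilon,k}_t$ plus $\epsilon\int_0^t \eta^{\epsilon,k}_s\,d\big(\sigma_k(\phi^\epsilon_s)\big)$. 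The boundary term is controlled by $\epsilon\,\E{\sup_t|\eta^{\epsilon,k}_t|}\lesssim \epsilon\cdot\epsilon^{-1/2}\log^{1/2}(1+\epsilon^{-1})$ via \eqref{eq:OU}, which is the source of a power of $\epsilon^{1/2}$ up to logs; the remaining term requires expanding $d(\sigma_k(\phi^\epsilon_s))$ by It\^o's formula, which again brings in $u^\epsilon = K\ast\xi^\epsilon$ and hence, through \eqref{eq:xi_lagr}, factors of $\epsilon^{-1}\eta^{\epsilon,k}$ that partially cancel the $\epsilon$ in front. Keeping careful track of these cancellations, summing over $k$ using (A3) and $\| K\ast\cdot\|\lesssim\|\cdot\|$ from \eqref{eq:K}, and taking $\hE{\sup_{t}\|\cdot\|_{L^1}}$, one arrives at a bound of the form $\epsilon^{\alpha}\log^{\beta}(1+\epsilon^{-1})$ for some explicit small $\alpha>0$; the somewhat baroque exponent $1/42$ in the statement strongly suggests that the optimal $\alpha$ is obtained by interpolating several estimates with different powers (a H\"older/Young optimization over the exponents in \eqref{eq:OU} and \eqref{eq:est_theta}), which I would carry out at the end.

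The main obstacle is precisely this bookkeeping of the velocity-field feedback: $\phi^\epsilon$ is driven not by $W^k$ directly but by $u^\epsilon_s(\phi^\epsilon_s) = (K\ast\xi^\epsilon_s)(\phi^\epsilon_s)$, and $\xi^\epsilon$ itself is a convolution against the singular-in-$\epsilon$ kernel $\epsilon^{-1}e^{-\epsilon^{-1}(t-s)}$ of the $\varsigma_k$-weighted Ornstein–Uhlenbeck processes. So every It\^o expansion of $\sigma_k(\phi^\epsilon_s)$ reintroduces a factor $\epsilon^{-1}$ and a time-convolution, and one must exploit the fast mixing of the OU process (its decorrelation on scale $\epsilon$) to see that these do not blow up but instead leave a net small power of $\epsilon$. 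Concretely this means estimating quantities like $\epsilon^{-1}\int_0^t e^{-\epsilon^{-1}(t-s)}(\cdots)ds$ in $L^p(\Omega)$ using Burkholder–Davis–Gundy together with \eqref{eq:OU} and \eqref{eq:est_theta}, and then balancing the resulting powers. I expect that, once the structure is set up, each individual estimate is routine, and the real work is organizing the expansion so the cancellations are visible and choosing the interpolation exponents to land on $\epsilon^{1/42}\log^{47/42}(1+\epsilon^{-1})$.
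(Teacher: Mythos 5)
Your proposal has the right raw ingredients in places, but its central mechanism is wrong. You propose to produce the corrector $\tfrac12\sum_k\nabla\sigma_k\cdot\sigma_k$ as an It\^o--Stratonovich conversion coming from a joint quadratic variation ``generated by the noise entering $\phi^\epsilon$ through $u^\epsilon=K\ast\xi^\epsilon$''. But $\phi^\epsilon$ solves \eqref{eq:char_eps_intro}: its dependence on $W$ enters only through the Lebesgue integrals $\int_0^t v^\epsilon_s(\phi^\epsilon_s)\,ds+\int_0^t u^\epsilon_s(\phi^\epsilon_s)\,ds$, which are absolutely continuous in time, while the only martingale part of $\phi^\epsilon$ is $\sqrt{2\nu}\,w_t$, independent of $W^k$. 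Hence $[\sigma_k(\phi^\epsilon_\cdot(x)),W^k]\equiv 0$, and your proposed conversion produces no corrector at all. The corrector in the statement (the one matching the limit equation \eqref{eq:char_intro}) is not a quadratic-variation effect of the $\epsilon$-level flow: it must be extracted, Wong--Zakai style, from the iterated integral $\sum_{k,h}\int\!\int \nabla\sigma_k(\phi^\epsilon_r)\cdot\sigma_h(\phi^\epsilon_r)\,\eta^{\epsilon,h}_r\,dr\,\eta^{\epsilon,k}_s\,ds$ that appears when one expands $\sigma_k(\phi^\epsilon_s)$ along the flow and uses $u^\epsilon=\sum_h\sigma_h\eta^{\epsilon,h}+z^\epsilon$, together with the fact that $\mathbb{E}\bigl[\epsilon\,\eta^{\epsilon,k}_s\eta^{\epsilon,h}_s\bigr]$ is approximately $\tfrac12\delta_{k,h}$.

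The second, related gap is quantitative: turning $\int \epsilon\,\eta^{\epsilon,k}_s\eta^{\epsilon,h}_s\,f_s\,ds\to\tfrac12\delta_{k,h}\int f_s\,ds$ into a rate cannot be done by ``keeping careful track of cancellations'' with a time-varying coefficient $f_s=\nabla\sigma_k(\phi^\epsilon_s)\cdot\sigma_h(\phi^\epsilon_s)$. The paper's proof discretizes $[0,T]$ with mesh $\delta$, freezes the coefficients at the grid points $n\delta$, applies It\^o's formula to $\eta^{\epsilon,k}\eta^{\epsilon,h}$ on each cell to identify $\int_{n\delta}^{(n+1)\delta}\epsilon\,\eta^{\epsilon,k}\eta^{\epsilon,h}\,ds$ up to boundary and martingale remainders, and controls the freezing error via increment estimates on $\phi^\epsilon$ (\autoref{lem:phi_eps_incr}, \autoref{lem:phi_eps_incr_bis}) and the logarithmic bound on $\zeta^\epsilon$ (\autoref{lem:zeta_log}). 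The exponent $1/42$ then comes from optimizing over the mesh, $\delta=\epsilon^{16/21}\log^{-4/21}(1+\epsilon^{-1})$ with $\alpha=3/8$, not from interpolating the moment bounds \eqref{eq:OU} and \eqref{eq:est_theta}. Your identity $\eta^{\epsilon,k}_s\,ds=dW^k_s-\epsilon\,d\eta^{\epsilon,k}_s$ and the $\epsilon^{1/2}\log^{1/2}(1+\epsilon^{-1})$ boundary-term estimate are correct and do appear in the proof (they correspond to the terms $I^\epsilon_5$ and $I^\epsilon_6$), but without the discretization-and-freezing device, or an equivalent substitute, the core of the argument is missing.
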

In \cite[Section 4]{FlPa21} a similar estimate was proven along the way, using a considerable amount of auxiliary lemmas and computations. In view of this, here we refrain from going again into full detail, and the proof of \autoref{prop:old} will only be sketched.

On the other hand, the nonlinear term in \eqref{eq:euler_intro} produces a new term in the equation of characteristcs, that was absent in \cite{FlPa21}.
Although the final results is not affected by this new term, it is not trivial to actually prove so.
We need the following: 
\begin{prop} \label{prop:sup_z}
Assume (A1)-(A5). Then:
\begin{align*}
\hE{\sup_{t \in [0,T]}\left\|\int_0^t
z^\epsilon_s(\phi^\epsilon_s(\cdot)) ds\right\|_{L^\infty(\T,\R^2)}}
\lesssim
\epsilon^{1/12} \log^{11/12}(1+\epsilon^{-1}).
\end{align*}
\end{prop}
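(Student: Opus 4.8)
The plan is to estimate the time integral of $z^\epsilon_s = K\ast\zeta^\epsilon_s$ along the characteristics $\phi^\epsilon$ by exploiting the Lagrangian representation \eqref{eq:zeta_psi} for $\zeta^\epsilon$, which exhibits $\zeta^\epsilon_s(\psi^\epsilon_s(x))$ as a damped time-integral of $(v^\epsilon+u^\epsilon)\cdot\nabla\theta^\epsilon$ against the exponential kernel $e^{-\epsilon^{-1}(s-r)}$. Since $\psi^\epsilon$ is measure-preserving, $\|\zeta^\epsilon_s\|_{L^p(\T)}$ is controlled by the $L^p$-norm of $(v^\epsilon_r+u^\epsilon_r)\cdot\nabla\theta^\epsilon_r$ integrated against the kernel, and the kernel has total mass $1-e^{-\epsilon^{-1}s}\le 1$; using (A2) to bound $v^\epsilon$ in $L^\infty$, the estimate \eqref{eq:K}--Biot-Savart bounds to control $u^\epsilon=K\ast\xi^\epsilon$ (with $\|\xi^\epsilon\|$ itself bounded via $\|\theta^\epsilon\|+\|\zeta^\epsilon\|$ and a Gronwall/bootstrap over the $\epsilon^{-1}$-damping), and \eqref{eq:est_theta} for $\|\nabla\theta^\epsilon\|_{L^\infty}$, one obtains $\hE{\sup_s\|\zeta^\epsilon_s\|_{L^p(\T)}^p}\lesssim \epsilon^{-p/2}\log^{p/2}(1+\epsilon^{-1})$, i.e. $\zeta^\epsilon$ is of the same order as $\theta^\epsilon$ individually. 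Then $\|z^\epsilon_s\|_{L^\infty}$ is controlled by Sobolev embedding / the Biot-Savart gain of one derivative from $\|\zeta^\epsilon_s\|_{L^p}$ with $p>2$. But a crude $\int_0^t\|z^\epsilon_s\|_{L^\infty}ds\lesssim \epsilon^{-1/2}\log^{1/2}$ bound is useless, as it blows up; the whole point is that there is cancellation in time.

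The key mechanism for the gain is the following: substitute \eqref{eq:zeta_psi} into $z^\epsilon_t(\phi^\epsilon_t(x))$ and change the order of integration in the resulting double time-integral. Writing schematically $z^\epsilon_s \approx K\ast\big(\int_0^s e^{-\epsilon^{-1}(s-r)}(\cdots)_r\,dr\big)$, the $ds$-integral of the exponential kernel contributes a factor $\epsilon$ (i.e. $\int_r^t e^{-\epsilon^{-1}(s-r)}ds \le \epsilon$), which beats the $\epsilon^{-1}$ hidden in the $\theta^\epsilon$-bounds modulo the $\log$ and the fractional power one loses by Hölder-interpolating between $L^p$ spaces and by trading $L^\infty$ for $L^p$ with $p$ large but finite. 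This is exactly where assumption (A5), $\sum_k (K\ast\varsigma_k)\cdot\nabla\varsigma_k=0$, enters: the dangerous contribution comes from the self-interaction part of $(v^\epsilon+u^\epsilon)\cdot\nabla\theta^\epsilon$ in which $u^\epsilon$ is replaced by its "linear part" $K\ast\theta^\epsilon=\sum_k\sigma_k\eta^{\epsilon,k}$, producing terms of the form $\sum_{j,k}(\sigma_j\cdot\nabla\varsigma_k)\eta^{\epsilon,j}\eta^{\epsilon,k}$; the diagonal $j=k$ part has size $\eta^{\epsilon,k}\eta^{\epsilon,k}\sim\epsilon^{-1}\log$ in expectation and does \emph{not} improve upon naive integration in time, so one genuinely needs (A5) to kill $\sum_k\sigma_k\cdot\nabla\varsigma_k$ pointwise and leave only off-diagonal terms $\eta^{\epsilon,j}\eta^{\epsilon,k}$, $j\ne k$, which average out. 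I would isolate this diagonal piece explicitly, dispatch it via (A5), and handle the remainder — off-diagonal products of Ornstein--Uhlenbeck processes, and cross terms involving $v^\epsilon$ or $z^\epsilon$ (the truly nonlinear feedback) — by direct $L^2(\hat\Omega)$ moment bounds using independence of the $W^k$, together with \eqref{eq:OU} and (A3) to sum over $k$.

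Concretely the steps are: (1) record the Lagrangian/Biot-Savart a priori bound $\hE{\sup_{s\le T}\|\zeta^\epsilon_s\|_{L^p(\T)}^{2}}\lesssim \epsilon^{-1}\log(1+\epsilon^{-1})$ for each finite $p$, via \eqref{eq:zeta_psi}, \eqref{eq:est_theta}, (A2), the Biot--Savart estimates, and a Gronwall closure for $u^\epsilon$; (2) plug \eqref{eq:zeta_psi} into $\int_0^t z^\epsilon_s(\phi^\epsilon_s(\cdot))\,ds$, Fubini in $(s,r)$, and extract the factor $\epsilon$ from the exponential kernel; (3) split $(v^\epsilon+u^\epsilon)\cdot\nabla\theta^\epsilon = v^\epsilon\cdot\nabla\theta^\epsilon + z^\epsilon\cdot\nabla\theta^\epsilon + (K\ast\theta^\epsilon)\cdot\nabla\theta^\epsilon$, where the last term is the sum over $j,k$ of $(\sigma_j\cdot\nabla\varsigma_k)\eta^{\epsilon,j}\eta^{\epsilon,k}$; use (A5) to remove the $j=k$ diagonal, and estimate the off-diagonal sum in $L^2(\hat\Omega)$ using independence ($\hE{\eta^{\epsilon,j}\eta^{\epsilon,k}}=0$ for $j\ne k$) and \eqref{eq:OU} to control the fluctuations; (4) the $v^\epsilon\cdot\nabla\theta^\epsilon$ term is handled by (A2) plus \eqref{eq:est_theta}, and the $z^\epsilon\cdot\nabla\theta^\epsilon$ term by the bound from step (1) — this last produces a self-consistent inequality that closes since it carries the small prefactor $\epsilon$; (5) finally convert the $\sup_t$ inside the expectation using a maximal/Burkholder argument for the (small) martingale part and monotone time-integration for the bounded-variation part, and collect powers: the $\epsilon$ from the kernel, against $\epsilon^{-1/2}\log^{1/2}$ from each $\theta^\epsilon$-factor, Hölder losses, and the maximal-inequality losses, yielding the stated $\epsilon^{1/12}\log^{11/12}(1+\epsilon^{-1})$. \textbf{The main obstacle} is step (3)--(4): tracking the diagonal/off-diagonal decomposition of the quadratic Ornstein--Uhlenbeck interaction carefully enough to see that (A5) exactly cancels the non-vanishing contribution, while bookkeeping the fractional exponents through the several Hölder interpolations so that the final exponent stays positive; the feedback term $z^\epsilon\cdot\nabla\theta^\epsilon$ requires the a priori bound of step (1) to be in place first, so the logical ordering matters.
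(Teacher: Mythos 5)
Your overall strategy is the right one and matches the paper's: extract a factor $\epsilon$ from integrating the exponential kernel in time, isolate the quadratic Ornstein--Uhlenbeck self-interaction $(K\ast\theta^\epsilon)\cdot\nabla\theta^\epsilon=\sum_{k,h}(\sigma_k\cdot\nabla\varsigma_h)\,\eta^{\epsilon,k}\eta^{\epsilon,h}$ as the only term not already $o(1)$ after that gain, and use (A5) to cancel its non-vanishing contribution. But there are two concrete gaps. First, your a priori bound in step (1), $\hE{\sup_s\|\zeta^\epsilon_s\|^2_{L^p}}\lesssim\epsilon^{-1}\log(1+\epsilon^{-1})$ (``$\zeta^\epsilon$ of the same order as $\theta^\epsilon$''), is too weak to close step (4). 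The feedback term then contributes
\begin{equation*}
\epsilon\cdot\|\zeta^\epsilon\|_{L^\infty}\cdot\|\nabla\theta^\epsilon\|_{L^\infty}\sim \epsilon\cdot\epsilon^{-1/2}\log^{1/2}(1+\epsilon^{-1})\cdot\epsilon^{-1/2}\log^{1/2}(1+\epsilon^{-1})=\log(1+\epsilon^{-1}),
\end{equation*}
which does not vanish, so the ``self-consistent inequality'' you invoke does not close at this level. The paper's \autoref{lem:zeta_log} proves the much stronger bound $\E{\sup_t\|\zeta^\epsilon_t\|^p_{L^\infty}}\lesssim\log^p(1+\epsilon^{-1})$ precisely by iterating the bootstrap one more time than you do (crude bound $\epsilon^{-p}$, then $\epsilon^{-p/2}\log^{p/2}$, then $\log^p$); with that bound the feedback term becomes $\epsilon\cdot\log\cdot\epsilon^{-1/2}\log^{1/2}=\epsilon^{1/2}\log^{3/2}$ and everything works. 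You need that extra iteration.

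Second, your treatment of the quadratic OU terms is underspecified in a way that hides the real work. (A5) does not ``remove the $j=k$ diagonal'': $\sum_k\Theta_{k,k}(x)(\eta^{\epsilon,k}_s)^2$ is not zero under (A5) because the random weights $(\eta^{\epsilon,k}_s)^2$ differ across $k$; only the common deterministic part (the It\=o bracket $\tfrac{\epsilon^{-2}}{2}\delta_{k,h}\int_0^te^{-\epsilon^{-1}(t-s)}ds$, i.e.\ the mean) factors out of the $k$-sum and is killed by (A5). The centered diagonal fluctuations survive and must be estimated together with the off-diagonal terms. Moreover, ``$\hE{\eta^{\epsilon,j}\eta^{\epsilon,k}}=0$ for $j\ne k$'' cannot be used directly, because in $\int_0^t (K\ast\Theta_{k,h})(\phi^\epsilon_s(x))\,e^{-\epsilon^{-1}(t-s)}\eta^{\epsilon,k}_s\eta^{\epsilon,h}_s\,ds$ the coefficient is an adapted functional of the same noise and is correlated with the product; a crude modulus bound returns $\epsilon^{-1}\log$, losing the gain entirely. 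The paper resolves this by applying It\=o's formula to $\eta^{\epsilon,k}\eta^{\epsilon,h}$ to rewrite the weighted time integral as boundary terms of size $\epsilon$ plus genuine stochastic integrals, and by freezing the coefficient on a mesh $\phi^\epsilon_{n\delta}$ so that those stochastic integrals become honest martingale increments amenable to the It\=o isometry (with (A5) invoked a second time to kill the new $\tfrac{\delta}{4}\delta_{k,h}$ bracket term). Without this discretize-and-freeze step, plus the flow-increment estimates controlling the replacement of $\phi^\epsilon_s$ by $\phi^\epsilon_{n\delta}$ (and of $\psi^\epsilon_{t,s}(x)$ by $x$ in the Lagrangian formula), the ``averaging out'' you appeal to is not justified, and the exponent $1/12$ --- which comes from two successive optimizations over the mesh --- cannot be recovered.
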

This constitutes the main novelty with respect to \cite{FlPa21}.
The proof of \autoref{prop:sup_z} relies strongly on assumption (A5) and the following It\=o Formulas, yielding for every fixed $t\in [0,T]$ and $k,h \in \N$:
\begin{align*}
\eta^{\epsilon,k}_t \eta^{\epsilon,h}_t
&= 
e^{-\epsilon^{-1} t }\eta^{\epsilon,k}_0 \eta^{\epsilon,h}_0
-
\epsilon^{-1} \int_0^t e^{-\epsilon^{-1}(t-s)}
\eta^{\epsilon,k}_s \eta^{\epsilon,h}_s ds
\\
&\quad
+\epsilon^{-1} \int_0^t e^{-\epsilon^{-1}(t-s)} \eta^{\epsilon,k}_s dW^h_s
+\epsilon^{-1} \int_0^t e^{-\epsilon^{-1}(t-s)} \eta^{\epsilon,h}_s dW^k_s
\\
&\quad
+\delta_{k,h} \frac{\epsilon^{-2}}{2} \int_0^t e^{-\epsilon^{-1}(t-s)} ds,
\\
\eta^{\epsilon,k}_t \eta^{\epsilon,h}_t
&=
\eta^{\epsilon,k}_0 \eta^{\epsilon,h}_0
-
2\epsilon^{-1} \int_0^t 
\eta^{\epsilon,k}_s \eta^{\epsilon,h}_s ds
\\
&\quad
+\epsilon^{-1} \int_0^t  \eta^{\epsilon,k}_s dW^h_s
+\epsilon^{-1} \int_0^t  \eta^{\epsilon,h}_s dW^k_s
+\delta_{k,h} \frac{\epsilon^{-2} t}{2} ,
\end{align*}
with $\delta_{k,h}$ being the Kronecker delta function, allowing to control the time integral of quadratics $\eta^{\epsilon,k}_s \eta^{\epsilon,h}_s$.

\subsection{Proof of \autoref{prop:old}}
 
In this paragraph we recall the argument contained in \cite{FlPa21}. 
Roughly speaking, \autoref{prop:old} is a sort of Wong-Zakai result for the Ornstein-Uhlenbeck process $\eta^{\epsilon,k}$ converging to a white-in-time noise, that is the formal time derivative of the Wiener process $W^k$.
We need to exploit a discretization of \eqref{eq:char_eps_intro} to show the closeness, in a certain sense to be specified, between the Stratonovich-to-It\=o corrector $c:\mathbb{T}^2 \to \mathbb{R}^2$, given by:
\begin{align*}
c(x)=\frac12 \sum_{k \in \mathbb{N}} \nabla\sigma_k(x) \cdot \sigma_k(x),
\quad
x \in \mathbb{T}^2,
\end{align*} 
coming from the stochastic integral, and the iterated time integral of the Ornstein-Uhlenbeck process.

In order to discretize the problem, for every $\epsilon>0$ take a mesh $\delta>0$ such that $T/\delta$ is an integer.
For any $n=0,\dots,T/\delta-1$ and fixed $x \in \T$, consider the following decomposition:
\begin{align*}
\sum_{k \in \mathbb{N}} \int_{n\delta}^{(n+1)\delta} \sigma_k(\phi^\epsilon_s(x)) \eta^{\epsilon,k}_s ds 
&=
\sum_{k \in \mathbb{N}}\int_{n\delta}^{(n+1)\delta}
\left(
\int_{n\delta}^{s} 
\nabla\sigma_k(\phi^\epsilon_r(x)) \cdot
v^\epsilon_r(\phi^\epsilon_r(x)) dr
\right)\eta^{\epsilon,k}_s ds 
\\
&\quad+
\sum_{k \in \mathbb{N}}\int_{n\delta}^{(n+1)\delta}
\left(
\int_{n\delta}^{s} 
\nabla\sigma_k(\phi^\epsilon_r(x)) \cdot
z^\epsilon_r(\phi^\epsilon_r(x)) dr
\right)\eta^{\epsilon,k}_s ds 
\\
&\quad+
\sum_{k,h \in \mathbb{N}}\int_{n\delta}^{(n+1)\delta}
\left(
\int_{n\delta}^{s} 
\nabla\sigma_k(\phi^\epsilon_r(x)) \cdot
\sigma_h(\phi^\epsilon_r(x)) \eta^{\epsilon,h}_r dr
\right)\eta^{\epsilon,k}_s ds 
\\
&\quad+
\sum_{k \in \mathbb{N}}\int_{n\delta}^{(n+1)\delta}
\left(
\int_{n\delta}^{s} 
\nabla\sigma_k(\phi^\epsilon_r(x)) \cdot
\sqrt{2\nu} dw_r
\right)\eta^{\epsilon,k}_s ds 
\\
&\quad+
\sum_{k \in \mathbb{N}}\int_{n\delta}^{(n+1)\delta}  \sigma_k(\phi^\epsilon_{n\delta}(x))
dW^k_s 
\\
&\quad-
\sum_{k \in \mathbb{N}} \int_{n\delta}^{(n+1)\delta} \sigma_k(\phi^\epsilon_{n\delta}(x))
\epsilon d\eta^{\epsilon,k}_s  \\
\eqqcolon&\, I^\epsilon_1(n) + I^\epsilon_2(n) + I^\epsilon_3(n) + I^\epsilon_4(n) + I^\epsilon_5(n) + I^\epsilon_6(n).
\end{align*}

Regarding the Stratonovich integral, we can rewrite:
\begin{align*}
\sum_{k\in \mathbb{N}} \int_{n\delta}^{(n+1)\delta}\sigma_k(\phi^\epsilon_s(x)) \circ dW^k_s 
=
&\sum_{k\in \mathbb{N}}\int_{n\delta}^{(n+1)\delta}
 \left(\sigma_k(\phi^\epsilon_s(x))-\sigma_k(\phi^\epsilon_{n\delta}(x))\right) dW^k_s \\
&+
\sum_{k\in \mathbb{N}}\int_{n\delta}^{(n+1)\delta}
 \sigma_k(\phi^\epsilon_{n\delta}(x)) 
dW^k_s \\
&+
\int_{n\delta}^{(n+1)\delta}
\left( c(\phi^\epsilon_s(x))- c(\phi^\epsilon_{n\delta}(x))  \right) ds \\
&+
\int_{n\delta}^{(n+1)\delta}
c(\phi^\epsilon_{n\delta}(x)) ds \\
\eqqcolon&\, J^\epsilon_1(n) + J^\epsilon_2(n) + J^\epsilon_3(n) + J^\epsilon_4(n).
\end{align*}

The ingredients for the proof of \autoref{prop:old} are:
\begin{itemize}
\item
a good estimate on $\hE{\sup_{t \in [0,T]} |z^\epsilon_t(\phi^\epsilon_t(x))|}$ (cfr. \autoref{lem:zeta_log}), needed to control $I^\epsilon_2(n)$;
\item
a good estimate on $\hE{\sup_{\tau \leq \delta} |\phi^\epsilon_{\tau + n\delta}(x)-\phi^\epsilon_{n\delta}(x)|}$ (cfr. \autoref{lem:phi_eps_incr}),
needed to approximate $I^\epsilon_3(n)$ with 
\begin{align} \label{eq:I_3approx}
\sum_{k,h \in \mathbb{N}}
\nabla\sigma_k(\phi_{n\delta}(x)) \cdot
\sigma_h(\phi_{n\delta}(x))
\int_{n\delta}^{(n+1)\delta}
 \left(
\int_{n\delta}^{s} 
\eta^{\epsilon,h}_r dr
\right)\eta^{\epsilon,k}_s ds ;
\end{align}
\item
a better estimate on $\hE{|\phi^\epsilon_{(n+1)\delta}(x)-\phi^\epsilon_{n\delta}(x)|}$ (cfr. \autoref{lem:phi_eps_incr_bis}), needed to control $I^\epsilon_6(n)$ with a discrete integration by parts.
\end{itemize}

Notice that $I^\epsilon_5(n) = J^\epsilon_2(n)$, and the expression in \eqref{eq:I_3approx}, that approximates $I^\epsilon_3(n)$, must be compensated by subtracting $J^\epsilon_4(n)$.

\begin{lem} \label{lem:zeta_log}
Assume (A1)-(A3). Then for every fixed $p\geq 1$ it holds
\begin{align*}
\E{\sup_{t \in [0,T]} \|\zeta^\epsilon_t\|^p_{L^\infty(\T)} } \lesssim \log^p(1+\epsilon^{-1}). 
\end{align*}
In particular, since $z^\epsilon_t = K \ast \zeta^\epsilon_t$ we alse have
\begin{align*}
\E{\sup_{t \in [0,T]} \|z^\epsilon_t\|^p_{L^\infty(\T)} } \lesssim \log^p(1+\epsilon^{-1}). 
\end{align*}
\end{lem}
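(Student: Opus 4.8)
\textbf{Proof proposal for \autoref{lem:zeta_log}.}

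The plan is to exploit the explicit representation \eqref{eq:zeta_psi} for $\zeta^\epsilon$ along the Euler characteristics $\psi^\epsilon$. Since $\psi^\epsilon_t$ is a measure-preserving homeomorphism, bounds on $\|\zeta^\epsilon_t\|_{L^\infty(\T)}$ are equivalent to bounds on $\sup_{x \in \T}|\zeta^\epsilon_t(\psi^\epsilon_t(x))|$. First I would take absolute values in \eqref{eq:zeta_psi}, pull the supremum in $x$ inside, and use assumption (A2) (boundedness of $v^\epsilon$) together with \eqref{eq:K} to bound $|u^\epsilon_s| = |K \ast \xi^\epsilon_s|$; however the latter requires a bound on $\|\xi^\epsilon_s\|_{L^\infty}$, which is precisely the quantity I am trying to control through $\zeta^\epsilon = \xi^\epsilon - \theta^\epsilon$. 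So it is cleaner to keep the product $(v^\epsilon_s + u^\epsilon_s)\cdot \nabla\theta^\epsilon_s$ together and dominate it crudely. The key observation is the dissipative kernel: $\int_0^t e^{-\epsilon^{-1}(t-s)}\, ds \leq \epsilon$, so the $\epsilon^{-1}$ in the exponent is harmless once integrated, and
\begin{align*}
\sup_{x \in \T}|\zeta^\epsilon_t(\psi^\epsilon_t(x))|
\lesssim
\sup_{s \in [0,T]}\left(\|v^\epsilon_s\|_{L^\infty} + \|u^\epsilon_s\|_{L^\infty}\right)
\cdot \epsilon \sup_{s \in [0,T]}\|\nabla\theta^\epsilon_s\|_{L^\infty}.
\end{align*}

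Next I would handle the two factors on the right. The factor $\epsilon \sup_s \|\nabla\theta^\epsilon_s\|_{L^\infty}$ is controlled by \eqref{eq:est_theta}: raising to the $p$-th power and taking expectation gives $\E{\epsilon^p \sup_s \|\nabla\theta^\epsilon_s\|_{L^\infty}^p} \lesssim \epsilon^p \cdot \epsilon^{-p/2}\log^{p/2}(1+\epsilon^{-1}) = \epsilon^{p/2}\log^{p/2}(1+\epsilon^{-1})$, which is actually much smaller than the claimed bound. For the velocity factor, $\|v^\epsilon_s\|_{L^\infty} \leq C$ by (A2), and $\|u^\epsilon_s\|_{L^\infty} = \|K \ast \xi^\epsilon_s\|_{L^\infty} \leq \|K\|_{L^1}\|\xi^\epsilon_s\|_{L^\infty}$ by \eqref{eq:K}. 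Now from \eqref{eq:xi_lagr} and measure-preservation of $\psi^\epsilon$, $\|\xi^\epsilon_s\|_{L^\infty} \leq \epsilon^{-1}\sum_k \|\varsigma_k\|_{L^\infty}\sup_s|\eta^{\epsilon,k}_s|$, whence by \eqref{eq:OU} and (A3) one gets $\E{\sup_s\|\xi^\epsilon_s\|_{L^\infty}^p} \lesssim \epsilon^{-p}\cdot \epsilon^{-p/2}\log^{p/2}(1+\epsilon^{-1})$, i.e. $\sup_s\|\xi^\epsilon_s\|_{L^\infty}$ is of order $\epsilon^{-3/2}\log^{1/2}$ in $L^p$. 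Combining via H\"older (splitting the $p$-th power as a product and using the uniform-in-$k$ moment bounds), the velocity factor times the $\theta^\epsilon$-factor gives something of order $\epsilon^{-3/2}\log^{1/2}\cdot \epsilon^{1/2}\log^{1/2} = \epsilon^{-1}\log$, which is \emph{worse} than the claimed $\log^p(1+\epsilon^{-1})$.

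This mismatch is the main obstacle, and it signals that the crude bound $\|u^\epsilon_s\|_{L^\infty} \lesssim \|\xi^\epsilon_s\|_{L^\infty} \lesssim \epsilon^{-3/2}$ must be replaced by a self-improving / bootstrap argument. The right approach is a Gronwall loop: write $\zeta^\epsilon = \xi^\epsilon - \theta^\epsilon$, so $u^\epsilon = K\ast\xi^\epsilon = K\ast\theta^\epsilon + z^\epsilon$ with $z^\epsilon = K\ast\zeta^\epsilon$; then in \eqref{eq:zeta_psi} the velocity splits as $v^\epsilon_s + K\ast\theta^\epsilon_s + z^\epsilon_s$, and $\|z^\epsilon_s\|_{L^\infty} \leq \|K\|_{L^1}\|\zeta^\epsilon_s\|_{L^\infty}$ feeds back into the estimate:
\begin{align*}
\sup_{x}|\zeta^\epsilon_t(\psi^\epsilon_t(x))|
\lesssim
\int_0^t e^{-\epsilon^{-1}(t-s)}\left(C + \|K\ast\theta^\epsilon_s\|_{L^\infty} + \|\zeta^\epsilon_s\|_{L^\infty}\right)\|\nabla\theta^\epsilon_s\|_{L^\infty}\, ds.
\end{align*}
Using $\|K\ast\theta^\epsilon_s\|_{L^\infty} \lesssim \|\theta^\epsilon_s\|_{L^\infty} \lesssim \epsilon^{-1/2}\log^{1/2}$ (again from \eqref{eq:est_theta}), the first two terms contribute, after multiplying by the dissipative factor, something like $\epsilon \cdot \epsilon^{-1/2}\log^{1/2}\cdot\epsilon^{-1/2}\log^{1/2} = \log$; and the $\|\zeta^\epsilon_s\|_{L^\infty}$ term produces, via the kernel $\int_0^t \epsilon^{-1}e^{-\epsilon^{-1}(t-s)}\,(\cdot)\,ds$ dressed with $\epsilon\|\nabla\theta^\epsilon_s\|_{L^\infty} \lesssim \epsilon^{1/2}\log^{1/2}\to 0$, a Gronwall-type contraction. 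Setting $a_t := \sup_s\|\zeta^\epsilon_s\|_{L^\infty}$ pathwise, one obtains (on an event of full probability, using $\sup_s\|\nabla\theta^\epsilon_s\|_{L^\infty} \lesssim \epsilon^{-1/2}\log^{1/2}$ which holds in $L^p$ for all $p$ hence yields the needed pointwise control after passing through moments) an inequality of the form $a_t \lesssim \log(1+\epsilon^{-1}) + C_\epsilon a_t$ with $C_\epsilon \to 0$, hence $a_t \lesssim \log(1+\epsilon^{-1})$. Taking $p$-th powers and expectations, and being slightly more careful to keep the randomness inside the moments via H\"older's inequality (so that the $\log^{1/2}$ factors combine correctly across the product of the velocity bound and the $\theta^\epsilon$-derivative bound), delivers $\E{\sup_t\|\zeta^\epsilon_t\|_{L^\infty}^p} \lesssim \log^p(1+\epsilon^{-1})$. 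The second claim, $\E{\sup_t\|z^\epsilon_t\|_{L^\infty}^p} \lesssim \log^p(1+\epsilon^{-1})$, then follows immediately from $z^\epsilon_t = K\ast\zeta^\epsilon_t$ and \eqref{eq:K} with $p = \infty$ on the kernel side. The delicate point throughout is to carry the expectations through the nonlinear feedback without losing the self-improvement — i.e. one should run the Gronwall argument pathwise in $\omega$ first (it is legitimate since all quantities are finite a.s.), and only take moments at the very end.
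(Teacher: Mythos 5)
Your overall strategy---split $u^\epsilon = K\ast\theta^\epsilon + z^\epsilon$ inside \eqref{eq:zeta_psi} so that $\|\zeta^\epsilon_s\|_{L^\infty(\T)}\|\nabla\theta^\epsilon_s\|_{L^\infty(\T)}$ feeds back into the estimate, and exploit $\int_0^t e^{-\epsilon^{-1}(t-s)}ds\leq\epsilon$---is exactly the paper's. The gap is in how you close the loop. Your absorption step reads $a\lesssim\log(1+\epsilon^{-1})+C_\epsilon a$ with $C_\epsilon\simeq\epsilon\sup_s\|\nabla\theta^\epsilon_s\|_{L^\infty(\T)}$; but $C_\epsilon$ is a \emph{random variable} which is of order $\epsilon^{1/2}\log^{1/2}(1+\epsilon^{-1})$ only in $L^p(\Omega)$, not almost surely small, so the pathwise conclusion $a\lesssim\log(1+\epsilon^{-1})$ does not follow. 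On the event $\{C_\epsilon\geq 1/2\}$ the inequality carries no information, and to control the contribution of that event to $\E{a^p}$ you need an a priori moment bound on $a=\sup_t\|\zeta^\epsilon_t\|_{L^\infty(\T)}$, which your write-up never establishes (mere a.s.\ finiteness is not enough). The paper sidesteps pathwise absorption entirely: it first proves the loose bound $\E{\sup_t\|\zeta^\epsilon_t\|^p_{L^\infty(\T)}}\lesssim\epsilon^{-p}$, then iterates the recursive inequality at the level of moments, applying H\"older between $\sup_s\|\zeta^\epsilon_s\|_{L^\infty(\T)}$ and $\epsilon\sup_s\|\nabla\theta^\epsilon_s\|_{L^\infty(\T)}$ so that each pass improves the exponent by $\epsilon^{1/2}\log^{1/2}(1+\epsilon^{-1})$; two passes give $\log^p(1+\epsilon^{-1})$.

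The second, related gap is that your candidate a priori bound on $\xi^\epsilon$ is derived incorrectly: from \eqref{eq:xi_lagr} you cannot conclude $\|\xi^\epsilon_s\|_{L^\infty(\T)}\leq\epsilon^{-1}\sum_k\|\varsigma_k\|_{L^\infty(\T)}\sup_s|\eta^{\epsilon,k}_s|$, because the integrand $\varsigma_k(\psi^\epsilon_s(x))$ is random and $s$-dependent and one cannot pull its sup-norm out of the It\=o integral $\int_0^t e^{-\epsilon^{-1}(t-s)}\varsigma_k(\psi^\epsilon_s(x))\,dW^k_s$. The paper's route is to set $M^\epsilon_t(x)=\sum_k\int_0^t\varsigma_k(\psi^\epsilon_s(x))\,dW^k_s$, obtain $\alpha$-H\"older continuity of $M^\epsilon$ with constants bounded in $L^p(\Omega)$ uniformly in $\epsilon$ via Kolmogorov, and integrate by parts in $\epsilon^{-1}\int_0^t e^{-\epsilon^{-1}(t-s)}\,dM^\epsilon_s(x)$ to get $\|\xi^\epsilon_t\|_{L^\infty(\T)}\lesssim\epsilon^{-1}\bigl(\|M^\epsilon_t\|_{L^\infty(\T)}+K_{\epsilon,\alpha}\bigr)$. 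Some argument of this kind is indispensable to seed the bootstrap, and without it your proof does not close.
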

\begin{proof}
We prove in the first place the weaker estimate:
\begin{align} \label{eq:est_zeta_loose}
\E{\sup_{t \in [0,T]} \|\zeta^\epsilon_t\|^p_{L^\infty(\T)} } \lesssim \epsilon^{-p}. 
\end{align}
Since $\theta^\epsilon$ satisfies the bound above by \eqref{eq:est_theta}, it suffices to prove it for $\xi^\epsilon$. Denote $M^\epsilon_t(x) = \sum_{k \in \N}
\int_0^t \varsigma_k(\psi^\epsilon_s(x)) dW^k_s$. Since for every $s,t \in [0,T]$
\begin{align*}
\E{\| M^\epsilon_t - M^\epsilon_s \|^4_{L^\infty(\T)}}
\lesssim
\left( \sum_{k \in \N} \| \varsigma_k\|_{L^\infty(\T)}^2 \right)^2 (t-s)^2,
\end{align*}
by (A3) and Kolmogorov continuity Theorem the process $M^\epsilon:\Omega \times [0,T] \to L^\infty(\T)$ has a modification $\tilde{M}^\epsilon$ that is $\alpha$-H\"older continuous for every $\alpha<1/4$, with $\alpha$-H\"older constant $K_{\epsilon,\alpha}$ bounded in $L^p(\Omega)$ for every $p<\infty$ uniformly in $\epsilon$. 
Since $M^\epsilon$ has continuous trajectories, $M^\epsilon_t=\tilde{M}^\epsilon_t$ a.s. as random variables in $L^\infty(\T)$ and 
\begin{align*}
\xi^\epsilon_t(\psi^\epsilon_t(x))
&=
\epsilon^{-1} 
\int_0^t e^{-\epsilon^{-1}(t-s)} dM^\epsilon_s(x)
\\
&=
\epsilon^{-1} 
\int_0^t e^{-\epsilon^{-1}(t-s)} d(M^\epsilon_s(x)-M^\epsilon_t(x))
\\
&=
\epsilon^{-1} \left[ e^{-\epsilon^{-1}(t-s)} 
(M^\epsilon_s(x)-M^\epsilon_t(x)) \right]^{s=t}_{s=0}
\\
&\quad
- \epsilon^{-2} 
\int_0^t e^{-\epsilon^{-1}(t-s)} 
(M^\epsilon_s(x)-M^\epsilon_t(x)) ds.
\end{align*}
Clearly $\|\xi^\epsilon_t\|_{L^\infty(\T)} = \| \xi^\epsilon_t \circ \psi^\epsilon_t \|_{L^\infty(\T)}$, and therefore
\begin{align*}
\| \xi^\epsilon_t \|_{L^\infty(\T)}
&\leq
\epsilon^{-1} e^{-\epsilon^{-1}t} \|M^\epsilon_t\|_{L^\infty(\T)}
+\epsilon^{-1} K_{\epsilon,\alpha},
\end{align*}
and \eqref{eq:est_zeta_loose} follows.

Recalling \eqref{eq:zeta_psi}, the following inequality holds
\begin{align} \label{eq:zeta_iterative}
\|\zeta^\epsilon_t\|_{L^\infty(\T)}
\leq
\int_0^t
e^{-\epsilon^{-1}(t-s)}
\|(v^\epsilon_s + u^\epsilon_s) \cdot\nabla \theta^\epsilon_s \|_{L^\infty(\T)} ds.
\end{align}
Using assumption (A2) and $u^\epsilon_s = K \ast \zeta^\epsilon_s + K \ast \theta^\epsilon_s$ we get
\begin{align*}
\|(v^\epsilon_s + u^\epsilon_s) \cdot\nabla \theta^\epsilon_s \|_{L^\infty(\T)}
&\lesssim
\left(
1 + \| \zeta^\epsilon_s \|_{L^\infty(\T)} + \| \theta^\epsilon_s \|_{L^\infty(\T)} \right)
\| \nabla \theta^\epsilon_s \|_{L^\infty(\T)},
\end{align*}
that can be plugged back into \eqref{eq:zeta_iterative} to produce the recursive estimate
\begin{align*}
\|\zeta^\epsilon_t\|_{L^\infty(\T)}
&\lesssim
\int_0^t e^{-\epsilon^{-1}(t-s)}
\left(
1 + \| \theta^\epsilon_s \|_{L^\infty(\T)} \right)
\| \nabla \theta^\epsilon_s \|_{L^\infty(\T)} ds
\\
&\quad+
\int_0^t e^{-\epsilon^{-1}(t-s)}
\|\zeta^\epsilon_s \|_{L^\infty(\T)}
\| \nabla \theta^\epsilon_s \|_{L^\infty(\T)} ds
\\
&\lesssim
\epsilon 
\left(
\sup_{s \in [0,T]}\| \theta^\epsilon_s \|_{L^\infty(\T)}
+
\sup_{s \in [0,T]}\| \zeta^\epsilon_s \|_{L^\infty(\T)}
\right)
\sup_{s \in [0,T]}\| \nabla \theta^\epsilon_s \|_{L^\infty(\T)}.
\end{align*}
By H\"older inequality and \eqref{eq:est_zeta_loose} we deduce from the previous inequality
\begin{align*}
\E{ \sup_{t \in [0,T]} \|\zeta^\epsilon_t\|^p_{L^\infty(\T)}}
\lesssim
\log^p(1+\epsilon^{-1}) 
+
\epsilon^{-p/2} \log^{p/2}(1+\epsilon^{-1}),
\end{align*}
improving the bound \eqref{eq:est_zeta_loose} itself. Iterating the same argument one more time we obtain the desired estimate.
\end{proof}

\begin{lem} \label{lem:phi_eps_incr}
Assume (A1)-(A3). Then for every fixed $p\geq 1$ and $\alpha \in (0,1/2)$
\begin{equation*}
\hE{\sup_{\substack{t+\tau \leq T \\ \tau \leq \delta}}
\| \phi^\epsilon_{t+\tau} - \phi^\epsilon_t \|^p_{L^{\infty}(\mathbb{T}^2,\mathbb{T}^2)}} 
\lesssim
\delta^p \epsilon^{-p/2} \log^{p/2}(1+\epsilon^{-1}) + \delta^{p\alpha}.
\end{equation*}
\end{lem}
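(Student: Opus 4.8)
The plan is to read the increment of $\phi^\epsilon$ directly off the integral equation \eqref{eq:char_eps_intro}: for $x\in\T$ and $t+\tau\le T$,
\begin{align*}
\phi^\epsilon_{t+\tau}(x)-\phi^\epsilon_t(x)
=\int_t^{t+\tau} v^\epsilon_s(\phi^\epsilon_s(x))\,ds
+\int_t^{t+\tau} u^\epsilon_s(\phi^\epsilon_s(x))\,ds
+\sqrt{2\nu}\,(w_{t+\tau}-w_t),
\end{align*}
where, interpreting the identity on the lift to $\R^2$, the geodesic distance $|\phi^\epsilon_{t+\tau}(x)-\phi^\epsilon_t(x)|$ is dominated by the Euclidean norm of the right-hand side. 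Since $(a+b+c)^p\lesssim a^p+b^p+c^p$ for $p\ge1$, it suffices to estimate the $p$-th moment of the supremum, over $x\in\T$ and over $t+\tau\le T$ with $\tau\le\delta$, of each of the three terms; the supremum is measurable because $\phi^\epsilon$ has continuous trajectories, and all three bounds below are uniform in $x$, so the essential supremum in $x$ is harmless. We may also assume $\delta\le1$, the estimate being otherwise trivial since $\T$ has bounded diameter.

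For the first term, assumption (A2) gives $\|v^\epsilon_s\|_{L^\infty(\T)}\le C$ uniformly in $s$ and $\epsilon$, hence $\sup_x\big|\int_t^{t+\tau}v^\epsilon_s(\phi^\epsilon_s(x))\,ds\big|\le C\tau\le C\delta$, which after raising to the power $p$ is $\lesssim\delta^p\le\delta^{p\alpha}$ since $\alpha<1$ and $\delta\le1$. For the Brownian term, the Kolmogorov continuity theorem provides an $\alpha$-Hölder continuous modification of $w$ on $[0,T]$ whose Hölder seminorm $[w]_\alpha$ has finite moments of every order, and therefore
\begin{align*}
\hE{\sup_{\substack{t+\tau\le T\\ \tau\le\delta}}\big|\sqrt{2\nu}\,(w_{t+\tau}-w_t)\big|^p}
\le (2\nu)^{p/2}\,\hE{[w]_\alpha^p}\,\delta^{p\alpha}
\lesssim\delta^{p\alpha}.
\end{align*}

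The only term carrying the $\epsilon$-dependence is the Biot–Savart one, and here I would use the splitting $u^\epsilon_s=K\ast\xi^\epsilon_s=K\ast\theta^\epsilon_s+z^\epsilon_s$, with $z^\epsilon_s=K\ast\zeta^\epsilon_s$ and $\zeta^\epsilon=\xi^\epsilon-\theta^\epsilon$. By \eqref{eq:K} one has $\|K\ast\theta^\epsilon_s\|_{L^\infty(\T)}\le\|K\|_{L^1}\|\theta^\epsilon_s\|_{L^\infty(\T)}$, so $\|u^\epsilon_s\|_{L^\infty(\T)}\lesssim\|\theta^\epsilon_s\|_{L^\infty(\T)}+\|z^\epsilon_s\|_{L^\infty(\T)}$ and
\begin{align*}
\sup_x\Big|\int_t^{t+\tau}u^\epsilon_s(\phi^\epsilon_s(x))\,ds\Big|
\le\delta\sup_{s\in[0,T]}\|u^\epsilon_s\|_{L^\infty(\T)}
\lesssim\delta\Big(\sup_{s\in[0,T]}\|\theta^\epsilon_s\|_{L^\infty(\T)}+\sup_{s\in[0,T]}\|z^\epsilon_s\|_{L^\infty(\T)}\Big).
\end{align*}
Taking the $p$-th moment and invoking \eqref{eq:est_theta} together with \autoref{lem:zeta_log} gives
\begin{align*}
\hE{\Big(\delta\sup_{s\in[0,T]}\|u^\epsilon_s\|_{L^\infty(\T)}\Big)^p}
\lesssim\delta^p\big(\epsilon^{-p/2}\log^{p/2}(1+\epsilon^{-1})+\log^p(1+\epsilon^{-1})\big)
\lesssim\delta^p\epsilon^{-p/2}\log^{p/2}(1+\epsilon^{-1}),
\end{align*}
the last step because $\log^{p/2}(1+\epsilon^{-1})\le\epsilon^{-p/2}\log^{p/2}(1+\epsilon^{-1})$ for $\epsilon\le1$. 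Summing the three contributions yields the stated estimate.

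I do not expect a genuine obstacle here: the lemma is a routine moment bound for time-increments of the flow, and the decomposition above reduces it to three elementary estimates. The one point deserving care is that the time-integral of $u^\epsilon$ must be controlled through a spatially uniform $L^\infty$ bound on $\|u^\epsilon_s\|_{L^\infty(\T)}$, and the $\epsilon$-rate depends crucially on using the sharpened bound of \autoref{lem:zeta_log} rather than the crude one \eqref{eq:est_zeta_loose}; with the latter one would only obtain $\epsilon^{-p}$ in place of $\epsilon^{-p/2}\log^{p/2}(1+\epsilon^{-1})$, which would be too lossy for the use of this lemma in the proof of \autoref{prop:old}.
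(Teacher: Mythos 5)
Your proof is correct and follows essentially the same route as the paper: the paper also reads the increment off \eqref{eq:char_eps_intro}, splits $u^\epsilon_s$ into the linear part (written there as $\sum_k\sigma_k\eta^{\epsilon,k}_s$, controlled via (A3) and \eqref{eq:OU}, which is the same as your use of \eqref{eq:est_theta}) plus $z^\epsilon_s$ controlled by \autoref{lem:zeta_log}, bounds the $v^\epsilon$ term by (A2), and handles the Brownian term by H\"older continuity of $w$. No substantive difference.
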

\begin{proof}
The increment $\phi^\epsilon_{t+\tau}(x) - \phi^\epsilon_{t}(x)$ can be written as
\begin{align*}
\phi^\epsilon_{t+\tau}(x) - \phi^\epsilon_{t}(x) 
= &\int_{t}^{t+\tau}
v^\epsilon_s(\phi^\epsilon_s(x)) ds +
\sum_{k \in \N} \int_{t}^{t+\tau} 
\sigma_k(\phi^\epsilon_s(x))\eta^{\epsilon,k}_s ds
\\
&+\int_{t}^{t+\tau}
z_s(\phi^\epsilon_s(x)) ds 
+ \sqrt{2\nu} (w_{t+\tau}-w_t),
\end{align*}
therefore, by assumption (A2) we have 
\begin{align*}
\sup_{\substack{t+\tau \leq T}}
\| \phi^\epsilon_{t+\tau} - \phi^\epsilon_t \|_{L^{\infty}(\mathbb{T}^2,\mathbb{T}^2)}
\lesssim\,
&\tau  +
\tau\sum_{k \in \N} \|\sigma_k\|_{L^\infty(\T)}
\sup_{s\in[0,T]} |\eta^{\epsilon,k}_s|
\\
&+ \tau \sup_{s\in[0,T]}\|\zeta^\epsilon_s\|_{L^\infty(\T)}
+ K_\alpha \tau^\alpha,
\end{align*}
where $K_\alpha$ denotes the $\alpha$-H\"older constant of $w$.
The thesis follows easily by (A3), \eqref{eq:OU} and \autoref{lem:zeta_log}.
\end{proof}

\begin{lem} \label{lem:phi_eps_incr_bis}
Assume (A1)-(A3). Then for every fixed $p\geq 1$ we have, uniformly in $n=0,\dots,T/\delta -1$:
\begin{align*}
\hE{ \| \phi^\epsilon_{(n+1)\delta} - \phi^\epsilon_{n\delta} \|^p_{L^{\infty}(\mathbb{T}^2,\mathbb{T}^2)}}
&\lesssim
\delta^{2p} \epsilon^{-p} \log^p(1+\epsilon^{-1})
\\
&\quad+
\delta^{p(1+\alpha)} \epsilon^{-p/2} \log^{p/2}(1+\epsilon^{-1}) 
\\
&\quad+ 
\delta^{p/2} + \epsilon^{p/2} \log^{p/2}(1+\epsilon^{-1}).
\end{align*}
\end{lem}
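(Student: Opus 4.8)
The plan is to split the increment, using \eqref{eq:char_eps_intro} together with $u^\epsilon_s = K\ast\xi^\epsilon_s = K\ast\theta^\epsilon_s + z^\epsilon_s$ and $\theta^\epsilon_s = \sum_{k}\varsigma_k\eta^{\epsilon,k}_s$ (so that $(K\ast\theta^\epsilon_s)(\phi^\epsilon_s(x)) = \sum_k\sigma_k(\phi^\epsilon_s(x))\eta^{\epsilon,k}_s$), as
\begin{align*}
\phi^\epsilon_{(n+1)\delta}(x) - \phi^\epsilon_{n\delta}(x)
&= \int_{n\delta}^{(n+1)\delta} v^\epsilon_s(\phi^\epsilon_s(x))\,ds
+ \sum_{k\in\N}\int_{n\delta}^{(n+1)\delta}\sigma_k(\phi^\epsilon_s(x))\,\eta^{\epsilon,k}_s\,ds \\
&\quad + \int_{n\delta}^{(n+1)\delta} z^\epsilon_s(\phi^\epsilon_s(x))\,ds
+ \sqrt{2\nu}\,(w_{(n+1)\delta}-w_{n\delta}),
\end{align*}
and to estimate the four terms separately in $L^p(\hat{\Omega};L^\infty(\T,\T))$, using $(a+b+c+d)^p\lesssim a^p+b^p+c^p+d^p$. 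The first term is $\le C\delta$ pointwise by (A2), hence it contributes $\delta^p\lesssim\delta^{p/2}$; the fourth contributes $\delta^{p/2}$ by the Gaussian scaling of Brownian increments (it does not depend on $x$). The third term is $\le\delta\sup_{s\in[0,T]}\|z^\epsilon_s\|_{L^\infty(\T)}$ pointwise, so by \autoref{lem:zeta_log} it contributes $\delta^p\log^p(1+\epsilon^{-1})$, which, using the elementary bounds $\delta\log(1+\epsilon^{-1})\le\tfrac12\delta^2\epsilon^{-1}\log(1+\epsilon^{-1})+\tfrac12\epsilon\log(1+\epsilon^{-1})$ and $\epsilon\log(1+\epsilon^{-1})\le\epsilon^{1/2}\log^{1/2}(1+\epsilon^{-1})$ (the latter for $\epsilon$ small), is $\lesssim\delta^{2p}\epsilon^{-p}\log^p(1+\epsilon^{-1})+\epsilon^{p/2}\log^{p/2}(1+\epsilon^{-1})$. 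All the real work is therefore in the second, noise-driven term.

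For the second term I would freeze the coefficient at the left endpoint of the mesh interval, $\sigma_k(\phi^\epsilon_s(x)) = \sigma_k(\phi^\epsilon_{n\delta}(x)) + (\sigma_k(\phi^\epsilon_s(x)) - \sigma_k(\phi^\epsilon_{n\delta}(x)))$. The key point for the frozen part is that, since $\eta^{\epsilon,k}$ solves $\epsilon\,d\eta^{\epsilon,k}_t = -\eta^{\epsilon,k}_t\,dt + dW^k_t$, one has the exact identity
\begin{align*}
\int_{n\delta}^{(n+1)\delta}\eta^{\epsilon,k}_s\,ds = \big(W^k_{(n+1)\delta}-W^k_{n\delta}\big) - \epsilon\,\big(\eta^{\epsilon,k}_{(n+1)\delta}-\eta^{\epsilon,k}_{n\delta}\big),
\end{align*}
so the frozen part equals $\sum_k\sigma_k(\phi^\epsilon_{n\delta}(x))(W^k_{(n+1)\delta}-W^k_{n\delta}) - \epsilon\sum_k\sigma_k(\phi^\epsilon_{n\delta}(x))(\eta^{\epsilon,k}_{(n+1)\delta}-\eta^{\epsilon,k}_{n\delta})$. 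Taking the essential supremum over $x$ inside the sums and then Minkowski's inequality in $\ell^1(\N)$ and in $L^p(\hat{\Omega})$, the first of these pieces has $L^p(\hat{\Omega})$-norm $\lesssim\delta^{1/2}\sum_k\|\sigma_k\|_{L^\infty(\T)}\lesssim\delta^{1/2}$ and the second has $L^p(\hat{\Omega})$-norm $\lesssim\epsilon\cdot\epsilon^{-1/2}\log^{1/2}(1+\epsilon^{-1})\sum_k\|\sigma_k\|_{L^\infty(\T)}\lesssim\epsilon^{1/2}\log^{1/2}(1+\epsilon^{-1})$; here I invoke \eqref{eq:OU} for the Ornstein--Uhlenbeck moments and the fact that $\sigma_k = K\ast\varsigma_k$ inherits the $W^{1,\infty}(\T)$-summability from (A3) via \eqref{eq:K}. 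These contribute $\delta^{p/2}$ and $\epsilon^{p/2}\log^{p/2}(1+\epsilon^{-1})$, respectively.

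For the remainder part, the Lipschitz bound $|\sigma_k(\phi^\epsilon_s(x))-\sigma_k(\phi^\epsilon_{n\delta}(x))|\le\|\nabla\sigma_k\|_{L^\infty(\T)}\sup_{\tau\le\delta}\|\phi^\epsilon_{n\delta+\tau}-\phi^\epsilon_{n\delta}\|_{L^\infty(\T,\T)}$ for $s\in[n\delta,(n+1)\delta]$ and $\int_{n\delta}^{(n+1)\delta}|\eta^{\epsilon,k}_s|\,ds\le\delta\sup_{s\in[0,T]}|\eta^{\epsilon,k}_s|$ give the pointwise estimate
\begin{align*}
&\Big|\sum_{k\in\N}\int_{n\delta}^{(n+1)\delta}\big(\sigma_k(\phi^\epsilon_s(x))-\sigma_k(\phi^\epsilon_{n\delta}(x))\big)\eta^{\epsilon,k}_s\,ds\Big| \\
&\qquad\le\delta\,\sup_{\tau\le\delta}\|\phi^\epsilon_{n\delta+\tau}-\phi^\epsilon_{n\delta}\|_{L^\infty(\T,\T)}\sum_{k\in\N}\|\nabla\sigma_k\|_{L^\infty(\T)}\sup_{s\in[0,T]}|\eta^{\epsilon,k}_s|.
\end{align*}
By the Cauchy--Schwarz inequality under $\hE{\cdot}$, then \autoref{lem:phi_eps_incr} with exponent $2p$ (at $t=n\delta$) and \eqref{eq:OU} with exponent $2p$ together with (A3), this term has $L^p(\hat{\Omega})$-norm at most a constant times
\begin{align*}
\delta\,\big(\delta\epsilon^{-1/2}\log^{1/2}(1+\epsilon^{-1})+\delta^{\alpha}\big)\,\epsilon^{-1/2}\log^{1/2}(1+\epsilon^{-1}) = \delta^2\epsilon^{-1}\log(1+\epsilon^{-1}) + \delta^{1+\alpha}\epsilon^{-1/2}\log^{1/2}(1+\epsilon^{-1}),
\end{align*}
hence contributes $\delta^{2p}\epsilon^{-p}\log^p(1+\epsilon^{-1}) + \delta^{p(1+\alpha)}\epsilon^{-p/2}\log^{p/2}(1+\epsilon^{-1})$. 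Collecting all four contributions yields the claimed bound. The main obstacle is exactly this second term: the crude estimate used in \autoref{lem:phi_eps_incr} only gives $\delta\epsilon^{-1/2}\log^{1/2}(1+\epsilon^{-1})$ for it, and the improvement rests entirely on trading the time integral $\int\eta^{\epsilon,k}_s\,ds$ for a Brownian increment via the Ornstein--Uhlenbeck SDE; the remaining difficulty is the bookkeeping needed to handle the series over $k$ simultaneously with the $L^\infty$-norm in $x$ and the $L^p$-norm in $\omega$, which becomes routine once one sums in $\ell^1(\N)$ first, exploiting the summability in (A3).
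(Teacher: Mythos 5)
Your proof is correct and follows essentially the same route as the paper: the same five-term decomposition of the increment with the noise coefficient frozen at $t=n\delta$, the same Ornstein--Uhlenbeck identity $\int_{n\delta}^{(n+1)\delta}\eta^{\epsilon,k}_s\,ds=(W^k_{(n+1)\delta}-W^k_{n\delta})-\epsilon(\eta^{\epsilon,k}_{(n+1)\delta}-\eta^{\epsilon,k}_{n\delta})$ for the frozen part, and the same H\"older/Cauchy--Schwarz treatment of the remainder via \autoref{lem:phi_eps_incr} and \eqref{eq:OU}. The only (harmless) differences are cosmetic: you use $q=q'=2$ where the paper keeps a general H\"older exponent, and you spell out the absorption of the $z^\epsilon$-term into the stated right-hand side, which the paper dismisses as "easy."
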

\begin{proof}
The increment $\phi^\epsilon_{(n+1)\delta}(x) - \phi^\epsilon_{n\delta}(x)$ can be written as
\begin{align*}
\phi^\epsilon_{(n+1)\delta}(x) - \phi^\epsilon_{n\delta}(x)
= &\int_{n\delta}^{(n+1)\delta}
v^\epsilon_s(\phi^\epsilon_s(x)) ds \\
&+
\sum_{k \in \N}\int_{n\delta}^{(n+1)\delta}
 \left( \sigma_k(\phi^\epsilon_s(x))-\sigma_k(\phi^\epsilon_{n\delta}(x)) \right)\eta^{\epsilon,k}_s ds \\
&+
\sum_{k \in \N} \int_{n\delta}^{(n+1)\delta}
\sigma_k(\phi^\epsilon_{n\delta}(x)) \eta^{\epsilon,k}_s ds
\\
&\quad +\int_{n\delta}^{(n+1)\delta}
z^\epsilon_s(\phi^\epsilon_s(x)) ds 
+ \sqrt{2\nu} (w_{(n+1)\delta}-w_{n\delta}).
\end{align*}
The first, fourth and fifth term are easy. The second one is bounded in $L^\infty(\mathbb{T}^2,\mathbb{T}^2)$ uniformly in $n$ by
\begin{align*}
\int_{0}^{\delta}
\sum_{k \in \mathbb{N}} 
\|\nabla \sigma_k \|_{L^\infty(\mathbb{T}^2,\mathbb{R}^4)}
\sup_{\substack{t+s \leq T}}
\| \phi^\epsilon_{t+s} - \phi^\epsilon_t \|_{L^{\infty}(\mathbb{T}^2,\mathbb{T}^2)}
\sup_{s\in[0,T]} |\eta^{\epsilon,k}_s| ds,
\end{align*}
and by (A3) and H\"older inequality with exponent $q>1$
\begin{align*}
&\hE{ \left( \int_{0}^{\delta}
\sum_{k \in \mathbb{N}} 
\|\nabla \sigma_k \|_{L^\infty(\mathbb{T}^2,\mathbb{R}^4)}
\sup_{\substack{t+s \leq T}}
\| \phi^\epsilon_{t+s} - \phi^\epsilon_t \|_{L^{\infty}(\mathbb{T}^2,\mathbb{T}^2)}
\sup_{s\in[0,T]} |\eta^{\epsilon,k}_s| ds
\right)^p } \\
&\leq
\delta^{p-1}
\left( \sum_{k \in \mathbb{N}} 
\|\nabla \sigma_k \|_{L^\infty(\mathbb{T}^2,\mathbb{R}^4)} \right)^{p-1} 
\int_{0}^{\delta}
\sum_{k \in \mathbb{N}} 
\|\nabla \sigma_k \|_{L^\infty(\mathbb{T}^2,\mathbb{R}^4)}\\
&\quad \times
\hE{ \sup_{\substack{t+s \leq T}}
\| \phi^\epsilon_{t+s} - \phi^\epsilon_t \|^{pq}_{L^{\infty}(\mathbb{T}^2,\mathbb{T}^2)} }^{1/q} 
\hE{ \sup_{s\in[0,T]} |\eta^{\epsilon,k}_s|^{pq'} }^{1/q'} ds 
\\
&\lesssim
\delta^{p-1}
\int_{0}^{\delta}
\left( s^p \epsilon^{-p} \log^p(1+\epsilon^{-1}) ds
+
s^{p\alpha} \epsilon^{-p/2} \log^{p/2}(1+\epsilon^{-1}) \right) ds
\\
&\lesssim
\delta^{2p} \epsilon^{-p} \log^p(1+\epsilon^{-1})
+
\delta^{p(1+\alpha)} \epsilon^{-p/2} \log^{p/2}(1+\epsilon^{-1}).
\end{align*}
The third term is bounded in $L^\infty(\mathbb{T}^2,\mathbb{R}^2)$ by
\begin{align*}
\sum_{k \in \mathbb{N}}
\| \sigma_k \|_{L^\infty(\mathbb{T}^2,\mathbb{R}^2)}
\left| 
\int_{n\delta}^{(n+1)\delta}
\eta^{\epsilon,k}_s ds \right|
&=
\sum_{k \in \mathbb{N}}
\| \sigma_k \|_{L^\infty(\mathbb{T}^2,\mathbb{R}^2)}
\left| 
W^{k}_{(n+1)\delta}-W^{k}_{n\delta}\right|
\\
&\quad+ \sum_{k \in \mathbb{N}}
\| \sigma_k \|_{L^\infty(\mathbb{T}^2,\mathbb{R}^2)}
\epsilon
\left| 
\eta^{\epsilon,k}_{(n+1)\delta}-\eta^{\epsilon,k}_{n\delta}  \right|,
\end{align*}
from which we deduce as usual
\begin{align*}
\hE{\left( \sum_{k \in \mathbb{N}}
\| \sigma_k \|_{L^\infty(\mathbb{T}^2,\mathbb{R}^2)}
\left| 
\int_{n\delta}^{(n+1)\delta}
\eta^{\epsilon,k}_s ds \right|
\right)^p } 
\lesssim
\delta^{p/2} + \epsilon^{p/2} \log^{p/2}(1+\epsilon^{-1}).
\end{align*}
Putting all together, the thesis follows.
\end{proof}

\begin{proof}[Proof of \autoref{prop:old}]
For any given $t \in [0,T]$, let $\lfloor t \rfloor \eqqcolon m\delta$ be the largest multiple of $\delta$ strictly smaller than $t$.
We can therefore decompose 
\begin{align*}
\sum_{k \in \N}
\int_0^t
\sigma_k(\phi^\epsilon_s(x)) \eta^{\epsilon,k}_s ds
&=
\sum_{k \in \N}
\int_0^{m\delta}
\sigma_k(\phi^\epsilon_s(x)) \eta^{\epsilon,k}_s ds
+
\sum_{k \in \N}
\int_{m\delta}^t
\sigma_k(\phi^\epsilon_s(x)) \eta^{\epsilon,k}_s ds
\\
&=
\sum_{j = 1}^6
\sum_{n = 0}^{m-1} I^\epsilon_j(n)
+
\sum_{k \in \N}
\int_{m\delta}^t
\sigma_k(\phi^\epsilon_s(x)) \eta^{\epsilon,k}_s ds,
\end{align*}
and in a similar fashion
\begin{align*}
\sum_{k \in \N}
\int_0^t
\sigma_k(\phi^\epsilon_s(x)) \circ dW^k_s
&=
\sum_{k \in \N}
\int_0^{m\delta}
\sigma_k(\phi^\epsilon_s(x)) \circ dW^k_s
+
\sum_{k \in \N}
\int_{m\delta}^t
\sigma_k(\phi^\epsilon_s(x)) \circ dW^k_s
\\
&=
\sum_{j = 1}^4
\sum_{n = 0}^{m-1} J^\epsilon_j(n)
+
\sum_{k \in \N}
\int_{m\delta}^t
\sigma_k(\phi^\epsilon_s(x)) \circ dW^k_s.
\end{align*}

By \eqref{eq:OU}, the following estimate holds true
\begin{align*}
\hE{ \sup_{\substack{m = 0,\dots,T/\delta-1\\t \leq \delta}} \left\|
\sum_{k \in \N} \int_{m\delta}^t
\sigma_k(\phi^\epsilon_s(\cdot)) \eta^{\epsilon,k}_s ds
\right\|_{L^1(\T,\R^2)}
}
\lesssim
\delta \epsilon^{-1/2} \log^{1/2}(1+\epsilon^{-1}).
\end{align*}

Also, by (A3) and Kolmogorov continuity Theorem, for every fixed $\alpha \in (0,1/2)$ we have
\begin{align*}
\hE{ \sup_{\substack{m = 0,\dots,T/\delta-1\\t \leq \delta}} \left\|
\sum_{k \in \N}
\int_{m\delta}^t
\sigma_k(\phi^\epsilon_s(\cdot)) \circ dW^k_s
\right\|_{L^1(\T,\R^2)}
}
\lesssim
\delta^\alpha.
\end{align*}

Finally, by calculations similar to those performed in Lemma 4.6 and Lemma 4.7 of \cite{FlPa21}, for every fixed $\alpha \in (0,1/2)$
\begin{align*}
\hE{ \sup_{m = 0,\dots,T/\delta-1}
\left\|
\sum_{j = 1}^6
\sum_{n = 0}^{m-1} I^\epsilon_j(n)
-
\sum_{j = 1}^4
\sum_{n = 0}^{m-1} J^\epsilon_j(n)
\right\|_{L^1(\T,\R^2)}
}
\\
\lesssim 
\delta \epsilon^{-1/2}\log^{3/2}(1+\epsilon^{-1})
+
\delta^{\alpha-1} \epsilon^{1/2}\log(1+\epsilon^{-1})
\\
\delta^2 \epsilon^{-3/2}\log^{3/2}(1+\epsilon^{-1})
+
\delta^{1+\alpha} \epsilon^{-1}\log(1+\epsilon^{-1})
+
\delta^{\alpha}
.
\end{align*}
We conclude the proof fixing $\alpha$ close to $1/2$ so that $(1+\alpha)^{-1} < 3/4 < (2-2\alpha)^{-1}$, for instance $\alpha = 3/8$, and optimizing over $\delta$: for $\delta = \epsilon^{16/21}\log^{-4/21}(1+\epsilon^{-1})$, it follows the desired inequality
\begin{align*}
\hE{ \sup_{t \in [0,T]}\left\|
\sum_{k \in \N}
\int_0^t
\sigma_k(\phi^\epsilon_s(\cdot)) \eta^{\epsilon,k}_s ds
-
\sum_{k \in \N}
\int_0^t
\sigma_k(\phi^\epsilon_s(\cdot)) \circ dW^k_s
\right\|_{L^1(\T,\R^2)}}
\\
\lesssim
\epsilon^{1/42} \log^{47/42}(1+\epsilon^{-1}).
\end{align*}
\end{proof}

\subsection{Proof of \autoref{prop:sup_z}}
Recall the content of \autoref{prop:sup_z}: we need to prove, under assumptions (A1)-(A5)
\begin{align*}
\hE{\sup_{t \in [0,T]}
\left\|\int_0^t
z^\epsilon_s(\phi^\epsilon_s(\cdot)) ds\right\|_{L^\infty(\T,\R^2)}}
\lesssim
\epsilon^{1/12} \log^{11/12}(1+\epsilon^{-1}).
\end{align*}

Comparing the desired inequality with \autoref{lem:zeta_log}, one realizes that time integration of the process $z^\epsilon_s(\phi^\epsilon_s(x))$ allows a better control due to cancellation of opposite-sign oscillations, even if the latter may become of large magnitude for $\epsilon$ going to zero.

Concerning the strategy of the proof, in the first place we prove the following:
\begin{lem} \label{lem:z}
For every fixed $t \in [0,T]$ it holds
\begin{align*}
\hE{\left\|\int_0^t
z^\epsilon_s(\phi^\epsilon_s(\cdot)) ds\right\|_{L^\infty(\T,\R^2)}}
\lesssim
\epsilon^{1/6} \log^{5/6}(1+\epsilon^{-1}).
\end{align*}
\end{lem}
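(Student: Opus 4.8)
The plan is to unfold $z^\epsilon$ through \eqref{eq:zeta_psi}, pass to the measure-preserving coordinates induced by $\psi^\epsilon$, and then extract the cancellation hidden in the time integral of the quadratic Ornstein--Uhlenbeck factors. Writing $A^\epsilon_r \coloneqq (v^\epsilon_r+u^\epsilon_r)\cdot\nabla\theta^\epsilon_r$ and inserting \eqref{eq:zeta_psi} into $z^\epsilon_s(\phi^\epsilon_s(x)) = \int_\T K(\phi^\epsilon_s(x)-\psi^\epsilon_s(y))\,\zeta^\epsilon_s(\psi^\epsilon_s(y))\,dy$ (measure preservation of $\psi^\epsilon_s$), then integrating in $s$ and applying Fubini on $\{0\le r\le s\le t\}$, one gets
\begin{align*}
\int_0^t z^\epsilon_s(\phi^\epsilon_s(x))\,ds
&= -\int_\T\int_0^t A^\epsilon_r(\psi^\epsilon_r(y))\left(\int_r^t e^{-\epsilon^{-1}(s-r)}K(\phi^\epsilon_s(x)-\psi^\epsilon_s(y))\,ds\right)dr\,dy.
\end{align*}
Since $e^{-\epsilon^{-1}(s-r)}$ is concentrated on an $\epsilon$-window, the inner integral equals $\epsilon\,K(\phi^\epsilon_r(x)-\psi^\epsilon_r(y))$ up to an error controlled by \autoref{lem:log_lip} together with the increment estimate of \autoref{lem:phi_eps_incr} (and its analogue for $\psi^\epsilon$). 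So, modulo these errors, the task reduces to estimating $\epsilon\int_0^t (K\ast A^\epsilon_r)(\phi^\epsilon_r(x))\,dr$.

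Next I would split $u^\epsilon_r = z^\epsilon_r + K\ast\theta^\epsilon_r$, so that $A^\epsilon_r = A^{\epsilon,(1)}_r + A^{\epsilon,(2)}_r$ with $A^{\epsilon,(1)}_r \coloneqq (v^\epsilon_r+z^\epsilon_r)\cdot\nabla\theta^\epsilon_r = \sum_k\big((v^\epsilon_r+z^\epsilon_r)\cdot\nabla\varsigma_k\big)\eta^{\epsilon,k}_r$ linear in the $\eta^{\epsilon,k}$, and $A^{\epsilon,(2)}_r \coloneqq (K\ast\theta^\epsilon_r)\cdot\nabla\theta^\epsilon_r = \sum_{k,h}\big((K\ast\varsigma_k)\cdot\nabla\varsigma_h\big)\eta^{\epsilon,k}_r\eta^{\epsilon,h}_r$ quadratic. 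For the linear piece $\|A^{\epsilon,(1)}_r\|_{L^\infty}$ is only of order $\epsilon^{-1/2}$ up to logarithms, by \autoref{lem:zeta_log}, \eqref{eq:OU} and (A3); hence the freezing error above is harmless, and one is left with $\epsilon\sum_k\int_0^t (K\ast[(v^\epsilon_r+z^\epsilon_r)\cdot\nabla\varsigma_k])(\phi^\epsilon_r(x))\,\eta^{\epsilon,k}_r\,dr$. Using $\eta^{\epsilon,k}_r\,dr = dW^k_r - \epsilon\,d\eta^{\epsilon,k}_r$ turns this into a stochastic-integral term, bounded by It\=o's isometry and \autoref{lem:zeta_log}, plus a term with a spare factor $\epsilon^2$ which, after expanding $d\eta^{\epsilon,k}_r = \epsilon^{-1}dW^k_r - \epsilon^{-1}\eta^{\epsilon,k}_r\,dr$ and using $\E{\int_0^t|\eta^{\epsilon,k}_r|\,dr}\lesssim\epsilon^{-1/2}$ (from \eqref{eq:OU}), is of order $\epsilon^{1/2}$ times logarithms.

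The quadratic piece is the real point, since $\|A^{\epsilon,(2)}_r\|_{L^\infty}$ is as large as $\epsilon^{-1}\log(1+\epsilon^{-1})$, so no crude bound survives and the oscillation of $\eta^{\epsilon,k}_r\eta^{\epsilon,h}_r$ must be used. Here the first of the two It\=o formulas recalled above \autoref{lem:z} rewrites $\int_0^s e^{-\epsilon^{-1}(s-r)}\eta^{\epsilon,k}_r\eta^{\epsilon,h}_r\,dr$ in terms of $\eta^{\epsilon,k}_s\eta^{\epsilon,h}_s$, two stochastic integrals, and a $\delta_{kh}\epsilon^{-1}$-term whose coefficient sums, over $k=h$, to $\sum_k (K\ast\varsigma_k)\cdot\nabla\varsigma_k = 0$ by (A5); this reduces, again up to controlled errors, to $\epsilon\sum_{k,h}\int_0^t (K\ast[(K\ast\varsigma_k)\cdot\nabla\varsigma_h])(\phi^\epsilon_r(x))\,\eta^{\epsilon,k}_r\eta^{\epsilon,h}_r\,dr$. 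On this expression I would apply the second It\=o formula to replace $\eta^{\epsilon,k}_r\eta^{\epsilon,h}_r\,dr$ by $-\tfrac{\epsilon}{2}\,d(\eta^{\epsilon,k}_r\eta^{\epsilon,h}_r)$ plus stochastic differentials plus the $\delta_{kh}\epsilon^{-1}dr$-term --- the latter once more annihilated by (A5) --- and then integrate by parts in $r$ against $c_{kh}(\phi^\epsilon_\cdot)$, with $c_{kh}\coloneqq K\ast[(K\ast\varsigma_k)\cdot\nabla\varsigma_h]$ (a semimartingale whose cross-variation with $\eta^{\epsilon,k}\eta^{\epsilon,h}$ vanishes, since $w$ is independent of the $W^k$). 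The It\=o differential of $c_{kh}(\phi^\epsilon_r(\cdot))$ brings in $\nabla c_{kh}$ and $\Delta c_{kh}$ --- bounded through the mapping properties of $K$ and (A3) --- against the velocity (of size $\epsilon^{-1/2}$ up to logarithms) and a $\nabla c_{kh}\cdot dw$-martingale; combined with $\E{|\eta^{\epsilon,k}_r\eta^{\epsilon,h}_r|}\lesssim\epsilon^{-1}$, every surviving term carries a spare positive power of $\epsilon$.

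Finally, all the stochastic-integral and It\=o-differential contributions must be estimated in $L^\infty(\T)$ in $x$, not merely pointwise; I would do this via Sobolev embedding and Kolmogorov's continuity criterion, using the spatial regularity of $\sigma_k$, $c_{kh}$ and $z^\epsilon$ (from $K$ and (A3)) and the H\"older regularity in $x$ of $\phi^\epsilon_r$. This simultaneous $L^\infty_x$ control is the main obstacle: the reconstructed velocity $u^\epsilon$ has log-Lipschitz seminorm of order $\epsilon^{-1/2}\log^{1/2}(1+\epsilon^{-1})$, so $\phi^\epsilon_r$ is H\"older in $x$ only with an exponent that degenerates as $\epsilon\to0$, which forces one to take large $L^p$ moments and to track carefully the resulting powers of $\epsilon$ and $\log(1+\epsilon^{-1})$; balancing these against the gains from the cancellations above is what produces the stated $\epsilon^{1/6}\log^{5/6}(1+\epsilon^{-1})$ rather than the naive $\epsilon^{1/2}$ times logarithms. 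Collecting all contributions gives the claim.
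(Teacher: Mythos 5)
Your overall architecture coincides with the paper's: split $(v^\epsilon+u^\epsilon)\cdot\nabla\theta^\epsilon$ into the part linear in the $\eta^{\epsilon,k}$ (disposed of by a crude sup bound of order $\epsilon^{1/2}$ up to logarithms, so your extra integration by parts there is superfluous) and the quadratic part $\sum_{k,h}\Theta_{k,h}\,\eta^{\epsilon,k}\eta^{\epsilon,h}$ with $\Theta_{k,h}=\sigma_k\cdot\nabla\varsigma_h$; then use the two It\=o identities for products of Ornstein--Uhlenbeck processes together with (A5) to cancel the divergent $\delta_{k,h}\epsilon^{-1}$ contributions, reducing matters to $\epsilon\sum_{k,h}\int_0^t (K\ast\Theta_{k,h})(\phi^\epsilon_r(x))\,\eta^{\epsilon,k}_r\eta^{\epsilon,h}_r\,dr$. (Your Fubini-plus-freezing of the kernel over the $\epsilon$-window and your subsequent appeal to the first It\=o formula do the same job twice --- one of the two suffices --- and the freezing of the $y$-dependent displacement $\psi^\epsilon_s(y)\mapsto\psi^\epsilon_r(y)$ needs a variant of \autoref{lem:log_lip} with the supremum over shifts taken inside the integral; but these are redundancies and minor technical points, not errors. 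The paper reaches the same reduced term by splitting off the flow displacement as the separate error $\zeta^{\epsilon,2}$.)

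The genuine gap is in your treatment of that reduced term. You integrate by parts continuously in time against the semimartingale $c_{kh}(\phi^\epsilon_\cdot(x))$, which requires the It\=o (or, for $\nu=0$, the chain) rule for $c_{kh}=K\ast\Theta_{k,h}$ along $\phi^\epsilon$. Under (A3) with $\ell=1$ the datum $\Theta_{k,h}$ is merely bounded, so $\nabla c_{kh}=\nabla K\ast\Theta_{k,h}$ is a Calder\'on--Zygmund transform of an $L^\infty$ function: it lies in $BMO$ and in every $L^p$, $p<\infty$, but not in $L^\infty$; and $\Delta c_{kh}=\nabla^\perp\Theta_{k,h}$ does not exist as a bounded function unless $\ell\ge 2$. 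So the terms $\nabla c_{kh}\cdot(v^\epsilon+u^\epsilon)$ and $\Delta c_{kh}$ that your expansion produces are not controlled at the stated regularity, and the step is not justified. The paper circumvents exactly this by discretizing time with a mesh $\delta$, freezing $(K\ast\Theta_{k,h})(\phi^\epsilon_{n\delta}(x))$ on each subinterval, and using only the log-Lipschitz modulus of $K\ast(\cdot)$ acting on $L^\infty$ data; the balance of the freezing error $\delta\,\epsilon^{-1/2}\log^{3/2}(1+\epsilon^{-1})$ against the stochastic-integral contribution $\delta^{-1/2}\epsilon^{1/2}\log^{1/2}(1+\epsilon^{-1})$ is precisely what produces the exponent $1/6$. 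This also corrects your accounting of the rate: the degradation from $\epsilon^{1/2}$ to $\epsilon^{1/6}$ does not come from enforcing uniformity in $x$ (after freezing, the $x$-dependent factor multiplies the stochastic integrals from outside and is bounded by $\sum_{k,h}\|K\ast\Theta_{k,h}\|_{L^\infty(\T,\R^2)}$ uniformly in $x$, so no Kolmogorov-in-$x$ argument is needed), but from the mesh optimization that your scheme, as written, does not contain. As sketched, your argument therefore either claims a better rate than $\epsilon^{1/6}$ under hypotheses stronger than (A3), or does not close under (A3) as stated.
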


Having at hands the previous result, the proof of \autoref{prop:sup_z} goes as follows: for some parameter $\delta=T/m>0$, $m \in \N$ to be chosen, write
\begin{align*}
\sup_{t \in [0,T]}\left\|\int_0^t
z^\epsilon_s(\phi^\epsilon_s(\cdot)) ds\right\|_{L^\infty(\T,\R^2)}
&\leq
\sup_{n=0,\dots,m-1} 
\left\|\int_0^{n\delta}
z^\epsilon_s(\phi^\epsilon_s(\cdot)) ds\right\|_{L^\infty(\T,\R^2)}
\\
&\quad+
\sup_{\substack{n=0,\dots,m-1\\t \leq \delta}} 
\left\|\int_{n\delta}^{n\delta+t}
z^\epsilon_s(\phi^\epsilon_s(\cdot)) ds\right\|_{L^\infty(\T,\R^2)}
\\
&\leq
\sum_{n=0}^{m-1} 
\left\|\int_0^{n\delta}
z^\epsilon_s(\phi^\epsilon_s(\cdot)) ds\right\|_{L^\infty(\T,\R^2)}
\\
&\quad+
\delta
\sup_{s \in [0,T]} 
\|z^\epsilon_s(\phi^\epsilon_s(\cdot))\|_{L^\infty(\T,\R^2)}.
\end{align*}
Hence, by \autoref{lem:zeta_log} and \autoref{lem:z}
\begin{align*}
\hE{\sup_{t \in [0,T]}
\left\|\int_0^t
z^\epsilon_s(\phi^\epsilon_s(\cdot)) ds\right\|_{L^\infty(\T,\R^2)}}
&\leq
\sum_{n=0}^{m-1}
\hE{ 
\left\|\int_0^{n\delta}
z^\epsilon_s(\phi^\epsilon_s(\cdot)) ds\right\|_{L^\infty(\T,\R^2)}}
\\
&\quad+
\delta
\hE{\sup_{s \in [0,T]} 
\|z^\epsilon_s(\phi^\epsilon_s(\cdot))\|_{L^\infty(\T,\R^2)}}
\\
&\lesssim
\delta^{-1} \epsilon^{1/6} \log^{5/6}(1+\epsilon^{-1})
+ \delta \log(1+\epsilon^{-1}),
\end{align*}
and the thesis follows by optimizing the choice of $\delta$.

\begin{proof}[Proof of \autoref{lem:z}]
We will work with fixed $x \in \T$. The reader can easily check that all the inequalities present in the proof hold uniformly in $x$.
Recall $z^\epsilon_t = K \ast \zeta^\epsilon_t$, and for $\psi^\epsilon_{t,s}(x) \coloneqq \psi^\epsilon_s((\psi^\epsilon_t)^{-1}(x))$ the formula
\begin{align*}
\zeta^\epsilon_t(x)
&=
-\int_0^t
e^{-\epsilon^{-1}(t-s)}
((v^\epsilon_s + K \ast \zeta^\epsilon_s)\cdot\nabla {\theta}^\epsilon_s )(\psi^\epsilon_{t,s}(x)) ds
\\
&\quad-
\int_0^t
e^{-\epsilon^{-1}(t-s)}
((K \ast \theta^\epsilon_s)\cdot\nabla {\theta}^\epsilon_s )(\psi^\epsilon_{t,s}(x)) ds.
\end{align*}

For notational simplicity let $\Theta^\epsilon_s \coloneqq (K \ast \theta^\epsilon_s)\cdot\nabla {\theta}^\epsilon_s$, and rewrite
\begin{align*}
\zeta^\epsilon_t(x)
&=
-\int_0^t
e^{-\epsilon^{-1}(t-s)}
((v^\epsilon_s + K \ast \zeta^\epsilon_s)\cdot\nabla {\theta}^\epsilon_s )(\psi^\epsilon_{t,s}(x)) ds
\\
&\quad-
\int_0^t
e^{-\epsilon^{-1}(t-s)}
\left(
\Theta^\epsilon_s(\psi^\epsilon_{t,s}(x)) -
\Theta^\epsilon_s(x) \right) ds
\\
&\quad-
\int_0^t
e^{-\epsilon^{-1}(t-s)}
\Theta^\epsilon_s(x) ds
\\
&\quad \eqqcolon
\zeta^{\epsilon,1}_t(x) + \zeta^{\epsilon,2}_t(x) + \zeta^{\epsilon,3}_t(x).
\end{align*}

Let us focus on the terms $\zeta^{\epsilon,j}$, $j=1,2,3$ separately. Concerning $\zeta^{\epsilon,1}$,
\begin{align*}
\|\zeta^{\epsilon,1}_t \|_{L^\infty(\T)}
&\lesssim
\int_0^t e^{-\epsilon^{-1}(t-s)} ds \left(1+\sup_{s \in [0,T]} \|\zeta^\epsilon_s\|_{L^\infty(\T)}\right)
\\
&\qquad \times \sup_{s \in [0,T]}\| \nabla \theta^\epsilon_s \|_{L^\infty(\T,\R^2)},
\end{align*}
and thus the following holds by assumption (A2) and \autoref{lem:zeta_log}
\begin{align} \label{eq:zeta1}
\sup_{t \in [0,T]} \hE{\|\zeta^{\epsilon,1}_t\|_{L^\infty(\T)}}
\lesssim
\epsilon^{1/2} \log^{3/2}(1+\epsilon^{-1}).
\end{align}

Moving to $\zeta^{\epsilon,2}$, notice that $|\psi^\epsilon_{t,s}(x)-x| = |\psi^\epsilon_{t,s}(x)-\psi^\epsilon_{t,t}(x)|$, and letting $y=(\psi^\epsilon_t)^{-1}(x)$ we have
\begin{align*}
|\psi^\epsilon_{t,s}(x)-\psi^\epsilon_{t,t}(x)|
&=
|\psi^\epsilon_s(y)-\psi^\epsilon_t(y)|
\\
&\leq
\int_s^t
|v^\epsilon_r(\psi^\epsilon_r(y))| dr
+
\int_s^t
|u^\epsilon_r(\psi^\epsilon_r(y))| dr
\\
&\lesssim
|t-s| \left( 1+  
\sup_{r \in [0,T]} \|\zeta^\epsilon_r\|_{L^\infty(\T)}
+\sup_{r \in [0,T]} \|\theta^\epsilon_r\|_{L^\infty(\T)} \right),
\end{align*}
therefore
\begin{align*}
\|\zeta^{\epsilon,2}_t \|_{L^\infty(\T)}
&\lesssim
\int_0^t e^{-\epsilon^{-1}(t-s)} |t-s|  ds \sup_{s \in [0,T]}\| \nabla \Theta^\epsilon_s \|_{L^\infty(\T,\R^2)}
\\
&\qquad \times 
\left( 1+  
\sup_{r \in [0,T]} \|\zeta^\epsilon_r\|_{L^\infty(\T)}
+\sup_{r \in [0,T]} \|\theta^\epsilon_r\|_{L^\infty(\T)} \right),
\end{align*}
that implies
\begin{align} \label{eq:zeta2}
\sup_{t \in [0,T]}\hE{\|\zeta^{\epsilon,2}_t \|_{L^\infty(\T)}}
&\lesssim
\epsilon^{1/2} \log^{3/2}(1+\epsilon^{-1}).
\end{align}

Finally, let us consider the term $\zeta^{\epsilon,3}$, which requires a preliminary manipulation. Since $\theta^\epsilon_s(x) = \sum_{k \in \N} \sigma_k(x) \eta^{\epsilon,k}_s$, we can rewrite for every $x \in \T$
\begin{align*}
\Theta^\epsilon_s(x)
=
\sum_{k,h \in \N}
(\sigma_k\cdot\nabla \varsigma_h) (x)
\eta^{\epsilon,k}_s \eta^{\epsilon,h}_s
\eqqcolon
\sum_{k,h \in \N}
\Theta_{k,h}(x)
\eta^{\epsilon,k}_s \eta^{\epsilon,h}_s,
\end{align*}
where we have used $\sigma_k = K \ast \varsigma_k$ and $\Theta_{k,h} \coloneqq \sigma_k\cdot\nabla \varsigma_h$.
Also, rewrite:
\begin{align*}
\zeta^{\epsilon,3}_t(x)
&=
-
\int_0^t
e^{-\epsilon^{-1}(t-s)}
\Theta^\epsilon_s(x) ds
\\
&=
-
\sum_{k,h \in \N}
\Theta_{k,h}(x)
\int_0^t
e^{-\epsilon^{-1}(t-s)}
\eta^{\epsilon,k}_s \eta^{\epsilon,h}_s ds.
\end{align*}

By It\=o Formula, for every fixed $t$ and $k,h \in \N$ it holds
\begin{align*}
\eta^{\epsilon,k}_t \eta^{\epsilon,h}_t
&=
e^{-\epsilon^{-1} t }\eta^{\epsilon,k}_0 \eta^{\epsilon,h}_0
-
\epsilon^{-1} \int_0^t e^{-\epsilon^{-1}(t-s)}
\eta^{\epsilon,k}_s \eta^{\epsilon,h}_s ds
\\
&\quad
+\epsilon^{-1} \int_0^t e^{-\epsilon^{-1}(t-s)} \eta^{\epsilon,k}_s dW^h_s
+\epsilon^{-1} \int_0^t e^{-\epsilon^{-1}(t-s)} \eta^{\epsilon,h}_s dW^k_s
\\
&\quad
+\frac{\epsilon^{-2}}{2} \delta_{k,h} \int_0^t e^{-\epsilon^{-1}(t-s)} ds,
\end{align*}
with $\delta_{k,h}$ being the Kronecker delta function: $\delta_{k,h}=1$ if $k=h$ and $\delta_{k,h}=0$ if $k\neq h$.
Otherwise said:
\begin{align} \label{eq:int_eOU}
\int_0^t e^{-\epsilon^{-1}(t-s)}
\eta^{\epsilon,k}_s \eta^{\epsilon,h}_s ds
&=
\epsilon \left(  {e^{-\epsilon^{-1} t }\eta^{\epsilon,k}_0 \eta^{\epsilon,h}_0-\eta^{\epsilon,k}_t \eta^{\epsilon,h}_t} \right)
\\
&\quad \nonumber
\int_0^t e^{-\epsilon^{-1}(t-s)} \eta^{\epsilon,k}_s dW^h_s
+\int_0^t e^{-\epsilon^{-1}(t-s)} \eta^{\epsilon,h}_s dW^k_s
\\
&\quad
+\frac{1-e^{\epsilon^{-1} t}}{2} \delta_{k,h}. \nonumber
\end{align}

By \eqref{eq:int_eOU} and assumption (A5), for every $x \in \T$ we have
\begin{align*}
\zeta^{\epsilon,3}_t(x)
&=
\sum_{k,h \in \N}
\Theta^\epsilon_{k,h}(x)
\epsilon \left( \eta^{\epsilon,k}_t \eta^{\epsilon,h}_t -e^{-\epsilon^{-1} t }\eta^{\epsilon,k}_0 \eta^{\epsilon,h}_0 \right)
\\
&\quad-
\sum_{k,h \in \N}
\Theta^\epsilon_{k,h}(x)
\left( \int_0^t e^{-\epsilon^{-1}(t-s)} \eta^{\epsilon,k}_s dW^h_s
+\int_0^t e^{-\epsilon^{-1}(t-s)} \eta^{\epsilon,h}_s dW^k_s\right),
\end{align*} 
and therefore we can rewrite
\begin{align*}
\int_0^t
(K \ast \zeta^{\epsilon,3}_s)(\phi^\epsilon_s(x)) ds
&=
\sum_{k,h \in \N}
\int_0^t
(K \ast \Theta^\epsilon_{k,h})(\phi^\epsilon_s(x))
\epsilon  \eta^{\epsilon,k}_s \eta^{\epsilon,h}_s 
ds
\\
&\quad-
\sum_{k,h \in \N}
\int_0^t
(K \ast \Theta^\epsilon_{k,h})(\phi^\epsilon_s(x))
\epsilon  e^{-\epsilon^{-1} s }\eta^{\epsilon,k}_0 \eta^{\epsilon,h}_0 
ds
\\
&\hspace{-112pt}\quad
- \sum_{k,h \in \N}
\int_0^t
(K \ast \Theta_{k,h})(\phi^\epsilon_s(x))
\left(
\int_0^s e^{-\epsilon^{-1}(s-r)} \eta^{\epsilon,k}_r dW^h_r
+\int_0^s e^{-\epsilon^{-1}(s-r)} \eta^{\epsilon,h}_r dW^k_r
\right) ds.
\end{align*} 

It holds
\begin{align*}
\hE{\left| \int_0^t
(K \ast \Theta^\epsilon_{k,h})(\phi^\epsilon_s(x))
\epsilon  e^{-\epsilon^{-1} s }\eta^{\epsilon,k}_0 \eta^{\epsilon,h}_0 
ds\right|}
&\lesssim
\epsilon \log(1+\epsilon^{-1}),
\end{align*}
\begin{align*}
&\hE{\left| \int_0^t
(K \ast \Theta^\epsilon_{k,h})(\phi^\epsilon_s(x))
\int_0^s e^{-\epsilon^{-1}(s-r)} \eta^{\epsilon,k}_r dW^h_r
ds \right|}
\\
&\quad=\hE{\left| \int_0^t \left(\int_r^t
(K \ast \Theta^\epsilon_{k,h})(\phi^\epsilon_s(x))
e^{-\epsilon^{-1}(s-r)} ds \right) \eta^{\epsilon,k}_r dW^h_r
\right|}
\\
&\quad\lesssim
\hE{\left| \int_0^t \left(\int_r^t
(K \ast \Theta^\epsilon_{k,h})(\phi^\epsilon_s(x))
e^{-\epsilon^{-1}(s-r)} ds \right) \eta^{\epsilon,k}_r dW^h_r
\right|^2}^{1/2}
\\
&\quad\lesssim
\hE{ \int_0^t \left(\int_r^t
(K \ast \Theta^\epsilon_{k,h})(\phi^\epsilon_s(x))
e^{-\epsilon^{-1}(s-r)} ds \right)^2  |\eta^{\epsilon,k}_r|^2 dr
}^{1/2}
\\
&\quad\lesssim
\epsilon^{1/2} \log^{1/2}(1+\epsilon^{-1}).
\end{align*}

The last non-trivial term is manipulated as follows. Let $\delta=t/m>0$, $m \in \N$ to be suitably chosen. We have
\begin{align} \label{eq:zeta3_last}
\sum_{k,h \in \N}
&\int_0^t
(K \ast \Theta_{k,h})(\phi^\epsilon_s(x))
\epsilon \eta^{\epsilon,k}_s \eta^{\epsilon,h}_s
ds
\\
&=
\sum_{k,h \in \N} 
\sum_{n=0}^{m-1}
\int_{n\delta}^{(n+1)\delta}
\left(
(K \ast \Theta_{k,h})(\phi^\epsilon_s(x))-
(K \ast \Theta_{k,h})(\phi^\epsilon_{n\delta}(x)) \right)
\epsilon \eta^{\epsilon,k}_s \eta^{\epsilon,h}_s
ds \nonumber
\\
&\quad+
\sum_{k,h \in \N} 
\sum_{n=0}^{m-1}
(K \ast \Theta_{k,h})(\phi^\epsilon_{n\delta}(x))
\int_{n\delta}^{(n+1)\delta}
\epsilon \eta^{\epsilon,k}_s \eta^{\epsilon,h}_s
ds. \nonumber
\end{align}
Recalling \eqref{eq:char_eps_intro}, for every $\alpha \in (0,1/2)$ it holds
\begin{align*}
|\phi^\epsilon_t(x)-\phi^\epsilon_s(x)|
&\leq
\int_s^t
|v^\epsilon_r(\phi^\epsilon_r(x))| dr
+
\int_s^t
|u^\epsilon_r(\phi^\epsilon_r(x))| dr
+
\sqrt{2\nu} (w_t-w_s)
\\
&\lesssim
|t-s| \left( 1+  
\sup_{r \in [0,T]} \|\zeta^\epsilon_r\|_{L^\infty(\T)}
+\sup_{r \in [0,T]} \|\theta^\epsilon_r\|_{L^\infty(\T)} \right)
+
|t-s|^\alpha,
\end{align*}
which implies
\begin{align} \label{eq:delta1}
&\hE{\left| \sum_{k,h \in \N} 
\sum_{n=0}^{m-1}
\int_{n\delta}^{(n+1)\delta}
\left(
(K \ast \Theta_{k,h})(\phi^\epsilon_s(x))-
(K \ast \Theta_{k,h})(\phi^\epsilon_{n\delta}(x)) \right)
\epsilon \eta^{\epsilon,k}_s \eta^{\epsilon,h}_s
ds \right|}
\\
&\quad\lesssim
\delta \epsilon^{-1/2} \log^{3/2}(1+\epsilon^{-1})
+
\delta^\alpha \log(1+\epsilon^{-1}) \nonumber.
\end{align}

Also, we can apply It\=o Formula again to find an alternative representation for the time integral of the quadratics $\eta^{\epsilon,k}_s \eta^{\epsilon,h}_s$, similar to \eqref{eq:int_eOU}. Indeed,
\begin{align*} 
\eta^{\epsilon,k}_{(n+1)\delta} \eta^{\epsilon,h}_{(n+1)\delta}
- \eta^{\epsilon,k}_{n\delta} \eta^{\epsilon,h}_{n\delta}
&=
-2\epsilon^{-1} \int_{n\delta}^{(n+1)\delta} 
\eta^{\epsilon,k}_t \eta^{\epsilon,h}_t dt
\\
&\quad
+\epsilon^{-1} \int_{n\delta}^{(n+1)\delta} \eta^{\epsilon,k}_t dW^h_t
+\epsilon^{-1} \int_{n\delta}^{(n+1)\delta} \eta^{\epsilon,h}_t dW^k_t 
\\
&\quad
+\frac{\epsilon^{-2} \delta}{2} \delta_{k,h}, 
\end{align*}
and rearranging the terms we obtain
\begin{align} \label{eq:int_OU}
\int_{n\delta}^{(n+1)\delta} 
\epsilon \eta^{\epsilon,k}_t \eta^{\epsilon,h}_t dt
&=
\frac{\epsilon^2}{2} \left( \eta^{\epsilon,k}_{n\delta} \eta^{\epsilon,h}_{n\delta} - \eta^{\epsilon,k}_{(n+1)\delta} \eta^{\epsilon,h}_{(n+1)\delta} \right)
\\
&\quad
+\frac{\epsilon}2 \int_{n\delta}^{(n+1)\delta} \eta^{\epsilon,k}_t dW^h_t
+\frac{\epsilon}2 \int_{n\delta}^{(n+1)\delta} \eta^{\epsilon,h}_t dW^k_t
+\frac{\delta}{4} \delta_{k,h} \nonumber.
\end{align}

Finally, making use of \eqref{eq:int_OU} above and assumption (A5) we can rewrite
\begin{align*}
\sum_{k,h \in \N} 
&\sum_{n=0}^{m-1}
(K \ast \Theta_{k,h})(\phi^\epsilon_{n\delta}(x))
\int_{n\delta}^{(n+1)\delta}
\epsilon \eta^{\epsilon,k}_s \eta^{\epsilon,h}_s
ds
\\
&=
\sum_{k,h \in \N} 
\sum_{n=0}^{m-1}
(K \ast \Theta_{k,h})(\phi^\epsilon_{n\delta}(x))
\frac{\epsilon^2}{2} \left( \eta^{\epsilon,k}_{n\delta} \eta^{\epsilon,h}_{n\delta} - \eta^{\epsilon,k}_{(n+1)\delta} \eta^{\epsilon,h}_{(n+1)\delta} \right)
\\
&\hspace{-10pt}\quad+
\sum_{k,h \in \N} 
\sum_{n=0}^{m-1}
(K \ast \Theta_{k,h})(\phi^\epsilon_{n\delta}(x))
\left(\frac{\epsilon}2 \int_{n\delta}^{(n+1)\delta} \eta^{\epsilon,k}_t dW^h_t
+\frac{\epsilon}2 \int_{n\delta}^{(n+1)\delta} \eta^{\epsilon,h}_t dW^k_t \right).
\end{align*}

We have
\begin{align} \label{eq:delta2}
\hE{\left| \sum_{k,h \in \N} 
\sum_{n=0}^{m-1}
(K \ast \Theta_{k,h})(\phi^\epsilon_{n\delta}(x))
\frac{\epsilon^2}{2} \left( \eta^{\epsilon,k}_{n\delta} \eta^{\epsilon,h}_{n\delta} - \eta^{\epsilon,k}_{(n+1)\delta} \eta^{\epsilon,h}_{(n+1)\delta} \right) \right|}
\\
\lesssim \nonumber
\delta^{-1} \epsilon \log(1+\epsilon^{-1}),
\end{align}
and 
\begin{align} \label{eq:delta3}
&\hE{\left| 
\sum_{k,h \in \N} 
\sum_{n=0}^{m-1}
(K \ast \Theta_{k,h})(\phi^\epsilon_{n\delta}(x))
\frac{\epsilon}2 \int_{n\delta}^{(n+1)\delta} \eta^{\epsilon,k}_t dW^h_t
\right|}
\\
&\quad\lesssim 
\sum_{n=0}^{m-1}
\epsilon
\hE{\left| 
\int_{n\delta}^{(n+1)\delta} \eta^{\epsilon,k}_t dW^h_t
\right|^2}^{1/2}
\lesssim
\delta^{-1/2} \epsilon^{1/2} \log^{1/2}(1+\epsilon^{-1}). \nonumber
\end{align}

It only remains to choose $\delta$ in a suitable way, so that all the terms \eqref{eq:delta1}, \eqref{eq:delta2} and  \eqref{eq:delta2} are infinitesimal in the limit $\epsilon \to 0$. 
Taking for instance $\alpha=1/3$ and optimizing over $\delta$ gives
\begin{align} \label{eq:zeta3}
\hE{\left| \int_0^t (K \ast \zeta^{\epsilon,3}_s)(\phi^\epsilon_s(x)) ds \right|}
\lesssim
\epsilon^{1/6} \log^{5/6}(1+\epsilon^{-1}).
\end{align}
Considering \eqref{eq:zeta1}, \eqref{eq:zeta2} and \eqref{eq:zeta3}, we finally get the desired estimate: the proof is complete.
\end{proof}

\section{Convergence of characteristics} \label{sec:conv_char}
In this section we prove our first major result \autoref{thm:char}.

We take the opportunity to point out a mistake in \cite[Lemma 3.8]{FlPa21}, where BDG inequality was applied incorrectly. The present proof also corrects this previous mistake, and it is based on It\=o Formula for a smooth approximation $g_\delta(x)$ of the absolute value $|x|$. 

\begin{proof}[Proof of \autoref{thm:char}]
The strategy of the proof is very similar to that of \cite[Proposition 4.1]{FlPa21}. Indeed, the difference $\phi^\epsilon-\phi$ solves $\hat{\PP}$-a.s. for every $t \in [0,T]$ and $x \in \T$:
\begin{align*}
\phi_t^\epsilon(x)-\phi_t(x)
&=
\int_0^t
v^\epsilon_s(\phi^\epsilon_s(x)) ds
-
\int_0^t
v_s(\phi^\epsilon_s(x)) ds
\\
&\quad
+
\int_0^t
v_s(\phi^\epsilon_s(x)) ds
-
\int_0^t
v_s(\phi_s(x)) ds
\\
&\quad
+
\sum_{k \in \N}
\int_0^t
\sigma_k(\phi^\epsilon_s(x)) \eta^{\epsilon,k}_s ds
-
\sum_{k \in \N}
\int_0^t
\sigma_k(\phi^\epsilon_s(x))\circ dW^k_s
\\
&\quad
+
\sum_{k \in \N}
\int_0^t
\sigma_k(\phi^\epsilon_s(x)) \circ dW^k_s
-
\sum_{k \in \N}
\int_0^t
\sigma_k(\phi_s(x))\circ dW^k_s
\\
&\quad
+\int_0^t
z^\epsilon_s(\phi^\epsilon_s(x)) ds.
\end{align*} 

For $\delta>0$, introduce the smooth function $g_\delta:\R^2 \to \R$ defined by $g_\delta(x) \coloneqq (|x|^2+\delta)^{1/2}$.
It holds $\partial_{x_j} g_\delta(x)= x_j g_\delta(x)^{-1}$ and $\partial_{x_j}\partial_{x_i} g_\delta(x)= g_\delta(x)^{-1}(\delta_{i,j}-x_i x_j g_\delta(x)^{-2})$ for every $x \in \R^2$ and $j=1,2$, and moreover $|x| \leq g_\delta(x) \leq |x| + \delta^{1/2}$.

Denote
\begin{align*}
R^\epsilon_t(x) 
&\coloneqq 
\int_0^t z^\epsilon_s(\phi^\epsilon_s(x))ds
+
\sum_{k \in \N}
\int_0^t
\sigma_k(\phi^\epsilon_s(x)) \eta^{\epsilon,k}_s ds
-
\sum_{k \in \N}
\int_0^t
\sigma_k(\phi^\epsilon_s(x))\circ dW^k_s,
\end{align*}
and 
\begin{align*}
Z^\epsilon_t(x) \coloneqq \phi_t^\epsilon(x)-\phi_t(x)-R^\epsilon_t(x),
\end{align*}
both seen as functions on the whole plane $\R^2$.
Applying It\=o Formula to $g_\delta(Z^\epsilon_t(x))$ yields:
{\small
\begin{align*}
d g_\delta(Z^\epsilon_t(x)) 
&= 
Z^\epsilon_t(x) g_\delta(Z^\epsilon_t(x))^{-1} \cdot
\left( 
v^\epsilon_t(\phi^\epsilon_t(x))-v_t(\phi^\epsilon_t(x))
\right) dt
\\
&\quad+
Z^\epsilon_t(x) g_\delta(Z^\epsilon_t(x))^{-1} \cdot
\left(
v_t(\phi^\epsilon_t(x))-v_t(\phi_t(x))
\right) dt
\\
&\quad+
\sum_{k \in \N}
Z^\epsilon_t(x) g_\delta(Z^\epsilon_t(x))^{-1} \cdot
\left(
\sigma_k(\phi^\epsilon_t(x))-\sigma_k(\phi_t(x)) 
\right)
dW^k_t
\\
&\quad+
Z^\epsilon_t(x) g_\delta(Z^\epsilon_t(x))^{-1} \cdot
\left(
c(\phi^\epsilon_t(x))-c(\phi_t(x)) 
\right)
dt
\\
&\quad
+\sum_{k \in \N}
\sum_{i,j = 1}^2
g_\delta(Z^\epsilon_t(x))^{-1}
(\delta_{i,j}-(Z^\epsilon_t(x))^i (Z^\epsilon_t(x))^j g_\delta(Z^\epsilon_t(x))^{-2})
\\
&\qquad \times
\left(
\sigma_k(\phi^\epsilon_t(x))-\sigma_k(\phi_t(x)) 
\right)^i
\left(
\sigma_k(\phi^\epsilon_t(x))-\sigma_k(\phi_t(x)) 
\right)^j
dt
,
\end{align*}}
and therefore 
\begin{align*}
\hE{\left|\phi^\epsilon_t(x)-\phi^\epsilon_t(x)\right|}
&\leq
\hE{|Z^\epsilon_t(x)|}
+
\hE{\left|R^\epsilon_t(x)\right|}
\leq
\hE{g_\delta(Z_t)}
+
\hE{\left|R^\epsilon_t(x)\right|}
\\
&\lesssim
\delta^{1/2}
+
\hE{\left|R^\epsilon_t(x)\right|}
+
\hE{ \int_0^t
\left|
v^\epsilon_s(\phi^\epsilon_s(x))-v_s(\phi^\epsilon_s(x))
\right| ds} 
\\
&\quad+
\hE{ \int_0^t
\left|
v_s(\phi^\epsilon_s(x))-v_s(\phi_s(x))
\right| ds}  
\\
&\quad+
\hE{ \int_0^t
\left|
\phi^\epsilon_s(x)-\phi_s(x) 
\right| ds }
+ 
\delta^{-1/2}
\hE{\sup_{t \in [0,T]} \left|R^\epsilon_t(x)\right|},
\end{align*}
where in the last line we have used $g_\delta(Z^\epsilon_s(x))^{-1} \leq \delta^{-1/2}$ and $\left|
\phi^\epsilon_s(x)-\phi_s(x) 
\right| \lesssim |Z^\epsilon_s(x)| + \left|R^\epsilon_s(x)\right|$.

Taking the integral over $x \in \T$ and using assumptions (A2), (A4), concavity of the function $\gamma$, Jensen inequality, \autoref{prop:old} and \autoref{prop:sup_z} we get
\begin{align*}
\hE{ \| \phi^\epsilon_t - \phi_t \|_{L^1(\T,\T)}}
&\lesssim
\delta^{1/2}
+
\delta^{-1/2} \epsilon^{1/42} \log^{47/42}(1+\epsilon^{-1})
+
c_\epsilon
\\
&\quad+
\int_0^t
\gamma \left( \hE{ \| \phi^\epsilon_s - \phi_s \|_{L^1(\T,\T)}} \right) ds
\end{align*}
uniformly in $t \in [0,T]$ and $\delta>0$.
Taking $\delta = \epsilon^{1/42} \log^{47/42}(1+\epsilon^{-1})$ we deduce 
the desired result by \autoref{lem:comp}.
\end{proof}

\section{Convergence of large-scale dynamics} \label{sec:conv_large}

Recall the representation formulas for the solutions of \eqref{eq:large_eps_intro} and \eqref{eq:large_intro}
\begin{align*}
\Xi^\epsilon_t
&=
\tE{ \Xi_0 \circ (\phi^\epsilon_t)^{-1} + \int_0^t q^\epsilon_s \circ \phi^\epsilon_s \circ(\phi^\epsilon_t)^{-1}ds},
\\
\Xi_t
&=
\tE{ \Xi_0 \circ (\phi_t)^{-1} + \int_0^t q_s \circ \phi_s \circ(\phi_t)^{-1}ds},
\end{align*}
with $\phi^\epsilon$ and $\phi$ solving respectively \eqref{eq:char_eps_intro} and \eqref{eq:char_intro}.

As made clear by the following proof, these representation formulas are the key ingredient needed to show \autoref{thm:conv_large}, thus justifying our \autoref{def:sol} in terms of these identities.

\begin{proof}[Proof of \autoref{thm:conv_large}]
Let $f \in L^1(\T)$ and $t \in [0,T]$.
We have
\begin{align*}
&\left|\int_\T \Xi^\epsilon_t(x)f(x)dx - \int_\T \Xi_t(x)f(x)dx\right|
\\
&\leq
\left|\int_\T \tE{ \Xi_0((\phi^\epsilon_t)^{-1}(x))} f(x)dx - \int_\T \tE{ \Xi_0((\phi_t)^{-1}(x))}f(x)dx\right|
\\
&\quad+
\left|\int_\T \tE{ \int_0^t q^\epsilon_s(\phi^\epsilon_s((\phi^\epsilon_t)^{-1}(x)))ds} f(x)dx - \int_\T \tE{ \int_0^t q_s(\phi_s((\phi_t)^{-1}(x)))ds}f(x)dx\right|
\\
&=
\left|\tE{ \int_\T \Xi_0((\phi^\epsilon_t)^{-1}(x)) f(x)dx - \int_\T \Xi_0((\phi_t)^{-1}(x))f(x)dx}\right|
\\
&\quad+
\left|\tE{ \int_\T  \int_0^t q^\epsilon_s(\phi^\epsilon_s((\phi^\epsilon_t)^{-1}(x)))ds f(x)dx - \int_\T \int_0^t q_s(\phi_s((\phi_t)^{-1}(x)))ds f(x)dx}\right|
\\
&=
\left|\tE{ \int_\T \Xi_0(y) f(\phi^\epsilon_t(y))dy - \int_\T \Xi_0(y)f(\phi_t(y))dy}\right|
\\
&\quad+
\left|\tE{\int_0^t \int_\T  q^\epsilon_s(\phi^\epsilon_s(y)) f(\phi_t^\epsilon(y))dy ds - \int_0^t \int_\T  q_s(\phi_s(y))f(\phi_t(y)dyds }\right|.
\end{align*}

Taking expectation with respect to $\PP$, the first summand is bounded by
\begin{align} \label{eq:bound1}
\E{\left|\tE{ \int_\T \Xi_0(y) f(\phi^\epsilon_t(y))dy - \int_\T \Xi_0(y)f(\phi_t(y))dy}\right|} \nonumber
\\
\leq
\|\Xi_0\|_{L^\infty(\T)} \hE{ \int_\T \left| f(\phi^\epsilon_t(y)) - f(\phi_t(y)) \right| dy}.
\end{align}
As for the second term, we can rewrite
\begin{align*}
\int_0^t \int_\T&  q^\epsilon_s(\phi^\epsilon_s(y)) f(\phi_t^\epsilon(y))dy ds - \int_0^t \int_\T  q_s(\phi_s(y))f(\phi_t(y))dyds
\\
&=
\int_0^t \int_\T  q^\epsilon_s(\phi^\epsilon_s(y)) f(\phi_t^\epsilon(y))dy ds - \int_0^t \int_\T  q^\epsilon_s(\phi^\epsilon_s(y)) f(\phi_t(y))dyds
\\
&\quad+
\int_0^t \int_\T  q^\epsilon_s(\phi^\epsilon_s(y))  f(\phi_t(y)) dy ds - \int_0^t \int_\T  q_s(\phi^\epsilon_s(y))f(\phi_t(y))dyds
\\
&\quad+
\int_0^t \int_\T  q_s(\phi^\epsilon_s(y))  f(\phi_t(y)) dy ds - \int_0^t \int_\T  q_s(\phi_s(y))f(\phi_t(y))dyds,
\end{align*}
with estimates
\begin{align} \label{eq:bound2}
&\hE{\left|\int_0^t \int_\T  q^\epsilon_s(\phi^\epsilon_s(y)) f(\phi_t^\epsilon(y))dy ds - \int_0^t \int_\T  q^\epsilon_s(\phi^\epsilon_s(y)) f(\phi_t(y))dyds \right|} \nonumber
\\
&\qquad
\leq
\int_0^t \|q^\epsilon_s\|_{L^\infty(\T)} ds 
\hE{\int_\T |f(\phi_t^\epsilon(y)) - f(\phi_t(y)) |dy} ;
\end{align}
\begin{align} \label{eq:bound3}
&\hE{\left| \int_0^t \int_\T  q^\epsilon_s(\phi^\epsilon_s(y))  f(\phi_t(y)) dy ds - \int_0^t \int_\T  q_s(\phi^\epsilon_s(y))f(\phi_t(y))dyds \right|} \nonumber
\\
&\qquad\leq
\int_0^t \|q^\epsilon_s-q_s \|_{L^\infty(\T)} ds \| f\|_{L^1(\T)} ;
\end{align}
and
\begin{align} \label{eq:bound4}
&\hE{\left| \int_0^t \int_\T  q_s(\phi^\epsilon_s(y))  f(\phi_t(y)) dy ds - \int_0^t \int_\T  q_s(\phi_s(y))f(\phi_t(y))dyds \right|} \nonumber
\\
&\qquad\leq
\hE{\int_0^t \int_\T  |q_s(\phi^\epsilon_s(y))  -  q_s(\phi_s(y)) | |f(\phi_t(y))|dyds } \nonumber
\\ 
&\qquad\eqqcolon
\hE{\int_0^t \int_\T  |q_s(\phi^\epsilon_s(y))  -  q_s(\phi_s(y)) | d\mu(y)ds},
\end{align}
where $d\mu(y)\coloneqq |f(\phi_t(y))|dy$ is a random Radon measure on $\T$.

By assumptions (A6) and (A7), the terms \eqref{eq:bound1}, \eqref{eq:bound2} and \eqref{eq:bound3} go to zero as $\epsilon \to 0$, using the same reasoning of \cite[Theorem 5.1]{FlPa21}.
Therefore, here we restrict ourselves to only consider the remaining term \eqref{eq:bound4}.

Let us argue \emph{per absurdum}. Suppose by contradiction that there exists a subsequence $\epsilon_k \to 0$ such that 
\begin{align} \label{eq:absurd}
\hE{\int_0^t \int_\T  |q_s(\phi^{\epsilon_k}_s(y))  -  q_s(\phi_s(y)) | d\mu(y)ds}
\geq 
C
\end{align}
for some $C>0$ and for every $\epsilon_k$.

Let $\mathcal{N}$ and $\mathcal{N}$ be negligible sets such that $\phi_t$ is measure preserving for every $\omega \in \mathcal{N}^c$ and $\tilde{\omega} \in \tilde{\mathcal{N}}^c$.

Take $\delta>0$. By Lusin Theorem \cite[Theorem 2.23]{Ru70} there exists a measurable set $C_\delta \subset [0,t] \times \T$ with $\mathscr{L}_{[0,t]} \otimes \mathscr{L}_\T ([0,t] \times \T \setminus C_\delta)<\delta$ and a continuous function $Q_\delta \in C([0,t] \times \T)$ that coincides with $q$ on $C_\delta$. Therefore
\begin{align*}
\int_0^t \int_\T  |q_s(\phi^{\epsilon_k}_s(y))  -  q_s(\phi_s(y)) | &d\mu(y)ds
=
\int_{C_\delta}  |q_s(\phi^{\epsilon_k}_s(y))  -  q_s(\phi_s(y)) | d\mu(y)ds
\\
&\quad+
\int_{[0,t] \times \T \setminus C_\delta}  |q_s(\phi^{\epsilon_k}_s(y))  -  q_s(\phi_s(y)) | d\mu(y)ds
\\
&\leq
\int_{[0,t] \times \T}  |Q_\delta(s,\phi^{\epsilon_k}_s(y))  -  Q_\delta(s,\phi_s(y)) | d\mu(y)ds
\\
&\quad+ 
2\int_{[0,t] \times \T \setminus C_\delta}  \|q_s\|_{L^\infty(\T)}  d\mu(y)ds.
\end{align*}

Let us consider the second term first. 
Recalling $d\mu(y)=|f(\phi_t(y))|dy$, we have
\begin{align*}
\int_{[0,t] \times \T \setminus C_\delta}  \|q_s\|_{L^\infty(\T)}  d\mu(y)ds
&=
\int_{[0,t] \times \T \setminus C_\delta}  \|q_s\|_{L^\infty(\T)}  |f(\phi_t(y))|dyds
\\
&=
\int_{\phi_t^{-1}(C_\delta^c)}  \|q_s\|_{L^\infty(\T)}  |f(y)|dyds,
\end{align*}
with $\phi_t^{-1}(C_\delta^c) \coloneqq \{ (s,y) : (s,\phi_t(y))\in C_\delta^c \}$. Since $\phi_t$ is measure preserving for every $\omega \in \mathcal{N}^c$ and $\tilde{\omega} \in \tilde{\mathcal{N}}^c$, it is easy to check
\begin{align*}
\mathscr{L}_{[0,t]} \otimes \mathscr{L}_\T (\phi_t^{-1}(C_\delta^c)) 
= 
\mathscr{L}_{[0,t]} \otimes \mathscr{L}_\T (C_\delta^c) 
< \delta
\end{align*}
$\hat{\PP}$-almost surely, and since $\|q\|_{L^\infty(\T)}|f| \in L^1([0,t] \times \T)$, absolute continuity of Lebesgue integral gives the existence of  $\delta>0$ such that for every $\omega \in \mathcal{N}^c$ and $\tilde{\omega} \in \tilde{\mathcal{N}}^c$
\begin{align*}
\int_{[0,t] \times \T \setminus C_\delta}  \|q_s\|_{L^\infty(\T)}  d\mu(y)ds < C/3.
\end{align*}

We fix such a $\delta$ hereafter. For the first term we argue as follows: since we have proved
\begin{align*}
 \sup_{t \in [0,T]} \hE{ \| \phi_t^{\epsilon_k}-\phi_t \|_{L^1(\T,\T)}}
\to 0
\end{align*}
as ${\epsilon_k} \to 0$, then for every fixed $s \in [0,T]$ there exists a subsequence (that we still denote $\epsilon_k$) such that the maps 
\begin{align*}
\Phi^{\epsilon_k}_s:\hat{\Omega} \times \T &\to [0,T] \times \T,
\\
\Phi^{\epsilon_k}_s(\hat{\omega},y)&=(s,\phi^\epsilon(\hat{\omega},s,y))
\end{align*}
converge $\hat{\PP} \otimes \mathscr{L}_{\T}$-almost everywhere to $\Phi_s$ given by $\Phi_s(\hat{\omega},y)=(s,\phi(\hat{\omega},s,y))$.
By almost sure continuity in time of $\Phi^{\epsilon_k}_s$ and $\Phi_s$, it is possible to extract a common subsequence $\epsilon_k \to 0$ such that $\Phi^{\epsilon_k}_s$ converges $\hat{\PP} \otimes \mathscr{L}_{\T}$-almost everywhere to $\Phi_s$ simultaneously for all $s \in [0,T]$.   

Therefore, since $Q_\delta$ is continuous on $[0,t] \times \T$, also $Q_\delta(\Phi^{\epsilon_k})$ converges $\hat{\PP} \otimes \mathscr{L}_{[0,t]} \otimes \mathscr{L}_{\T}$-almost everywhere to $Q_\delta(\Phi)$, and since $\mu$ is absolutely continuous with respect to $\mathscr{L}_\T$ for almost every $\hat{\omega} \in \hat{\Omega}$, the convergence is actually $\hat{\PP} \otimes \mathscr{L}_{[0,t]} \otimes \mu_{\hat{\omega}}$-almost everywhere; moreover, $Q_\delta(\Phi^{\epsilon_k})$ is dominated by the constant $\sup_{s \in [0,t], y \in \T} |Q_\delta(s,y)|$, and Lebesgue dominated convergence yields convergence in $L^1(\hat{\Omega} \times [0,T] \times \T,\hat{\PP} \otimes \mathscr{L}_{[0,t]} \otimes \mu_{\hat{\omega}})$, that is
\begin{align*}
\hE{\int_{[0,t] \times \T}  |Q_\delta(s,\phi^{\epsilon_k}_s(y))  -  Q_\delta(s,\phi_s(y)) | d\mu(y)ds} \to 0,
\end{align*}
as ${\epsilon_k} \to 0$. This contradicts \eqref{eq:absurd}, and therefore we have proved: for every $f \in L^1(\T)$
\begin{align*}
\E{\left|\int_\T \Xi^\epsilon_t(x)  f(x)dx
-\int_\T \Xi_t(x)  f(x)dx \right|} \to 0
\quad \mbox{ as } \epsilon \to 0,
\end{align*}
for every fixed $t\in [0,T]$.
Since $\| \Xi^\epsilon_t \|_{L^\infty(\T)}$ is bounded uniformly in $\epsilon>0$ and $t \in [0,T]$, pointwise converges implies convergence in $L^p([0,T])$ for every finite $p$ by Lebesgue dominated convergence Theorem.

Finally, if $q \in L^1([0,T],Lip(\T))$ and $f \in Lip(\T)$ with $[f]_{Lip(\T)} \leq 1$, we have
\begin{align*}
\hE{ \int_\T \left| f(\phi^\epsilon_t(y)) - f(\phi_t(y)) \right| dy} 
&\leq 
\hE{ \int_\T \left| \phi^\epsilon_t(y) - \phi_t(y) \right| dy}
\\
&\leq 
\sup_{t \in [0,T]}  \hE{ \| \phi_t^\epsilon-\phi_t \|_{L^1(\T,\T)}},
\end{align*}
controlling \eqref{eq:bound1} and \eqref{eq:bound2} uniformly in $f$; also, since $\|f\|_{L^\infty(\T)} \leq 1$ it holds
\begin{align*}
&\hE{\int_0^t \int_\T  |q_s(\phi^\epsilon_s(y))  -  q_s(\phi_s(y)) | |f(\phi_t(y))|dyds }
\\
&\leq
\hE{\int_0^t \int_\T  \|q_s\|_{Lip(\T)} 
|\phi^\epsilon_s(y)-\phi_s(y)| dyds }
\\
&\leq
\int_0^t  \|q_s\|_{Lip(\T)} ds 
\sup_{s \in [0,T]}
\hE{ \|\phi^\epsilon_s-\phi_s\|_{L^1(\T,\T)}},
\end{align*}
allowing to bound \eqref{eq:bound4} in a simpler way. Putting all together, we have proved the desired convergence uniformly in $t \in [0,T]$ and $f \in Lip(\T)$ with $[f]_{Lip(\T)} \leq 1$, $\|f\|_{L^\infty(\T)} \leq 1$. The proof is complete.
\end{proof}

\section{Examples} \label{sec:ex}
In this final section, we discuss how assumptions (A1)-(A7) are fullfilled by our main motivational examples, namely advection-diffusion or Navier-Stokes equations at large scales coupled with stochastic Euler equations at small scales - cfr. \autoref{ssec:ex} for details.

First of all, notice that in the case of passive scalars, like in the advection-diffusion equations, there is nothing to actually prove since all the subjects of assumptions (A1)-(A7) are given \emph{a priori}. 
On the other hand, in the Navier-Stokes system the fields $v^\epsilon$, $v$ are given by $v^\epsilon=K \ast \Xi^\epsilon$, $v=K \ast \Xi$, and therefore (A1), (A2) and (A4) need to be checked. 
The verification of (A4) needs an additional requirement on the external source $q$: assume
\begin{itemize}
\item[(\textbf{A8})]
there exists a constant $C$ such that for almost every $t \in [0,T]$ and almost every $x,y \in \T$
\begin{align*}
|q(t,x)-q(t,y)| \leq C \gamma (|x-y|).
\end{align*}
\end{itemize}

\begin{prop} \label{prop:NS}
Let $\nu \geq 0$, $\Xi_0 \in L^\infty(\T)$ with zero spatial average and consider the Navier-Stokes ($\nu > 0$) or Euler ($\nu=0$) system
\begin{align*}
\begin{cases}
d\Xi^\epsilon_t + (v^\epsilon_t + u^\epsilon_t) \cdot \nabla \Xi^\epsilon_t dt
=
\nu \Delta \Xi^\epsilon_t dt + q^\epsilon_t dt,
\\
d \xi^\epsilon_t + (v^\epsilon_t+u^\epsilon_t) \cdot \nabla \xi^\epsilon_t dt
=
- \epsilon^{-1} \xi^\epsilon_t dt + \epsilon^{-1} \sum_{k \in \N} \varsigma_k dW^k_t,
\\
v^\epsilon_t = -\nabla^\perp(-\Delta)^{-1}\Xi^\epsilon_t,
\\
u^\epsilon_t = -\nabla^\perp(-\Delta)^{-1}\xi^\epsilon_t,
\end{cases}
\end{align*}
and the limiting large-scale dynamics
\begin{align*}
\begin{cases}
d \Xi_t + v_t \cdot \nabla \Xi_t dt
+ \sum_{k \in \N} \sigma_k \cdot \nabla \Xi_t \circ dW^k_t
=
\nu \Delta \Xi_t dt + q_t dt,
\\
v_t=-\nabla^\perp(-\Delta)^{-1}\Xi_t .
\end{cases}
\end{align*}

Assume (A3), (A5)-(A8) and take $q^\epsilon_t$, $q_t$ with zero spatial average for almost every $t \in [0,T]$.
Then the velocity fields $v^\epsilon$, $v$ satisfy (A1), (A2) and (A4).
\end{prop}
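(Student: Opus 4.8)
The passive-scalar case requires no verification, since the objects appearing in (A1)--(A7) are prescribed a priori; so the plan concerns only the active Navier--Stokes coupling, in which $v^\epsilon=K\ast\Xi^\epsilon$, $v=K\ast\Xi$ and $\Xi^\epsilon,\Xi$ are the generalized solutions of \autoref{def:sol} (zero-mean, since $\Xi_0$, $q^\epsilon$, $q$ are). I would first dispatch (A1)--(A2) and then concentrate on (A4), which is the only delicate point.

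For (A1): from $\Xi^\epsilon_t=\tE{\Xi_0\circ(\phi^\epsilon_t)^{-1}+\int_0^t q^\epsilon_s\circ\phi^\epsilon_s\circ(\phi^\epsilon_t)^{-1}\,ds}$, and since $q^\epsilon$ is deterministic while $\phi^\epsilon|_{\Omega\times[0,t]}$ is $\mathcal{F}_t$-progressively measurable, the restriction $\Xi^\epsilon|_{\Omega\times[0,t]}$ is $\mathcal{F}_t\otimes\mathcal{B}_{[0,t]}\otimes\mathcal{B}_\T$-measurable; convolution with $K$ is a bounded linear operator, hence preserves joint measurability, and (A1) for $v^\epsilon$ follows, and likewise for $v$. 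For (A2): $v^\epsilon=-\nabla^\perp(-\Delta)^{-1}\Xi^\epsilon$ is divergence free by construction; since $\phi^\epsilon_t$ is measure preserving, composition with $(\phi^\epsilon_t)^{-1}$ and with $\phi^\epsilon_s\circ(\phi^\epsilon_t)^{-1}$ leaves the $L^\infty$-norm unchanged, so the representation formula together with (A6) gives $\|\Xi^\epsilon_t\|_{L^\infty(\T)}\le\|\Xi_0\|_{L^\infty(\T)}+\int_0^T\|q^\epsilon_s\|_{L^\infty(\T)}\,ds\le\|\Xi_0\|_{L^\infty(\T)}+C$, uniformly in $\epsilon$, $t$, $\omega$, and the same bound for $\Xi$; combined with \eqref{eq:K} this yields the uniform sup-bound on $v^\epsilon$, $v$, while the log-Lipschitz estimate follows from $|v^\epsilon_t(x)-v^\epsilon_t(y)|\le\int_\T|K(x-z)-K(y-z)|\,|\Xi^\epsilon_t(z)|\,dz$ and \autoref{lem:log_lip}.

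The core of the matter is (A4). Since $\Xi_0$ is only $L^\infty$, I would avoid estimating $\|\Xi^\epsilon_t-\Xi_t\|_{L^1(\T)}$ directly, and instead exploit that the Biot--Savart kernel smooths the difference enough that it may be tested against bounded log-Lipschitz functions. Fixing $\omega$ outside the relevant negligible set and $t\in[0,T]$, by $L^1$--$L^\infty$ duality $\|v^\epsilon_t-v_t\|_{L^1(\T,\R^2)}=\sup\{\int_\T(v^\epsilon_t-v_t)\cdot\psi\,dx:\ \psi\in L^\infty(\T,\R^2),\ |\psi|\le 1\ \text{a.e.}\}$; for such $\psi$ set $g(x):=\int_\T K(z-x)\cdot\psi(z)\,dz$, so that $\int_\T(v^\epsilon_t-v_t)\cdot\psi\,dx=\int_\T(\Xi^\epsilon_t-\Xi_t)\,g\,dx$, with $\|g\|_{L^\infty(\T)}\le\|K\|_{L^1}$ by \eqref{eq:K} and $|g(x)-g(x')|\le C\gamma(|x-x'|)$ by \autoref{lem:log_lip}, the constant $C$ independent of $\psi$ and $\omega$. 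Substituting the representation formulas for $\Xi^\epsilon_t$, $\Xi_t$ into $\int_\T(\Xi^\epsilon_t-\Xi_t)g\,dx$ and changing variables along the measure-preserving maps $\phi^\epsilon_t,\phi_t,\phi^\epsilon_s,\phi_s$, I would reach
\[
\int_\T(\Xi^\epsilon_t-\Xi_t)g\,dx=\tE{\int_\T\Xi_0(y)\,\big(g(\phi^\epsilon_t(y))-g(\phi_t(y))\big)\,dy}+\tE{\int_0^t\!\!\int_\T\big(q^\epsilon_s(\phi^\epsilon_s(y))g(\phi^\epsilon_t(y))-q_s(\phi_s(y))g(\phi_t(y))\big)\,dy\,ds}.
\]
By the log-Lipschitz modulus of $g$ and Jensen's inequality (concavity of $\gamma$, applied against normalized Lebesgue measure in $y$ and against $\tilde{\PP}$ in $\tilde\omega$), together with $\gamma$ increasing and $\gamma(\lambda r)\le\lambda\gamma(r)$ for $\lambda\ge1$, the first term is at most $C\|\Xi_0\|_{L^\infty(\T)}\,\gamma\big(\tE{\|\phi^\epsilon_t-\phi_t\|_{L^1(\T,\T)}}\big)$. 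In the second term I would decompose $q^\epsilon_s(\phi^\epsilon_s(y))g(\phi^\epsilon_t(y))-q_s(\phi_s(y))g(\phi_t(y))$ as $(q^\epsilon_s-q_s)(\phi^\epsilon_s(y))\,g(\phi^\epsilon_t(y))+q_s(\phi^\epsilon_s(y))\big(g(\phi^\epsilon_t(y))-g(\phi_t(y))\big)+\big(q_s(\phi^\epsilon_s(y))-q_s(\phi_s(y))\big)g(\phi_t(y))$: the first piece contributes at most $C\int_0^T\|q^\epsilon_s-q_s\|_{L^\infty(\T)}\,ds=:c_\epsilon\to0$ by (A7); the second, again through the modulus of $g$, (A6) and Jensen, contributes $\lesssim\gamma\big(\tE{\|\phi^\epsilon_t-\phi_t\|_{L^1(\T,\T)}}\big)$; the third, using assumption (A8), $\|g\|_{L^\infty(\T)}\le\|K\|_{L^1}$ and Jensen, contributes $\lesssim\int_0^t\gamma\big(\tE{\|\phi^\epsilon_s-\phi_s\|_{L^1(\T,\T)}}\big)\,ds$. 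Since none of these constants depend on $\psi$, taking the supremum over $\psi$ gives exactly (A4).

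The main obstacle is precisely the one I have just bypassed: because $\Xi_0\in L^\infty$ only, one cannot control $\|\Xi^\epsilon_t-\Xi_t\|_{L^1}$ by the flow discrepancy, so the regularising effect of $K$ must genuinely be used; and it is essential that this smoothing produce the sharp modulus $\gamma$ rather than a weaker Hölder one, since only a $\gamma$-type nonlinearity closes the Gronwall estimate of \autoref{thm:char} via \autoref{lem:comp}. Everything else is routine bookkeeping: the repeated changes of variables along measure-preserving maps, Jensen's inequality against normalized Lebesgue measure (where the harmless factor $(2\pi)^2$ appears and is absorbed using $\gamma(\lambda r)\le\lambda\gamma(r)$ for $\lambda\ge1$), and the uniform bounds on $\|q^\epsilon\|,\|q\|$ from (A6) and on $\|K\|_{L^1}$.
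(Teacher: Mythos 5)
Your proof is correct and follows essentially the same route as the paper: substitute the representation formulas for $\Xi^\epsilon_t$, $\Xi_t$, change variables along the measure-preserving flows, and split the resulting difference into a term controlled by the log-Lipschitz modulus of $K$ (\autoref{lem:log_lip}) applied to the flow discrepancy, a term controlled by (A7), and a term controlled by (A8), closing with Jensen and the concavity of $\gamma$. The only cosmetic difference is that you phrase the $L^1$ estimate on $v^\epsilon_t-v_t$ by duality against a test field $\psi$ (transferring the kernel onto $g=K\ast\psi$), whereas the paper integrates $|v^\epsilon_t(x)-v_t(x)|$ directly and applies \autoref{lem:log_lip} after Fubini; the underlying estimates are identical.
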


\begin{proof}
Concerning (A1), measurability can be deduced by $v^\epsilon=K \ast \Xi^\epsilon$, $v=K \ast \Xi$, representation formulas \eqref{eq:repr_large_eps} and \eqref{eq:repr_large}, and the fact that $\phi^\epsilon$, $\phi$ are stochastic flows of measure-preserving homeomorphisms. 
Assumption (A2) is given by $v^\epsilon=K \ast \Xi^\epsilon$, $v=K \ast \Xi$, \eqref{eq:K} and \autoref{lem:log_lip}. 

Finally, let us then verify (A4). Recall
\begin{align*}
v^\epsilon_t(x)
&=
\int_\T K(x-y) \Xi^\epsilon_t(y) dy
\\
&=
\int_\T K(x-y)
\tE{ \Xi_0((\phi^\epsilon_t)^{-1}(y)) + \int_0^t q^\epsilon_s(\phi^\epsilon_s((\phi^\epsilon_t)^{-1}(y)))ds} dy
\\
&=
\tE{ \int_\T K(x-\phi^\epsilon_t(y))
\Xi_0(y) dy} 
+ 
\tE{ \int_\T K(x-\phi^\epsilon_t(y))
\int_0^t q^\epsilon_s(\phi^\epsilon_s(y))ds dy} ,
\end{align*}
and
\begin{align*}
v_t(x)
&=
\int_\T K(x-y) \Xi_t(y) dy
\\
&=
\int_\T K(x-y)
\tE{ \Xi_0((\phi_t)^{-1}(y)) + \int_0^t q_s(\phi_s((\phi_t)^{-1}(y)))ds} dy
\\
&=
\tE{ \int_\T K(x-\phi_t(y))
\Xi_0(y) dy} 
+ 
\tE{ \int_\T K(x-\phi_t(y))
\int_0^t q_s(\phi_s(y))ds dy} .
\end{align*}

We have
{\small
\begin{align*}
\int_\T |v^\epsilon_t(x)-v_t(x)| dx
&\leq
\tE{
\int_\T 
\int_\T 
\left| K(x-\phi^\epsilon_t(y)) -  K(x-\phi_t(y)) \right|
|\Xi_0(y)| dy dx
}
\\
&\hspace{-3cm}\quad+
\tE{
\int_\T \left|
\int_\T K(x-\phi^\epsilon_t(y))
\int_0^t q^\epsilon_s(\phi^\epsilon_s(y))ds dy
-
\int_\T K(x-\phi_t(y))
\int_0^t q_s(\phi_s(y))ds dy
\right| dx }
\\
&\leq
\tE{
\int_\T 
\int_\T 
\left| K(x-\phi^\epsilon_t(y)) -  K(x-\phi_t(y)) \right|
|\Xi_0(y)| dy dx
}
\\
&\quad+
\tE{
\int_\T 
\int_\T \left|
K(x-\phi^\epsilon_t(y))-K(x-\phi_t(y)) \right|
\left| \int_0^t q^\epsilon_s(\phi^\epsilon_s(y))ds \right| dy dx
}
\\
&\quad+
\tE{
\int_\T 
\int_\T \left|
K(x-\phi_t(y)) \right|
 \int_0^t \left|q^\epsilon_s(\phi^\epsilon_s(y))-
q_s(\phi^\epsilon_s(y))ds \right| dy dx
}
\\
&\quad+
\tE{
\int_\T 
\int_\T \left|
K(x-\phi_t(y)) \right|
 \int_0^t \left|q_s(\phi^\epsilon_s(y))-
q_s(\phi_s(y))ds \right| dy dx
}
\\
&\lesssim
\gamma \left( \tE{\|\phi^\epsilon_t-\phi_t \|_{L^1(\T,\T)}}\right)
+
\int_0^t \|q^\epsilon_s - q_s \|_{L^\infty(\T)} ds
\\
&\quad+
\int_0^t \gamma \left( \tE{\|\phi^\epsilon_s-\phi_s \|_{L^1(\T,\T)}}\right) ds,
\end{align*}}
that is the desired estimate, since $\int_0^t \|q^\epsilon_s - q_s \|_{L^\infty(\T)} ds \to 0$ as $\epsilon \to 0$ by assumption (A7).
\end{proof}

\bibliographystyle{plain}

\end{document}